\newtheorem{theorem}{Theorem}[section]
\newtheorem{lemma}[theorem]{Lemma}
\newtheorem{corollary}[theorem]{Corollary}
\newtheorem{proposition}[theorem]{Proposition}
\newtheorem{conjecture}[theorem]{Conjecture}
    \theoremstyle{definition}
\newtheorem{definition}[theorem]{Definition}
\newtheorem{example}[theorem]{Examples}
\newtheorem{remark}[theorem]{{\it Remark}}
\numberwithin{equation}{section}
\begin{document}

\title[]{Generalizations  of Carmichael  numbers I}

\author{Romeo Me\v strovi\' c}

\address{Maritime Faculty, University of Montenegro, Dobrota 36,
 85330 Kotor, Montenegro} \email{romeo@ac.me}

{\renewcommand{\thefootnote}{}\footnote{2010 {\it Mathematics Subject 
Classification.} Primary 05A19;  Secondary  11A51, 05A10, 11A07, 11A15, 
11A25, 11A41, 11A05, 11B50.

{\it Keywords and phrases.} 
Carmichael number, Lehmer number, Giuga's  conjecture,  
weak Carmichael number, Carmichael function $\lambda(n)$, 
super Carmichael number,  $k$-Lehmer number,  Fermat primality test.}

\setcounter{footnote}{0}}

\dedicatory{Dedicated  to the $100$th anniversary of birth of Paul Erd\H{o}s}


\vspace*{7mm}

   \begin{abstract} 
A composite positive integer $n$ is said to be a 
{\it weak Carmichael number} if 
  $$
 \sum_{\gcd(k,n)=1\atop 1\le k\le n-1}k^{n-1}\equiv \varphi(n) \pmod{n}.
\leqno(1)
    $$
It is proved that a composite positive
integer $n$ is a weak Carmichael number 
if and only if $p-1\mid n-1$ for every prime divisor $p$ of $n$. 
This together with  Korselt's criterion yields
the fact that every Carmichael number is also a weak Carmichael number.

In this paper we mainly investigate arithmetic properties of 
weak Carmi-\
chael numbers. Motivated by the investigations of Carmichael numbers
in the last hundred years, here 
we establish several related results, notions, 
examples and computatinoal searches for weak Carmichael numbers and numbers closely 
related to weak Carmichael numbers. 
Furthermore, using the software {\tt Mathematica 8}, we present the table 
containing all non-prime powers weak Carmichael numbers less than $2\times 10^6$.

Motivated by heuristic arguments, our computations and some old  conjectures 
and results for Carmichael numbers, we propose several conjectures for weak Carmichael 
numbers and for some other classes of Carmichael like numbers. 

Finally, we consider weak Carmichael numbers in light of Fermat 
primality test. We believe  that it can be of interest 
to involve certain particular classes of weak Carmichael numbers in some 
problems concerning Fermat-like primality tests and the generalized Riemann
hypothesis.
  \end{abstract}

\maketitle

\section{Carmichael numbers, Lehmer numbers and Giuga numbers}

\subsection{Lehmer numbers, Carmichael numbers and the main result}

{\it Lehmer's totient problem} asks about the existence of a composite 
number such that $\varphi(n)\mid (n-1)$ \cite{leh}, where 
$\varphi(n)$ is the  {\it Euler totient function}
defined as the number of positive integers less than $n$ which are relatively 
prime to $n$.
 These numbers are sometimes 
reffered to  as {\it Lehmer numbers}. In 1932 D.H.  Lehmer \cite{leh}
showed that every Lehmer number $n$ must be odd and square-free, and 
that the number of distinct prime factors of $n$ must be greater than 6.
However, no Lehmer numbers are known up to date, and computations by Pinch 
\cite{pi} show that any examples must be greater than $10^{30}$.  
In 1977 Pomerance \cite{pom} showed that the number of Lehmer numbers $n\le x$
is $O(x^{1/2}(\log x)^{3/4})$. In 2011 this bound is improved by 
Luca and Pomerance \cite{lp} to $O(x^{1/2}(\log x)^{1/2+o(1)})$.

Carmichael numbers are quite famous among specialists in number theory,
as they are quite rare and very hard to test.
{\it Fermat little theorem} says that if $p$ is a prime and the integer $a$ is 
not a multiple of $p$, then $a^{p-1}\equiv 1(\bmod{\,p})$. 
However, there are positive integers $n$ that are composite but still satisfy 
the congruence  $a^{n-1}\equiv 1(\bmod{\,n})$ for all $a$ coprime to $n$.
Such ``false primes" are called {\it Carmichael numbers} in honour 
of R.D. Carmichael, who demonstrated their existence in 1912 \cite{car2}.
A Carmichael number $n$ is a composite integer that is
a base-$a$ {\it Fermat-pseudoprime} for all $a$ with $\gcd (a,n)=1$. 
These numbers present a major problem for Fermat-like primality tests.
In \cite{gr3} A. Granville wrote: 
``{\it Carmichael numbers are nuisance, masquerading as primes like this, 
though computationally they only appear rarely. Unfortunately it 
was recently proved that there are infinitely many 
of them and that when we go out far enough they are 
not so rare as it first appears.}"

It is easy to see that every Carmichael number is odd, namely, if 
$n\ge 4$ is even, then $(n-1)^{n-1}\equiv (-1)^{n-1}=-1\not\equiv -1
(\bmod{n})$. 
In 1899 A. Korselt \cite{kor} gave a complete
characterization of  Carmichael numbers which is often rely on 
the following equivalent definition.
 \begin{definition}[Korselt's criterion, 1899]
A composite odd positive integer $n$ is a Carmichael number if $n$ 
is  squarefree, and $p-1\mid n-1$ for every prime $p$ dividing $n$.
   \end{definition}
Korselt did not find any Carmichael numbers, however.
The smallest Carmi-\
chael number, $561(=3\times 11\times 17)$, was found 
by Carmichael in 1910 \cite{car1}. Carmichael also gave 
a new characterization of these numbers as those composite 
$n$ which satisfy $\lambda(n)\mid n-1$,
where $\lambda(n)$, {\it Carmichael lambda function},
denotes the size of the largest cyclic subgroup of 
the group $\left(\Bbb Z/n\Bbb Z\right)^{*}$ of all reduced residues modulo $n$. 
In other words, $\lambda(n)$ is the smallest 
positive integer $m$ such that $a^m\equiv 1(\bmod{\,n})$ for all
for all $a$ coprime to $n$ (Sloane's sequence A002322).
Since $\lambda(n)\mid \varphi(n)$ for every positive integer $n$, 
every Lehmer number would also be a Carmichael number.
Recall that various upper bound and lower bounds for $\lambda(n)$
have been obtained in \cite{eps}.
It is easily deduced from Korselt's criterion that every 
Carmichael number is a product of at least three distinct primes
(see e.g., \cite{gp}).  It was unsolved problem for many  years whether there 
are infinitely many Carmichael numbers. The question was resolved in 1994
by  Alford, Granville and Pomerance \cite{agp} who  proved, not only 
that the answer is yes, but that 
there are more than $x^{2/7}$ Carmichael numbers up to $x$, once $x$ is 
sufficiently large.
In 2005 G. Harman \cite{har} has improved the 
constant $2/7$ to $0.33$ (for a more general result see 
\cite[Theorem 1.2]{har2}). 
However, there are a very wide gap between these 
estimates and the known upper bounds for $C(x)$. Related upper bounds and the 
counting function for the Carmichael numbers were studied in 1956 by
P. Erd\H{o}s \cite{er1}, in 1980 by C. Pomerance, J.L. Selfridge and 
Samuel S. Wagstaff \cite{psw} and in 1989 by C. Pomerance \cite{pom3}.
In the same paper Erd\H{o}s proposed a popular method for the 
construction of Carmichael numbers (cf. \cite{zh} and for a 
recent application of this construction see \cite{gp} and \cite{ln}).
Some other algorihms for constructing Carmichael numbers 
can be found in \cite{aghs} and \cite{ln} where are constructed 
Carmichael numbers with millions of  components. Recall also 
that in 1939, Chernick \cite{ch} gave a simple method to obtain
Carmichael numbers with three prime factors considering the products 
of the form $(6m+1)(12m+1)(18m+1)$ with $m\ge 1$.  
Notice also that the number of Carmichael numbers less than $10^n$ is given 
in \cite{sl} as the Sloane's sequence A055553.

Quite recently, T. Wright \cite{wr}  proved that for 
every pair of coprime positive integers $a$ and $d$ there are  
infinitely many Carmichael numbers $m$ such that $m\equiv a (\bmod{\,d})$.
 \begin{remark}
Quite recently, J.M. Grau and A.M. Oller-Marc\'en 
 \cite[Definition 1]{gm2} weakened Lehmer property by introducing
the concept of $k$-Lehmer numbers.  For given  positive integer $k$, a  
$k$-{\it Lehmer number} is a composite integer $n$ such that 
$\varphi(n)\mid (n-1)^k$. It is easy to see that every $k$-{\it Lehmer number}
must be square-free. 
Hence, if we denote by $L_k$ the set that each 
$k$-{\it Lehmer number}
  $$
L_k:=\{n\in\Bbb N:\, \varphi(n)\mid (n-1)^k\},
  $$
then $k$-Lehmer numbers are the composite elements of $L_k$.
Then  $L_k\subseteq L_{k+1}$ for each $k\in \Bbb N$, and define
  $$
L_{\infty}:=\bigcup_{k=1}^{\infty}L_k.
  $$
Then it can be easily shown  that \cite[Proposition 3]{gm2}
   $$
L_{\infty}:=\{n\in\Bbb N:\,{\rm rad}(\varphi(n))\mid (n-1)\}.
  $$
This immediately  shows that \cite[Proposition 6]{gm2}
if $n$ is a Carmichael number, then $n$ also belongs to the set 
$L_{\infty}$. This leads  to the following characterization of 
Carmichael numbers which slightly modifies Korselt's criterion.
 \end{remark}
 \begin{proposition}{\rm(}\cite[Proposition 6]{gm2}{\rm)}
A composite number $n$ is a Carmichael number if and only if 
${\rm rad}(\varphi(n))\mid n-1$ and $p-1\mid n-1$ for every prime 
divisor $p$ of $n$.
 \end{proposition}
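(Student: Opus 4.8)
The plan is to derive both implications from Korselt's criterion (the Definition above), the only extra ingredient being an explicit description of the primes dividing $\varphi(n)$. First I would write $n=\prod_{i=1}^r p_i^{a_i}$ with the $p_i$ distinct primes, so that $\varphi(n)=\prod_{i=1}^r p_i^{a_i-1}(p_i-1)$; thus a prime $q$ divides $\varphi(n)$ precisely when either $q=p_i$ for some $i$ with $a_i\ge 2$, or $q\mid p_i-1$ for some $i$. This observation is what links the condition ${\rm rad}(\varphi(n))\mid n-1$ to the square-free and Korselt conditions.

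For the forward direction I would suppose $n$ is a Carmichael number, so that by Korselt's criterion $n$ is square-free and $p-1\mid n-1$ for every prime $p\mid n$; the latter is already the second asserted condition. Since $n$ is square-free, every prime $q\mid\varphi(n)$ divides some $p_i-1$ and hence $q\mid p_i-1\mid n-1$. Because ${\rm rad}(\varphi(n))$ is a product of distinct primes, each dividing $n-1$, it is itself square-free and divides $n-1$, which gives ${\rm rad}(\varphi(n))\mid n-1$.

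For the converse I would assume both stated conditions and verify the hypotheses of Korselt's criterion, namely that $n$ is odd and square-free (the divisibility $p-1\mid n-1$ being assumed). To obtain oddness I note that a composite $n$ satisfies $n\ge 4$, so $\varphi(n)$ is even, whence $2\mid{\rm rad}(\varphi(n))\mid n-1$ and $n$ must be odd. For square-freeness I would argue by contradiction: if $p^2\mid n$ for some prime $p$, then $p\mid\varphi(n)$ by the description above, so $p\mid{\rm rad}(\varphi(n))\mid n-1$, and together with $p\mid n$ this forces $p\mid n-(n-1)=1$, a contradiction. Korselt's criterion then yields that $n$ is a Carmichael number.

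The whole argument is routine once Korselt's criterion is available, so I do not anticipate a serious obstacle. The one step deserving care is the square-freeness part of the converse: the real content of the hypothesis ${\rm rad}(\varphi(n))\mid n-1$ is precisely that it forbids a repeated prime factor of $n$, since such a factor would appear in $\varphi(n)$ and thereby be forced to divide both $n$ and $n-1$.
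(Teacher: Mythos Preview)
Your argument is correct. The forward direction and the converse are both handled cleanly, and the key observation---that a repeated prime factor of $n$ would divide $\varphi(n)$ and hence ${\rm rad}(\varphi(n))$, contradicting coprimality with $n-1$---is exactly the point of the hypothesis.

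As for comparison with the paper: the paper does not actually supply a proof of this proposition. It is stated (twice, as Proposition~1.3 and again as Proposition~2.57) as a result quoted from Grau and Oller-Marc\'en \cite{gm2}. The surrounding discussion in Remark~1.2 and Subsection~2.4 indicates only the easy direction, namely that every Carmichael number lies in $L_\infty=\{n:\,{\rm rad}(\varphi(n))\mid n-1\}$, and leaves the converse implicit. Your write-up therefore fills in what the paper omits, and the route you take---reducing both directions to Korselt's criterion via the explicit prime factorization of $\varphi(n)$---is the natural one and almost certainly the argument intended in \cite{gm2}. The oddness step you include is not strictly necessary (square-freeness together with $p-1\mid n-1$ for all $p\mid n$ already forces $n$ odd when $n$ is composite), but it does no harm.
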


 Obviously, the composite elements of $L_1$ are precisely the Lehmer numbers
and the Lehmer property asks whether $L_1$ contains composite numbers
or not. Nevertheless, for all  $k>1$, $L_k$ always contains composite 
elements (cf. Sloane's sequence A173703 in OEIS \cite{sl} 
which presents $L_2$).  
For further radically weaking the Lehmer and Carmichael conditions 
see \cite{mcn}.
 \begin{remark} Carmichael numbers can be generalized using concepts 
of abstract algebra. Namely, in 2000 Everet W. Howe \cite{ho} defined a Carmichael number 
of order $m$ to be a composite integer $n$ such that $n$th power raising 
defines an endomorphism of every $\Bbb Z/n\Bbb Z$-algebra that can be 
generated  as a $\Bbb Z/n\Bbb Z$-module by $m$ elements. The author gave
a simple criterion to determine whether a number is a Carmichael number of
order $m$. In 2008 G.A. Steele \cite{ste} generalized Carmichael numbers
to ideals in number rings and proved a generalization of Korselt's 
criterion for these Carmichael ideals. 
  \end{remark}
Here, as always in the sequel, $\gcd(k,n)$
denotes the {\it greatest common divisor} of $k$ and $n$, and
$\sum_{\gcd(k,n)=1\atop k\in {\mathcal P}}\cdot$ 
denotes the sum ranging over all integers $k$ satisfying 
the prperty ${\mathcal P}$ and the condition $\gcd(k,n)=1$. 

Studying 
some variations on the ``theme of Giuga", in 1995 J.M. Borwein and E. 
Wong \cite{bw} established the following result.

 \begin{theorem}{\rm(\cite[Corollary 8]{bw})}
 A positive integer $n\ge 2$ satisfies the congruence
 \begin{equation}      
 \sum_{\gcd(k,n)=1\atop 1\le k\le n-1}k^{n-1}\equiv 
\varphi(n) \pmod{n}
 \end{equation}
if and only if $p-1\mid n-1$ for every prime divisor $p$ of $n$. 
 \end{theorem}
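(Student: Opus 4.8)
The plan is to split the single congruence modulo $n$ into congruences modulo the prime-power components of $n$ by the Chinese Remainder Theorem, and then to evaluate the power sum over the units of each $\left(\Bbb Z/p^{a}\Bbb Z\right)^{*}$ by means of a primitive root. Write $n=\prod_{i}p_{i}^{a_{i}}$ and put $S:=\sum_{\gcd(k,n)=1,\,1\le k\le n-1}k^{\,n-1}$. Fix a component $p^{a}\,\|\,n$ and write $n=p^{a}M$ with $\gcd(p,M)=1$. Since $k^{\,n-1}\bmod p^{a}$ depends only on $k\bmod p^{a}$, and since each reduced residue modulo $p^{a}$ is the reduction of exactly $\varphi(M)$ of the units modulo $n$, I get
\[
S\equiv \varphi(M)\,\Sigma_{p^{a}}\pmod{p^{a}},\qquad
\Sigma_{p^{a}}:=\sum_{u\in\left(\Bbb Z/p^{a}\Bbb Z\right)^{*}}u^{\,n-1}.
\]
As $\varphi(n)=\varphi(p^{a})\varphi(M)$, the desired congruence modulo $p^{a}$ is equivalent to $\varphi(M)\bigl(\Sigma_{p^{a}}-\varphi(p^{a})\bigr)\equiv 0\pmod{p^{a}}$, and by CRT the full statement holds if and only if this holds for every $p^{a}\,\|\,n$.

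The core is the evaluation, for odd $p$ and $m=n-1$,
\[
\Sigma_{p^{a}}\equiv
\begin{cases}
\varphi(p^{a})\pmod{p^{a}}, & (p-1)\mid m,\\
0\pmod{p^{a}}, & (p-1)\nmid m.
\end{cases}
\]
To prove it I fix a primitive root $g$ modulo $p^{a}$ and write $\Sigma_{p^{a}}=\sum_{j=0}^{\varphi(p^{a})-1}g^{jm}$. When $(p-1)\nmid m$ the reduction of $g$ modulo $p$ is a primitive root, so $g^{m}\not\equiv 1\pmod p$, hence $g^{m}-1$ is a unit modulo $p^{a}$ and the geometric sum telescopes to $\bigl(g^{\varphi(p^{a})m}-1\bigr)/(g^{m}-1)\equiv 0$. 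When $(p-1)\mid m$ I use $\left(\Bbb Z/p^{a}\Bbb Z\right)^{*}\cong C_{p-1}\times C_{p^{a-1}}$ to factor $\Sigma_{p^{a}}$ as a sum over the $(p-1)$-part times a sum over the $p$-part; the first factor equals $p-1$ exactly, since $(p-1)\mid m$ makes every $m$-th power in the $(p-1)$-part equal to $1$, and a short computation with the parametrization $1+p^{a-c}t$ of the relevant subgroup of principal units shows the second factor is $\equiv p^{a-1}\pmod{p^{a}}$, so $\Sigma_{p^{a}}\equiv(p-1)p^{a-1}=\varphi(p^{a})$. I expect the prime $p=2$ to be the delicate point: $\left(\Bbb Z/2^{a}\Bbb Z\right)^{*}$ is not cyclic for $a\ge 3$, and for $a\ge 2$ the element $-1$ forces $\Sigma_{2^{a}}\equiv 0$ as soon as $n-1$ is odd, so the even values of $n$ must be treated directly; the even $n$ with no bad prime, namely the powers of two, are handled by a separate check by hand.

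With this Lemma both implications follow, and I expect the reverse implication to be the main obstacle. For the forward (``if'') direction, if $(p-1)\mid(n-1)$ for every prime $p\mid n$ then $\Sigma_{p^{a}}\equiv\varphi(p^{a})$ for each component, the bracketed quantity vanishes, and the congruence holds modulo every $p^{a}$, hence modulo $n$. For the converse, call a prime $p\mid n$ \emph{bad} if $(p-1)\nmid(n-1)$, and suppose the congruence holds. For a bad $p$ the Lemma gives $\Sigma_{p^{a}}\equiv 0$, so the congruence modulo $p^{a}$ forces $\varphi(M)\varphi(p^{a})\equiv 0\pmod{p^{a}}$; since $\varphi(p^{a})\equiv -p^{a-1}$ this means $p\mid\varphi(n/p^{a})$, and as $p\nmid n/p^{a}$ there must be a prime $r\mid n$ with $r\equiv 1\pmod p$, in particular $r>p$. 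Applying this to the largest bad prime $P$ produces a prime $r\mid n$ with $r>P$; by maximality $r$ is not bad, so $(r-1)\mid(n-1)$, and together with $P\mid(r-1)$ this yields $P\mid(n-1)$, whence $P\mid\gcd(n,n-1)=1$ --- a contradiction. Thus $n$ has no bad prime, which for odd $n$ is exactly the asserted criterion, the powers of two being disposed of by the direct check noted above.
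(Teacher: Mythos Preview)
Your proof is correct and shares the paper's overall architecture: the Chinese Remainder reduction to prime-power components via the counting argument (the paper's Lemmas~4.2 and~4.4), the primitive-root/geometric-sum evaluation of $\Sigma_{p^{a}}$ when $(p-1)\nmid m$ (the paper's Lemma~4.1), and the identical maximal-bad-prime contradiction for the converse. The one substantive difference lies in the case $(p-1)\mid m$. The paper first reduces the exponent via $k^{n-1}\equiv k^{p^{e-1}-1}\pmod{p^{e}}$ (Lemma~4.3) and then invokes the Carlitz--von Staudt congruence (Lemma~4.8) to obtain $\sum_{k=1}^{p^{e}-1}k^{p^{e-1}-1}\equiv -p^{e-1}\pmod{p^{e}}$. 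You instead factor $\Sigma_{p^{a}}$ through the decomposition $(\Bbb Z/p^{a}\Bbb Z)^{*}\cong C_{p-1}\times C_{p^{a-1}}$ and compute each piece directly, which is more self-contained and avoids the appeal to an external power-sum result; the paper's route, by contrast, ties the theorem to the classical von Staudt--Clausen circle of ideas. One small caveat on your treatment of $p=2$: for $n=2^{a}$ with $a\ge 2$ the divisibility criterion is vacuously satisfied while the congruence fails, so the equivalence is literally false there; the paper sidesteps this by restricting Theorem~2.4 to odd $n$ (Proposition~2.3 having already excluded even weak Carmichael numbers), and your ``check by hand'' should be understood in the same spirit.
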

\begin{remark} Theorem 1.5 was proved in \cite{bw} as a particular case of  
Theorem 11 in \cite{bw}. In the proof of this theorem 
the authors deal with  congruences for the sum (1.1)  modulo prime 
powers dividing $n$. In particular, in this proof it was used the 
{\it Chinese remainder theorem} to factor the sum  (1.1) modulo $n$ into 
product of $s$ similar ``restricted sums", where $s$ is a number of distinct 
prime factors of $n$. In Section 4 we give another proof of Theorem 1.5 
(this  is in fact proof of Theorem 2.4). 
Our proof is  based on some congruential properties 
of sums of powers  $\sum_{\gcd(k,n)=1\atop 1\le k\le n-1}k^{n-1}$
(Lemmas 4.1--4.7) and Carlitz-von Staudt's result \cite{ca} for determining 
$S_{2k}(m)(\bmod{\,m})$ (Lemma 4.8). 
   \end{remark}

A direct consequence of Theorem 1.5 is the following 
simple characterization of Carmichael numbers. 
 \begin{corollary} {\rm(Corollary 2.8).}  
A composite positive integer $n$ is a Carmichael number if 
and only if the following conditions are satisfied.

$\,\, (i)$ $n$ is square-free and 

$(ii)$ $\displaystyle\sum_{\gcd(k,n)=1\atop 1\le k\le n-1}k^{n-1}\equiv 
\varphi(n) \pmod{n}$.
  \end{corollary}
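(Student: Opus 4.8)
The plan is to combine Theorem 1.5 with Korselt's criterion (Definition 1.1); the only genuine work is a short parity argument needed before the latter can be invoked.

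First, for the forward implication, I would assume that $n$ is a Carmichael number. By Korselt's criterion $n$ is then an odd composite squarefree integer with $p-1\mid n-1$ for every prime $p\mid n$, which gives condition $(i)$ at once. To obtain $(ii)$, I would feed the divisibility $p-1\mid n-1$, valid for all prime divisors $p$ of $n$, into Theorem 1.5, whose stated equivalence returns precisely the congruence $\sum_{\gcd(k,n)=1,\,1\le k\le n-1}k^{n-1}\equiv\varphi(n)\pmod n$.

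For the converse, I would assume $(i)$ and $(ii)$ and verify the hypotheses of Korselt's criterion. Applying the other direction of Theorem 1.5 to $(ii)$ yields $p-1\mid n-1$ for every prime divisor $p$ of $n$. Together with the squarefreeness from $(i)$ and the compositeness of $n$ (a hypothesis of the statement), the only remaining requirement of Korselt's criterion is that $n$ be odd. This is the step that needs a moment's care: the divisibility conditions do not by themselves exclude $p=2$, since $2-1=1$ divides everything. The argument I would use is that if $n$ were even, then, being squarefree and composite, it would have to possess an odd prime factor $p$; but then $p-1$ is even while $n-1$ is odd, contradicting $p-1\mid n-1$. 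Hence $n$ is odd, Korselt's criterion applies, and $n$ is a Carmichael number.

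The main obstacle, such as it is, is exactly this parity check: one must notice that Korselt's criterion as stated presupposes an odd modulus, that oddness is not literally part of $(i)$ and $(ii)$, and that it is nonetheless forced by the combination of squarefreeness, compositeness, and the divisibility condition extracted from Theorem 1.5. Everything else is a direct citation of the two results already established.
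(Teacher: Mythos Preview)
Your proof is correct and follows essentially the same approach as the paper: both directions are obtained by combining Theorem~1.5 (equivalently Theorem~2.4) with Korselt's criterion, and the paper treats the corollary as an immediate consequence of these two ingredients. The one small variation is your parity step: the paper would cite Proposition~2.3 (every weak Carmichael number is odd) to dispose of even $n$ directly from condition~(ii), whereas you instead extract the divisibility $p-1\mid n-1$ first and then derive oddness from it together with squarefreeness---both routes are perfectly valid and equally short.
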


In this paper we mainly investigate arithmetic properties of 
composite positive integers satisfying the congruence (1.1).
Such numbers are called weak Carmichael numbers.
Motivated by the investigations of Carmichael numbers
in the last hundred years, here 
we establish several related results, notions, 
examples and computatioal searches for weak Carmichael numbers and 
numbers closely related to weak Carmichael numbers. 

  \subsection{Bernoulli's  formula for the sum of powers
 and von Staudt-Clau-\
sen's theorem} 
The sum of powers of integers $\sum_{i=1}^ni^k$
 is a well-studied problem in mathematics (see e.g., \cite{bo}, \cite{sc}).
Finding formulas for these sums has interested mathematicians 
for more than 300 years since the time of James Bernoulli (1665-1705).
These lead to numerous recurrence relations.
The first such well known recurrence relation was established by 
B. Pascal \cite{pas}.
A related new reccurrence relation is 
quite recently established in \cite[Corollary 1.9]{me2}.
For a nice account of  sums of powers see \cite{ed3}.
For simplicity, here 
as often  in the sequel, for all integers $k\ge 1$ and $n\ge 2$ we denote
  $$
S_k(n):=\sum_{i=1}^{n-1}i^{k}=1^{k}+2^{k}+3^{k}+\cdots+(n-1)^{k}.
   $$ 
The study of these sums led Jakob Bernoulli \cite{ber}
to develop numbers later named in his honor. Namely, the celebrated
{\it Bernoulli's  formula} (sometimes called {\it Faulhaber's formula})
(\cite{fau} and \cite{kn}) gives the sum $S_k(n)$ explicitly as 
(see e.g., \cite{gkp} or  \cite{bea})
    \begin{equation}
S_k(n)= \frac{1}{k+1} \sum_{i=0}^{k}{k+1\choose i} n^{k+1-i}B_{i}
    \end{equation}
where $B_i$ ($i=0,1,2,\ldots$)
are {\it Bernoulli numbers} defined by the generating function
   $$
\sum_{i=0}^{\infty}B_i\frac{x^i}{i!}=\frac{x}{e^x-1}.
  $$
It is easy to find the values $B_0=1$, $B_1=-\frac{1}{2}$, 
$B_2=\frac{1}{6}$, $B_4=-\frac{1}{30}$, and $B_i=0$ for odd $i\ge 3$. 
Furthermore, $(-1)^{i-1}B_{2i}>0$ for all $i\ge 1$. 
Recall that several identities involving Bernoulli numbers and 
Bernoulli polynomials can be found in \cite{ps} and \cite{sp}.

 The {\it von Staudt-Clausen's theorem} is a result determining the fractional
part of Bernoulli numbers, found in 1840  independently by K. von Staudt
(\cite {st}; see also \cite[Theorem 118]{hw}) and T. Clausen \cite{cl}).
   \begin{theorem}{\rm (von Staudt-Clausen's theorem).}
The denominator of  Bernoulli number $B_{2n}$ with $n=1,2,\ldots$ 
is the product of all primes $p$ such that $p-1$ divides $2n$.
 \end{theorem}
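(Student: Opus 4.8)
The plan is to reduce the theorem to a statement about one prime at a time and then settle that statement by induction, using Bernoulli's formula (1.2) together with the classical evaluation of power sums modulo a prime. Writing $m=2n$ and $\operatorname{ord}_p$ for the $p$-adic valuation, I would prove the sharper claim that for every even $m\ge 2$ and every prime $p$ the quantity $pB_m$ is $p$-integral (its denominator is prime to $p$) and satisfies
$$
pB_m\equiv -1 \pmod p \ \text{ if } (p-1)\mid m, \qquad pB_m\equiv 0 \pmod p \ \text{ if } (p-1)\nmid m.
$$
Granting this, the rational number $B_m+\sum_{(p-1)\mid m}1/p$ is $q$-integral for every prime $q$ and is therefore an integer, while $\operatorname{ord}_p(B_m)=-1$ for exactly the primes with $(p-1)\mid m$; thus the denominator of $B_m$ is precisely $\prod_{(p-1)\mid m}p$, which is the assertion of the theorem.

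First I would record the power-sum lemma: for a prime $p$ and $m\ge 1$,
$$
S_m(p)=\sum_{i=1}^{p-1}i^m\equiv -1 \pmod p \ \text{ when } (p-1)\mid m, \qquad S_m(p)\equiv 0 \pmod p \ \text{ otherwise}.
$$
This is immediate from the existence of a primitive root $g$ modulo $p$: the sum equals $\sum_{j=0}^{p-2}g^{jm}$, which is $p-1\equiv-1$ when $g^m\equiv1$ (that is, when $(p-1)\mid m$) and is a vanishing geometric progression otherwise.

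The heart of the argument is to connect $S_m(p)$ with $B_m$ through formula (1.2) evaluated at $N=p$. Using the identity $\tfrac{1}{m+1}\binom{m+1}{k}=\tfrac{1}{m+1-k}\binom{m}{k}$, formula (1.2) becomes
$$
S_m(p)=pB_m+\sum_{k=0}^{m-1}\binom{m}{k}(pB_k)\,\frac{p^{m-k}}{m+1-k}.
$$
I would then argue by induction on the even index $m$ that each summand with $k\le m-1$ is a $p$-integral number divisible by $p$; combined with the power-sum lemma this forces $pB_m\equiv S_m(p)\pmod p$, and since $S_m(p)\in\Bbb Z$ it simultaneously shows that $pB_m$ is $p$-integral, closing the induction and yielding the displayed congruences.

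The main obstacle is the valuation bookkeeping in this last step, namely checking that $\operatorname{ord}_p\big(\binom{m}{k}(pB_k)\,p^{m-k}/(m+1-k)\big)\ge 1$ for every $0\le k\le m-1$. By the inductive hypothesis $pB_k$ is $p$-integral, so it suffices that $(m-k)-\operatorname{ord}_p(m+1-k)\ge 1$, and the delicate cases are those in which $m+1-k$ is small and divisible by $p$. Here the vanishing of $B_k$ for odd $k\ge 3$ is essential: it deletes exactly the dangerous terms in which $m+1-k$ is even (in particular the term $m+1-k=2$), leaving only $k=0,1$, to be checked directly, and even $k$ with $2\le k\le m-2$, for which $j:=m+1-k$ is odd and $\ge 3$. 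For such $j$ the bound $\operatorname{ord}_p(j)\le j-2$ is immediate from $p^{\operatorname{ord}_p(j)}\le j$, which gives the required inequality. With this lemma the induction closes and the theorem follows.
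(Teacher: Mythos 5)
The paper never proves Theorem 1.8: it is quoted as a classical result with citations to von Staudt, Clausen and Hardy--Wright, and only its equivalent congruential form ($pB_{2n}\equiv -1\pmod p$ if $p-1\mid 2n$ and $pB_{2n}\equiv 0\pmod p$ otherwise) is recorded in Remark 1.9 for later use. Your argument therefore cannot be compared to a proof in the paper, but it is a correct, self-contained proof along the standard lines, and the sharper local statement you prove is exactly the congruence of Remark 1.9. The reduction to the one-prime-at-a-time claim about $pB_m$, the primitive-root evaluation of $S_m(p)\bmod p$, the rewriting of formula (1.2) at $n=p$ via $\frac{1}{m+1}\binom{m+1}{k}=\frac{1}{m+1-k}\binom{m}{k}$, and the induction that forces $pB_m\equiv S_m(p)\pmod p$ are all sound. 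The one place that deserves an explicit extra line is the bound $\operatorname{ord}_p(j)\le j-2$ for odd $j\ge 3$: the inequality $p^{\operatorname{ord}_p(j)}\le j$ alone gives only $\operatorname{ord}_p(j)\le\log_2 j$, which fails at $j=3$; you need to invoke that $j$ is odd, so any prime dividing it is at least $3$, whence $a+2\le 3^a\le j$ for $a=\operatorname{ord}_p(j)\ge 1$. With that sentence added (and the harmless notational slip $N=p$ for $n=p$ fixed), the induction closes and the theorem follows; what your route buys is a complete derivation from the paper's own formula (1.2) rather than an appeal to the literature.
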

 \begin{remark}
In literature, von Staudt-Clausen's theorem is often formulated as:
   $$
B_{2n}+\sum_{p-1\mid 2n\atop p{\rm\,\, prime}}\frac{1}{p}\quad
{\rm is\,\, an\,\, integer\,\, for\,\, each\,\,} n=1,2,\ldots,
   $$
or equivalently (see e.g.,  \cite[page 153]{su}):
  \begin{equation*}
pB_{2n}\equiv \left\{
  \begin{array}{ll}
\,\,\,\, 0\,(\bmod{\,p}) & {\rm if}\,\,p-1\nmid 2n\\
-1\,(\bmod{\,p}) & {\rm if}\,\,p-1\mid 2n,
\end{array}\right.
  \end{equation*}
where $p$ is a prime and $k$ a positive integer.

We also point out that in the proof of Theorem 2.4
we use a particular case of a Carlitz-von Staudt's result (see Remark 1.6)
which can be easily deduced from the above form of von Staudt-Clausen's 
theorem.
  \end{remark}
  \subsection{Giuga's conjecture and Giuga numbers}
Notice that if $n$ is any prime, then  by Fermat's little theorem,
$S_{n-1}(n)\equiv -1 (\bmod{\,n})$.  
In 1950 G. Giuga \cite{gi} proposed that the converse is also true 
via the following conjecture.
 \begin{conjecture}[Giuga's conjecture]
A positive integer
$n\ge 2$ is a prime if and only if  
 \begin{equation}
S_{n-1}(n):=\sum_{i=1}^{n-1}i^{n-1}\equiv -1 \pmod{n}.
 \end{equation}
 \end{conjecture}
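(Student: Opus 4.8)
The plan is to separate the two implications, which are of utterly different character. The forward direction is immediate: if $n$ is prime, then by Fermat's little theorem $i^{n-1}\equiv 1\pmod{n}$ for every $i$ with $1\le i\le n-1$, so $S_{n-1}(n)\equiv n-1\equiv -1\pmod{n}$, exactly as noted in the sentence preceding the conjecture. No further work is needed here.

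All the difficulty is concentrated in the converse, and I would begin by localizing the congruence $(1.3)$ at each prime $p\mid n$. The one computational input is the classical power-sum evaluation modulo a prime: for $p$ prime and $k\ge 1$,
\[
\sum_{r=1}^{p-1}r^{k}\equiv\begin{cases}-1\pmod{p}&\text{if }(p-1)\mid k,\\ \ \ 0\pmod{p}&\text{otherwise,}\end{cases}
\]
which also lies behind von Staudt--Clausen's theorem. Since $p\mid n$, the interval $\{1,\dots,n\}$ splits into $n/p$ complete residue systems modulo $p$, and $i^{n-1}\bmod p$ depends only on $i\bmod p$; as the missing term $i=n$ is $\equiv 0\pmod p$ we may complete the blocks and obtain $S_{n-1}(n)\equiv (n/p)\sum_{r=1}^{p-1}r^{n-1}\pmod{p}$. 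Hence $S_{n-1}(n)\equiv -1\pmod p$ forces both $(p-1)\mid n-1$ (otherwise the sum vanishes mod $p$) and $n/p\equiv 1\pmod p$, i.e.\ $p\mid (n/p-1)$. Running this over all $p\mid n$ shows that a composite solution of $(1.3)$ must simultaneously be a Carmichael number, by Korselt's criterion (Definition 1.1), and a Giuga number; squarefreeness is automatic, since $p\mid (n/p-1)$ is incompatible with $p^2\mid n$. Conversely, these two conditions return $(1.3)$ via the Chinese remainder theorem once $n$ is known to be squarefree, so one obtains a clean characterization of the composite solutions.

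The genuine obstacle---and the reason the statement is a conjecture rather than a theorem---is the final nonexistence step: one must prove that no integer is at once a Carmichael number and a Giuga number. I do not expect the Bernoulli and von Staudt--Clausen apparatus developed in this section to break this deadlock; it yields the reduction above but says nothing about the joint incompatibility of the two arithmetic conditions. A defensible write-up therefore proves the forward implication and the reduction in full, and then records that the converse is equivalent to this open incompatibility, for which the known constraints (enormously many prime factors and a size exceeding every computational search) remain only suggestive. Closing that gap is precisely the hard, and still unsolved, part.
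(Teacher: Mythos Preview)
Your proposal is correct and mirrors the paper's own treatment: the statement is recorded as an open conjecture, the forward direction is dispatched by Fermat's little theorem exactly as the paper notes, and your localization argument reproduces the reduction the paper cites (its Proposition~1.16) that any composite solution of $(1.3)$ must be simultaneously a Carmichael number and a Giuga number. Like the paper, you rightly stop there and flag the remaining incompatibility as the genuinely open step.
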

A counterexample to {\it Giuga's conjecture} is called 
a {\it Giuga  number}. It is easy to show that $S_{n-1}(n)\equiv -1\, 
(\bmod{\,n})$ if and only if for each prime divisor $p$ of $n$, $(p-1)\mid (n/p-1)$
and $p\mid (n/p-1)$ (see \cite{gi}, \cite[Theorem 1]{bbb} or 
\cite[p. 22]{r2}). 
Observe  that both these conditions are equivalent to the condition that
$p^2(p-1)\mid p(n-1)$.  
Therefore, any Giuga  number must be squarefree. 
Giuga \cite{gi} showed that there are no exceptions to the conjecture
up to $10^{1000}$. In 1985 E. Bedocchi \cite{bed} improved this bound 
to $n>10^{1700}$. In 1996 D. Borwein, J.M. Borwein, 
P.B. Borwein and R. Girgensohn \cite{bbb} raised  the bound to $n>10^{13887}$. 
In 2011 F. Luca, C. Pomerance and I. Shparlinski \cite{lps}
proved that for any real number $x$, the number of counterexamples to
Giuga's conjecture $G(x):=\#\{n<x:\, n\,\,\mbox{ is composite and}
\,\,S_{n-1}(n)\equiv -1\, (\bmod{\,n})\}$  satisfies the estimate 
$G(x)=O(\sqrt{x}/(\log x)^2)$  as $x\to\infty$ improving slightly 
on a previous result by V. Tipu \cite{t}.
Quite recently,  J.M. Borwein, M. Skerritt and C. Maitland
\cite[Theorem 2.2]{bsm} reported that any counterexample to Giuga's 
primality conjecture is an odd square-free integer with at least 4771 
prime factors and so must exceed $10^{19907}$. 

Let $\varphi(n)$ be the {\it Euler totient function}. 
 \begin{definition}
A positive composite integer $n$ is said to be a {\it Giuga number} if 
  \begin{equation}
 \sum_{k=1}^{n-1}k^{\varphi(n)}\equiv -1 \pmod{n}.
  \end{equation}
\end{definition}
This definition was given by Giuga \cite{gi}.
However, it is known (e.g., see \cite[Theorem 1]{bbb}) that 
a positive composite integer $n$ is a {\it Giuga number} if 
and only if $p^2(p-1)$ divides $n-p$ 
for every prime divisor $p$ of $n$. Moreover, it is easy to see that only 
square-free integers can be Giuga numbers.
 For more information about Giuga numbers see  
D. Borwein et al. \cite{bbb}, J.M. Borwein and E. Wong \cite{bw},  
and E. Wong \cite[Chapter 2]{w}. 

A {\it weak Giuga number} is a composite number $n$ for which the sum
 $$
-\frac{1}{n}+\sum_{p\mid n\atop p\,\,{\rm prime}}\frac{1}{p}
 $$
is an integer. It is known that each Giuga number is a weak Giuga number
and  that $n$ is  a weak Giuga number
if and only if $p^2\mid n-p$ for every prime divisor $p$ of $n$
(see \cite{bbb}).
Up to date only thirteen weak Giuga numbers are known and all these numbers 
are even. The first few Giuga  numbers are
$30, 858, 1722, 66198, 2214408306, 24423128562, 432749205173838$ 
\noindent (see  sequence A007850 in \cite{sl}).

Independently, in  1990 T. Agoh (published in 1995 \cite{ag}; 
see also \cite{bw} and Sloane's sequence A046094 in 
 \cite{sl})  proposed the following conjecture.
  \begin{conjecture}{\rm(Agoh's conjecture)}.
A positive integer $n\ge 2$ is a prime if and 
only if  $nB_{n-1}\equiv -1 (\bmod{\,n})$.
     \end{conjecture}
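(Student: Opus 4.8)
The plan is to split the equivalence into its two implications, since only one of them is genuinely within reach. The forward implication is elementary. Suppose $n$ is prime: if $n=2$ then $nB_{n-1}=2B_1=-1\equiv -1\pmod 2$, so assume $n=p$ is an odd prime, whence $p-1$ is even and $B_{p-1}$ is an even-indexed Bernoulli number. Applying the congruence form of von Staudt-Clausen recorded in Remark 2.3, with generic prime $p$ and even index $p-1$, and noting that trivially $p-1\mid p-1$, we obtain $pB_{p-1}\equiv -1\pmod p$, i.e. $nB_{n-1}\equiv -1\pmod n$. This settles ``prime $\Rightarrow$ congruence'' completely.

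The converse is the substantive half, and the plan is not to attack it head-on but to reduce it to Giuga's conjecture (Conjecture 2.6). First I would dispose of even $n$: for even composite $n\ge 4$ the index $n-1$ is odd, so $B_{n-1}=0$ and $nB_{n-1}=0\not\equiv -1\pmod n$; hence one may assume $n$ odd, so that $n-1$ is even and von Staudt-Clausen applies to $B_{n-1}$. Next, writing $n=\prod_i p_i^{a_i}$, I would use the Chinese remainder theorem to replace $nB_{n-1}\equiv -1\pmod n$ by the system $nB_{n-1}\equiv -1\pmod{p^{a}}$ over all prime powers $p^{a}\,\|\,n$. Fix such a $p$ and write $n=p^{a}m$ with $p\nmid m$. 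By Theorem 2.2 the denominator of $B_{n-1}$ is squarefree and divisible by $p$ exactly when $p-1\mid n-1$; this is where the classification begins. If $p-1\nmid n-1$ then $B_{n-1}$ is $p$-integral and $nB_{n-1}=p^{a}mB_{n-1}\equiv 0\pmod{p^{a}}$, which cannot equal $-1$, so the congruence forces $p-1\mid n-1$ for every prime $p\mid n$. If instead $p-1\mid n-1$, then $pB_{n-1}$ is $p$-integral with $pB_{n-1}\equiv -1\pmod p$, and $nB_{n-1}=p^{a-1}m\,(pB_{n-1})$; for $a\ge 2$ this is $\equiv 0\pmod p$, again incompatible with $-1$, so $n$ must be squarefree, while for $a=1$ one computes $nB_{n-1}\equiv -m\pmod p$, forcing $m=n/p\equiv 1\pmod p$, that is $p\mid n/p-1$.

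The remaining step is pure bookkeeping: I would check that the conditions just extracted, namely $n$ squarefree together with $p-1\mid n-1$ and $p\mid n/p-1$ for every prime $p\mid n$, coincide with the defining conditions of a Giuga number. The only nontrivial conversion is between $p-1\mid n-1$ and Giuga's $p-1\mid n/p-1$, and this is immediate from the identity $n-1=p\,(n/p-1)+(p-1)$, which yields $n-1\equiv n/p-1\pmod{p-1}$. Consequently a composite $n$ satisfies $nB_{n-1}\equiv -1\pmod n$ if and only if $n$ is a Giuga number, so the converse of Agoh's conjecture is \emph{equivalent} to Giuga's conjecture. The main obstacle is therefore not algebraic: after this reduction, proving ``congruence $\Rightarrow$ prime'' amounts to showing that no Giuga numbers exist, which is precisely the open problem discussed after Conjecture 2.6. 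Thus the honest outcome of this plan is a complete proof of the easy implication together with a reduction demonstrating that the hard implication is no more tractable than Giuga's conjecture itself.
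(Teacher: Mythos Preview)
Your proposal is correct and matches the paper's treatment: the statement is a \emph{conjecture}, and the paper does not prove it but records (Remark~1.13 and Proposition~1.14, citing Kellner) exactly the equivalence you establish, namely that Agoh's conjecture is equivalent to Giuga's conjecture. Your von Staudt--Clausen argument for the forward implication and your prime-by-prime analysis of $nB_{n-1}\pmod{p^a}$ for the reduction are the standard route, and your honest conclusion that the hard implication remains open is the right one.

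One small terminological slip: the conditions you extract (squarefree, $p-1\mid n-1$, and $p\mid n/p-1$ for every $p\mid n$) characterise a \emph{counterexample to Giuga's conjecture}, which in the paper's language (Proposition~1.16) is a number that is simultaneously a Carmichael number and a Giuga number, not merely a Giuga number. The known Giuga numbers $30,858,\ldots$ satisfy $p\mid n/p-1$ and $p-1\mid n/p-1$ but fail the Carmichael condition, so they do not violate Agoh's congruence. This does not affect your argument, since your final claim---that the converse is equivalent to Giuga's conjecture---is stated correctly.
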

 \begin{remark}
Notice  that the denominator of the  number $nB_{n-1}$ can be greater than 1, 
but since by  {\it von Staudt-Clausen's theorem} (Theorem 1.8), 
the denominator of any Bernoulli number $B_{2k}$ is squarefree, it follows 
that  the denominator of $nB_{n-1}$ 
is invertible modulo $n$. In 1996  it was  
reported by  T. Agoh \cite{bbb} that his conjecture is equivalent 
to Giuga's conjecture, hence the name Giuga-Agoh's conjecture found in the
litterature.  Therefore, 
   \end{remark}
 \begin{proposition}
Giuga's conjecture and Agoh's conjecture are equivalent. 
 \end{proposition}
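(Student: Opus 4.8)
The plan is to reduce the equivalence of the two conjectures to a single congruence comparison. For a prime $n$, Fermat's little theorem gives $S_{n-1}(n)\equiv -1\pmod n$, while von Staudt-Clausen's theorem (Theorem 1.8) gives $nB_{n-1}\equiv -1\pmod n$ since $n-1\mid n-1$; thus the forward implications of both Conjectures 1.10 and 1.12 are already unconditional theorems. Hence it suffices to prove that for every integer $n\ge 2$ the two conditions
\[
S_{n-1}(n)\equiv -1\pmod n \qquad\text{and}\qquad nB_{n-1}\equiv -1\pmod n
\]
are equivalent, for then the two conjectures coincide after taking the conjunction over all composite $n$. Note that $nB_{n-1}$ is meaningful modulo $n$ because the denominator of $B_{n-1}$ is squarefree, hence invertible modulo $n$ (Remark 1.13).

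The heart of the argument is the claim that $S_{n-1}(n)\equiv nB_{n-1}\pmod n$ for every odd $n$. Applying Bernoulli's formula (1.2) with $k=n-1$ and isolating the term $i=n-1$, which equals $nB_{n-1}$, gives
\[
S_{n-1}(n)=\frac1n\sum_{i=0}^{n-1}\binom{n}{i}B_i\,n^{n-i}=nB_{n-1}+\sum_{i=0}^{n-2}\binom{n}{i}B_i\,n^{n-i-1},
\]
so it remains to show that the final sum vanishes modulo $n$. Fix a prime $p\mid n$, necessarily odd, and set $a=v_p(n)\ge 1$. Since $n$ is odd we have $B_i=0$ for every odd $i\ge 3$, and in particular $B_{n-2}=0$; hence every nonzero term of the sum carries a power $n^{n-i-1}$ with exponent at least $2$. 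Von Staudt-Clausen's theorem (Theorem 1.8), in the form that the denominator of each $B_i$ is squarefree, yields $v_p(B_i)\ge -1$ for every $i$ (here using that $p$ is odd), so each surviving term has $p$-adic valuation at least $2a-1\ge a$. Summing over $i$ and ranging over the primes $p\mid n$ via the Chinese remainder theorem gives $S_{n-1}(n)\equiv nB_{n-1}\pmod n$, and the two conditions above coincide for odd $n$.

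The even case must be handled separately, since the congruence $S_{n-1}(n)\equiv nB_{n-1}\pmod n$ genuinely fails there, already for $n=6$. For even $n\ge 4$ I would instead show that both conditions fail outright. On the one hand $n-1$ is odd and at least $3$, so $B_{n-1}=0$ and therefore $nB_{n-1}=0\not\equiv -1\pmod n$. On the other hand, reindexing $i\mapsto n-i$ gives $S_{n-1}(n)\equiv -S_{n-1}(n)\pmod n$, so $2S_{n-1}(n)\equiv 0\pmod n$; if $S_{n-1}(n)\equiv -1$ held, this would force $n\mid 2$, impossible for $n\ge 4$. Thus both conditions are false, hence vacuously equivalent, for every even composite $n$, while the sole even prime $n=2$ satisfies both directly. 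Combining the odd and even cases establishes the equivalence of the two conditions for all $n\ge 2$, and with it the equivalence of the conjectures.

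I expect the main obstacle to be the $p$-adic bookkeeping in the odd case: one must simultaneously control the (squarefree) denominators of the Bernoulli numbers through von Staudt-Clausen and the powers of $n$ supplied by Bernoulli's formula, and the margin is slim, as the estimate $2a-1\ge a$ succeeds only because the $B_{n-2}$ term drops out for odd $n$, which is precisely what breaks down for even $n$. Securing this interplay, rather than the purely formal reduction, is where the genuine content of the proof lies.
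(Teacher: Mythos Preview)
Your argument is correct and follows precisely the route the paper itself indicates (without spelling out): the paper does not prove Proposition~1.14 but refers to \cite{bbb} and \cite{kel1,kel2}, noting that the equivalence ``can be seen from the Bernoulli formula (1.2) after some analysis involving von Staudt--Clausen's theorem,'' which is exactly the computation you carry out. One cosmetic point: the assertion $B_{n-2}=0$ fails for $n=3$, but since $3$ is prime both conditions hold there anyway and nothing is lost.
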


It was pointed out in \cite{bbb} 
that this can be seen  from the Bernoulli formula (1.2) after some
analysis involving von Staudt-Clausen's theorem.
The equivalence of both conjectures is in details proved 
in 2002 by B.C. Kellner \cite[Satz 3.1.3, Section 3.1, p. 97]{kel1} 
(also see \cite[Theorem 2.3]{kel2}).  
In a recent manuscript  \cite[Subsection 2.1]{me1} 
the author of this article  proposed several Giuga-Agoh's-like conjectures. 

Notice that von Staudt-Clausen's theorem allows one 
to give the following equivalent reformulation of  Korselt's 
criterion involving the {\it Bernoulii number} $B_{n-1}$ is 
(see e.g., \cite[Section 2, Remarks after Proposition 2]{psw}, \cite{we}).
 \begin{definition}
An odd composite positive integer $n$ is a Carmichael number if and only if
$n$ is  squarefree and $n$ divides the denominator of the Bernoulli number 
$B_{n-1}$.
   \end{definition}
We present the following relationship  between Giuga's conjecture and
Carmi-\
chael numbers. 
 \begin{proposition}{\rm(see e.g.,} \cite[Theorem]{bbb} {\rm or} 
\cite[Corollary 4]{gm}{\rm )}
A positive integer $n$ is a counterexample to Giuga's conjecture
if and only if it is both a Carmichael and a Giuga number.
 In other words, a positive integer $n$ satisfies the congruence
 \begin{equation}
\sum_{i=1}^{n-1}i^{n-1}\equiv -1 \pmod{n}
 \end{equation} 
if and only if is $n$ is both a Carmichael and a Giuga number.
 \end{proposition}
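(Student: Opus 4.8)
The plan is to reduce the single congruence to the two elementary divisibility conditions attached to each prime divisor of $n$, and then to recognize them separately as Korselt's criterion and as the defining property of Giuga numbers. Throughout I take $n$ to be composite, since both notions on the right-hand side are defined only for composite $n$ (a prime $n$ satisfies the congruence by Fermat's little theorem, each term being $\equiv 1$, but is not a counterexample). The starting point is the characterization recorded earlier in this subsection: $S_{n-1}(n)\equiv -1\pmod{n}$ holds if and only if, for every prime divisor $p$ of $n$, one has both (a) $(p-1)\mid(n/p-1)$ and (b) $p\mid(n/p-1)$. If one prefers a self-contained derivation, this follows by evaluating modulo each prime $p\mid n$: the $n$ consecutive integers $0,1,\dots,n-1$ cover each residue class modulo $p$ exactly $n/p$ times, so $S_{n-1}(n)\equiv (n/p)\sum_{r=1}^{p-1}r^{n-1}\pmod{p}$, which by the standard power-sum congruence is $-(n/p)$ when $(p-1)\mid(n-1)$ and $0$ otherwise; requiring this to be $\equiv -1\pmod{p}$ forces both $(p-1)\mid(n-1)$ and $n/p\equiv 1\pmod{p}$, that is, (a) and (b).

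First I would extract squarefreeness and the Carmichael factor from (a). Condition (b) already forces $n$ to be squarefree: if $p^{2}\mid n$ then $p\mid n/p$, so $n/p-1\equiv -1\not\equiv 0\pmod{p}$ and (b) fails. Next, since $p\equiv 1\pmod{p-1}$ we have $n=p\cdot(n/p)\equiv n/p\pmod{p-1}$, whence $(p-1)\mid(n-1)$ if and only if $(p-1)\mid(n/p-1)$; thus (a) is exactly Korselt's divisibility condition. Consequently ``$n$ squarefree together with (a) for every $p\mid n$'' is precisely Korselt's criterion (Definition 1.1), i.e.\ $n$ is a Carmichael number. As a by-product the conditions force $n$ odd, so no separate oddness hypothesis is needed: if a composite $n$ were even, an odd prime $p\mid n$ would make the even number $p-1$ divide the odd number $n-1$, which is impossible.

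Second I would identify (b) with the Giuga property. The condition $p\mid(n/p-1)$ for every prime $p\mid n$ is the defining property of Giuga numbers; concretely, evaluating the Giuga sum modulo $p$ exactly as above and using that $(p-1)\mid\varphi(n)$ for every $p\mid n$ gives $\sum_{k=1}^{n-1}k^{\varphi(n)}\equiv -(n/p)\pmod{p}$, so the congruence of Definition 1.13 holds if and only if $n/p\equiv 1\pmod{p}$ for all $p\mid n$, i.e.\ (b) holds for all $p$. Since (b) by itself already entails squarefreeness, the Giuga condition is equivalent to (b) alone.

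Assembling the pieces then finishes the proof: $S_{n-1}(n)\equiv -1\pmod{n}$ is equivalent to ``(a) and (b) for every $p\mid n$'', which by the two preceding paragraphs splits as ``[$n$ squarefree and (a)]'' together with ``[(b)]'', that is, as ``$n$ is a Carmichael number'' together with ``$n$ is a Giuga number''. I expect the only genuine work to lie in the two modular evaluations of the power sums underlying these characterizations; once those are granted, the decoupling of the one congruence into its Carmichael part (condition (a)) and its Giuga part (condition (b)) is purely formal, the one mild subtlety being to notice that squarefreeness is already contained in (b), so that it is not imposed twice.
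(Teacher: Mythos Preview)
Your argument is correct and follows the standard route. The paper itself does not supply a proof of this proposition; it merely cites \cite[Theorem]{bbb} and \cite[Corollary 4]{gm}, having already recorded (just above Conjecture~1.10) the key characterization that $S_{n-1}(n)\equiv -1\pmod{n}$ is equivalent to the pair of conditions $(p-1)\mid(n/p-1)$ and $p\mid(n/p-1)$ for every prime $p\mid n$. Your proof is precisely the decoupling of that pair into Korselt's criterion and the Giuga condition, which is exactly how the cited references proceed.

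Two small remarks. First, your ``self-contained derivation'' of the (a)--(b) characterization works only modulo each prime $p$, not modulo prime powers $p^e\parallel n$; this is harmless because, as you observe, (b) forces $n$ to be squarefree, so the Chinese remainder theorem reduces the congruence modulo $n$ to the congruences modulo the distinct primes. It would be cleaner to say this explicitly. Second, your identification of (b) alone with Definition~1.11 is correct (the key point being that $(p-1)\mid\varphi(n)$ automatically for every $p\mid n$), and in fact this is sharper than the characterization $p^2(p-1)\mid n-p$ quoted in the paper after Definition~1.11, which would additionally impose condition~(a); the example $n=30$, $p=3$ shows that the latter condition is strictly stronger than Definition~1.11.
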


In 2011 J.M. Grau and A.M. Oller-Marc\'en \cite{gm} established
a new approach to  Giuga's conjecture as follows.
 \begin{proposition}{\rm(}\cite[Corollary 3]{gm}{\rm)}
If a positive integer $n$ is a counterexample to Giuga's conjecture,
then for each positive integer $k$ 
 \begin{equation}
\sum_{i=1}^{n-1}i^{k(n-1)}\equiv -1 \pmod{n}.
 \end{equation} 
  \end{proposition}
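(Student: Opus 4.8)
The plan is to reduce the claim to a prime-by-prime congruence. By Proposition 1.16 a counterexample $n$ to Giuga's conjecture is simultaneously a Carmichael number and a Giuga number; from this I extract three facts that I will use repeatedly: $n$ is squarefree, $p-1\mid n-1$ for every prime $p\mid n$, and $p\mid(n/p-1)$ for every prime $p\mid n$ (the last being part of the characterization of counterexamples recorded just after Conjecture 1.10). Since $n$ is squarefree, the Chinese remainder theorem reduces the congruence to be proved to showing, for each prime $p\mid n$ and each $k\ge1$, that
\[
\sum_{i=1}^{n-1} i^{k(n-1)}\equiv -1 \pmod{p},
\]
after which the residues reassemble modulo $n$.

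Fixing such a prime $p$, I would split the sum according to divisibility by $p$. Every term with $p\mid i$ vanishes modulo $p$, so only the integers $i\in\{1,\dots,n-1\}$ coprime to $p$ matter. For each of these, Fermat's little theorem gives $i^{p-1}\equiv1\pmod p$, and writing $n-1=(p-1)m$ via the Carmichael condition $p-1\mid n-1$ yields $i^{n-1}\equiv1\pmod p$; since raising $1$ to the $k$-th power changes nothing, $i^{k(n-1)}\equiv1\pmod p$ for every admissible $i$ and every $k\ge1$. Hence modulo $p$ the sum is congruent to the number of integers in $\{1,\dots,n-1\}$ not divisible by $p$.

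To finish I would count those integers. Because $p\mid n$, the multiples of $p$ lying in $\{1,\dots,n-1\}$ are exactly $p,2p,\dots,(n/p-1)p$, so there are $n/p-1$ of them and therefore $(n-1)-(n/p-1)=n-n/p$ integers coprime to $p$. Thus the sum is congruent to $n-n/p\pmod p$, which collapses to $-n/p\pmod p$ because $p\mid n$; invoking the Giuga condition $p\mid(n/p-1)$, i.e.\ $n/p\equiv1\pmod p$, gives exactly $-1\pmod p$. Running this through the Chinese remainder theorem over all prime divisors of $n$ completes the argument.

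The part that carries the actual content is the cooperation of the two divisibility conditions. The Carmichael condition $p-1\mid n-1$ is what forces every unit to contribute precisely $1$, and this is manifestly insensitive to the exponent $k$, which is the reason the passage from the Giuga case $k=1$ to arbitrary $k$ is essentially free. The Giuga condition $p\mid(n/p-1)$ is then exactly what converts the count $-n/p$ into the desired value $-1$. There is no deeper analytic obstacle; the only points demanding a little care are counting the multiples of $p$ correctly and remembering that squarefreeness is what licenses the final reassembly modulo $n$.
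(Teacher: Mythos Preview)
Your argument is correct. The paper does not supply its own proof of this proposition---it is quoted as \cite[Corollary 3]{gm}---but the route you take is the standard and natural one: reduce modulo each prime $p\mid n$ using squarefreeness, observe that the Carmichael condition $p-1\mid n-1$ makes every unit contribute $1$ regardless of the exponent $k$, and then use the Giuga condition $p\mid(n/p-1)$ to turn the unit count $n-n/p$ into $-1$ modulo $p$. Your counting of the multiples of $p$ in $\{1,\dots,n-1\}$ and the final CRT reassembly are both handled correctly.
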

\begin{remark} 
Proposition 1.17 leads to the generalization of Giuga's ideas in the following
way \cite[Section 3]{gm}: Do there exist integers $k$ such that the
congruence (1.6) is satisfied by some composite integer $n$?
Several open problems concerning Giuga's conjecture
can be found in J.M. Borwein and E. Wong \cite[8, E Open Problems]{bw}.
 \end{remark}
\begin{remark} 
Quite recently, J.M. Grau and A.M. Oller-Marc\'en 
\cite[Theorem 1]{gm3} characterized, in terms of the prime divisors of
$n$, the pairs $(k,n)$ for which $n$ divides $S_k(n)$. More generally,
in \cite{gm3} it is investigated $S_{f(n)}(b)(\bmod{\, n})$ for different
arithmetic functions $f$.
 \end{remark}
\section{Weak Carmichael numbers}

\subsection{Sum of powers of coprime residues of $n$}
The {\it Euler totient function} $\varphi(n)$ is defined as equal to the 
number of positive integers less than $n$ which are relatively prime
to $n$. Each of these $\varphi(n)$ integers is called a {\it totative}
(or ``{\it totitive}") of $n$ (see \cite[Section 3.4, p. 242]{sc} where this notion 
is attributed to J.J. Sylvester). Let $t(n)$ denote the 
set of all totatives of $n$, i.e., 
$t(n)=\{j\in\Bbb N:\, 1\le j<n,\gcd(j,n)=1\}$. Given any fixed nonnegative
integer $k$, in 1850 A. Thacker (see \cite[p. 242]{sc})
introduced the function $\varphi_k(n)$ defined as
  \begin{equation}
\varphi_k(n)=\sum_{t\in t(n)}t^k
  \end{equation}
where the summation ranges over all totatives $t$ of $n$ (in addition,
we define $\varphi_k(1)=0$ for all $k$). 
Notice that $\varphi_0(n)=\varphi(n)$ and 
 $\varphi_k(n)=S_k(n)$ holds if and only if $n=1$ or $n$ is a prime number.

The following recurrence relation for the functions $\varphi_k(n)$ 
was established in 1857 by J. Liouville (cf. \cite[p. 243]{sc}):
  $$
\sum_{d\mid n}{n\choose d}^k\varphi_k(d)=S_k(n+1):=1^k+2^k+\cdots +n^k
 $$
which for $k=0$ reduces to {\it Gauss' formula} $\sum_{d\mid n}\varphi(d)=n$.
Furthermore, in 1985 
P.S. Bruckman \cite{br} established an explicit Bernoulli's-like formula 
for the {\it Dirichlet series} of $\varphi_k(n)$ defined as 
$f_k(s)=\sum_{k=1}^{\infty}\varphi_k(n)/n^s$ (there  $\varphi_k(n)$
is called generalized Euler function). 
Quite recently, in \cite[Corollary 1.9]{me2} the author of this article 
proved for all $k\ge 1$  and $n\ge 2$ the following recurrence relation involving the functions 
$\varphi_k(n)$. 
  \begin{equation*}
\sum_{i=0}^{2k-1}(-1)^i{2k-1\choose i}2^{2k-1-i}n^i\varphi_{2k-1-i}(n)=0.
 \end{equation*}
\subsection{Weak Carmichael numbers}
Inspired by the  previous definitions, results, and considerations 
we give the following definition.
  
   \begin{definition}
A composite positive integer $n$ is said to be a 
{\it weak Carmichael number} if 
  \begin{equation}
 \sum_{\gcd(k,n)=1\atop 1\le k\le n-1}k^{n-1}\equiv \varphi(n) \pmod{n},
  \end{equation}
where the summation ranges over all $k$ such that $1\le k\le n-1$ 
and $\gcd(k,n)=1$.
 \end{definition}
From the above definition we see that each Carmichael number is
also a weak Carmichael number; hence the name.
This together with the mentioned  result
that the set  of Carmichael numbers is infinite implies the  
 following fact.

\begin{proposition}
There are infinitely many weak Carmichael numbers.
\end{proposition}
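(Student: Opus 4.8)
The plan is to realize the set of weak Carmichael numbers as a superset of the set of Carmichael numbers, and then to quote the known infinitude of the latter. Since by Definition 2.1 a weak Carmichael number is merely a composite integer satisfying the congruence (2.2), and (2.2) coincides with (1.1), the entire argument reduces to showing that every Carmichael number satisfies (1.1).

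First I would fix an arbitrary Carmichael number $n$. By Korselt's criterion (Definition 1.1), such an $n$ is a composite squarefree odd integer for which $p-1\mid n-1$ holds for every prime divisor $p$ of $n$. But the condition ``$p-1\mid n-1$ for every prime divisor $p$ of $n$'' is precisely the hypothesis appearing in the characterization of Theorem 1.5. Hence Theorem 1.5 applies and shows that $n$ satisfies the congruence (1.1); since $n$ is composite, Definition 2.1 then certifies that $n$ is a weak Carmichael number. This establishes the inclusion of the Carmichael numbers among the weak Carmichael numbers, which is exactly the observation recorded just before the present statement.

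Second, I would invoke the theorem of Alford, Granville and Pomerance \cite{agp}, by which there are more than $x^{2/7}$ Carmichael numbers not exceeding $x$ once $x$ is large, and in particular the set of Carmichael numbers is infinite. Combining this with the inclusion from the first step yields at once infinitely many weak Carmichael numbers, which completes the proof.

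There is no genuine obstacle internal to this argument. The only deep ingredient is the result of Alford, Granville and Pomerance on the infinitude of Carmichael numbers, which is cited rather than reproved; everything else is immediate once Theorem 1.5 is available, since the divisibility condition $p-1\mid n-1$ is shared by both Korselt's criterion and the characterization of Theorem 1.5. I would note, finally, that the argument produces nothing beyond this inclusion, so the genuinely new content of the theory---the weak Carmichael numbers that are \emph{not} Carmichael---lies outside the scope of this particular proof.
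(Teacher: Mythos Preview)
Your proposal is correct and follows essentially the same approach as the paper: establish that every Carmichael number is a weak Carmichael number, then invoke Alford--Granville--Pomerance. The only minor difference is that the paper observes the inclusion directly from the definitions (since $a^{n-1}\equiv 1\pmod{n}$ for each reduced residue $a$, summing gives $\sum a^{n-1}\equiv\varphi(n)\pmod{n}$), whereas you route through Korselt's criterion and Theorem~1.5; both verifications are valid.
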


The following  characterization of weak Carmichael numbers
may be useful for computational purposes.

     \begin{proposition}
Every weak Carmichael number is odd. Furthermore,  
an odd composite positive integer $n$ is a weak Carmichael number if and only 
if
  \begin{equation}
2\sum_{i=1}^{\varphi(n)/2}r_i^{n-1}\equiv \varphi(n)\pmod{n},
  \end{equation}
where $r_1<r_2<\cdots <r_{\varphi(n)}$ are all reduced residues modulo $n$.
  \end{proposition}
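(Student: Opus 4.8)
The plan is to exploit the bijection $k\mapsto n-k$ on the set of totatives of $n$, together with the elementary congruence $(n-k)^{n-1}\equiv(-1)^{n-1}k^{n-1}\pmod n$; the parity of $n-1$ will then settle both assertions at once. Throughout I abbreviate the left-hand side of the defining congruence as
\[
T(n):=\sum_{\gcd(k,n)=1\atop 1\le k\le n-1}k^{n-1},
\]
so that $n$ is a weak Carmichael number precisely when $T(n)\equiv\varphi(n)\pmod n$, i.e. (2.2) holds, and I note that the index $k$ ranges over exactly the reduced residues $r_1<\cdots<r_{\varphi(n)}$ modulo $n$.

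For the oddness claim I would argue by contradiction, assuming $n\ge 4$ composite and even. The map $k\mapsto n-k$ carries totatives to totatives, and its only possible fixed point is $k=n/2$, which is excluded because $\gcd(n/2,n)=n/2\ge 2$; hence the totatives split into disjoint two-element blocks $\{k,n-k\}$. Since $n$ is even, $n-1$ is odd, so for each block
\[
k^{n-1}+(n-k)^{n-1}\equiv\bigl(1+(-1)^{n-1}\bigr)k^{n-1}=0\pmod n.
\]
Summing over the blocks gives $T(n)\equiv 0\pmod n$, whereas $0<\varphi(n)<n$ forces $\varphi(n)\not\equiv 0\pmod n$. Thus an even composite $n$ cannot be weak Carmichael, and every weak Carmichael number is odd.

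For the stated equivalence I fix $n$ odd and composite, so $n\ge 9$ and $\varphi(n)$ is even (hence $\varphi(n)/2\in\mathbb N$). The same bijection now acts on the ordered list of residues by reversal, $r_{\varphi(n)+1-i}=n-r_i$, because $n/2$ is not an integer and the totatives are symmetric about $n/2$. Pairing the $i$-th smallest residue with the $i$-th largest, and using that $n-1$ is now even so that $(n-r_i)^{n-1}\equiv r_i^{n-1}\pmod n$, I obtain
\[
T(n)=\sum_{i=1}^{\varphi(n)/2}\Bigl(r_i^{n-1}+(n-r_i)^{n-1}\Bigr)\equiv 2\sum_{i=1}^{\varphi(n)/2}r_i^{n-1}\pmod n.
\]
Comparing this with the defining congruence $T(n)\equiv\varphi(n)\pmod n$ yields (2.3), and conversely, so the equivalence follows.

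The argument is short, so the work is essentially bookkeeping rather than a genuine obstacle: one must check that $k\mapsto n-k$ is fixed-point-free on totatives (handled by $\gcd(n/2,n)>1$ in the even case), that the ordered residues really pair as $r_i$ with $r_{\varphi(n)+1-i}$ in the odd case, and the standard facts $0<\varphi(n)<n$ and $2\mid\varphi(n)$ for $n\ge 3$. The one point demanding care is tracking the sign $(-1)^{n-1}$, since it is exactly the parity of $n-1$ that flips the per-block contribution from $0$ for even $n$ to $2r_i^{n-1}$ for odd $n$; getting this right is what makes a single pairing argument prove both halves of the proposition simultaneously.
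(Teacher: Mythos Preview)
Your proof is correct and follows essentially the same approach as the paper: both arguments pair each reduced residue $r_i$ with $n-r_i$, use $(n-r_i)^{n-1}\equiv(-1)^{n-1}r_i^{n-1}\pmod n$, and let the parity of $n-1$ distinguish the even case (where the sum collapses to $0\not\equiv\varphi(n)$) from the odd case (where it doubles the half-sum). Your write-up is in fact more careful than the paper's, explicitly verifying that the pairing is fixed-point-free and that $\varphi(n)$ is even.
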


As noticed above, the  results, definitions and conjectures 
in this article are mainly based on  Theorem 1.5 
(a result of Borwein and Wong \cite[Corollary 8]{bw}) 
which in terms of weak Carmichael numbers can be reformulated as 
the following Korselt's type criterion for 
characterizing weak Carmichael numbers.
  \begin{theorem}
Let $n=p_1^{e_1}p_2^{e_2}\cdots p_s^{e_s}$ be a composite integer,
where $p_1,p_2,\ldots ,p_s$ are distinct odd primes and 
$e_1,e_2,\ldots ,e_s$ are positive integers. Then $n$ is a weak Carmichael 
number if and only $p_i-1\mid n-1$ for every $i=1,2,\ldots,s$. 
 \end{theorem}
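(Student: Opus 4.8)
The plan is to read the left-hand side of the defining congruence (2.2) as the generalized Euler function $\varphi_{n-1}(n)=\sum_{t\in t(n)}t^{n-1}$ of Subsection 2.1, and to analyze $\varphi_{n-1}(n)-\varphi(n)$ one prime power at a time: since $n\mid\varphi_{n-1}(n)-\varphi(n)$ holds if and only if $p_i^{e_i}\mid\varphi_{n-1}(n)-\varphi(n)$ for every $i$, the Chinese remainder theorem lets me localize. I note at the outset that $n$ is odd (a product of odd primes), so $m:=n-1$ is even; this parity is exactly what will allow me to invoke an even-exponent power-sum evaluation below.

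First I would carry out a purely combinatorial reduction at a fixed prime power $q=p^{e}$ exactly dividing $n$. Writing $M=n/q$, every totative $k$ of $n$ reduces modulo $q$ to a unit, and by the Chinese remainder theorem each fixed unit residue $a\pmod q$ is represented by exactly $\varphi(M)$ totatives of $n$. Since $k^{m}\equiv(k\bmod q)^{m}\pmod q$, this yields
$$\varphi_{m}(n)\equiv\varphi(M)\,\varphi_{m}(q)\pmod{q},$$
while $\varphi(n)=\varphi(M)\varphi(q)$ by multiplicativity. The whole question is thereby reduced to evaluating the restricted power sum $\varphi_{m}(p^{e})=\sum_{p\nmid a,\,1\le a\le p^{e}}a^{m}$ modulo $p^{e}$.

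To evaluate it, I would split off the multiples of $p$: $\varphi_{m}(p^{e})=\bigl(\sum_{k=1}^{p^{e}}k^{m}\bigr)-p^{m}\sum_{j=1}^{p^{e-1}}j^{m}$, and since $m\ge e$ the second term vanishes modulo $p^{e}$, giving $\varphi_{m}(p^{e})\equiv S_{m}(p^{e})\pmod{p^{e}}$. Now the even-exponent Carlitz--von Staudt evaluation (the result quoted in Remark 1.6, itself a consequence of von Staudt--Clausen, Theorem 1.8) gives $S_{m}(p^{e})\equiv-p^{e-1}\pmod{p^{e}}$ when $(p-1)\mid m$ and $S_{m}(p^{e})\equiv0\pmod{p^{e}}$ otherwise. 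As $\varphi(p^{e})=p^{e}-p^{e-1}\equiv-p^{e-1}\pmod{p^{e}}$, combining with the reduction gives
$$\varphi_{m}(n)-\varphi(n)\equiv\varphi(M)\bigl(\varphi_{m}(p^{e})+p^{e-1}\bigr)\pmod{p^{e}},$$
which vanishes automatically when $(p-1)\mid(n-1)$, and equals $\varphi(M)\,p^{e-1}\pmod{p^{e}}$ otherwise. In the latter case $p^{e}\mid\varphi_m(n)-\varphi(n)$ holds precisely when $p\mid\varphi(M)$, that is, when $n$ has a prime divisor $q\ne p$ with $q\equiv1\pmod p$.

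The ``if'' direction is then immediate: if $(p_i-1)\mid(n-1)$ for all $i$, every local difference vanishes and $n$ is a weak Carmichael number. The main obstacle is the converse, since the escape clause $p\mid\varphi(M)$ must be excluded globally rather than prime-by-prime. I would argue by contradiction: let $S$ be the assumed nonempty set of primes $p\mid n$ with $(p-1)\nmid(n-1)$. For $p\in S$ the congruence forces a witness prime $q\mid n$ with $p\mid q-1$; but if $q$ were good, i.e.\ $(q-1)\mid(n-1)$, then $p\mid q-1\mid n-1$ while $p\mid n$, so $p\mid\gcd(n,n-1)=1$, which is absurd. Hence the witness $q$ again lies in $S$ and satisfies $q>p$. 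Thus every element of $S$ is dominated by a strictly larger element of $S$, contradicting finiteness of $S$, so $S=\emptyset$. I expect this acyclicity observation --- that the relation ``$q\equiv1\pmod p$'' among the prime divisors of $n$ admits no cycles --- to carry the real content, the local power-sum computation being routine once Carlitz--von Staudt is in hand.
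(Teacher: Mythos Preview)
Your proposal is correct and follows essentially the same route as the paper's proof: localize via the Chinese remainder theorem (the paper's Lemmas 4.2 and 4.4), evaluate the restricted power sum modulo $p^{e}$ via Carlitz--von Staudt (the paper's Lemma 4.8), and for the converse run the maximal-bad-prime argument exactly as in the last paragraph of the paper's proof. Your version is in fact slightly more streamlined, since you apply Carlitz--von Staudt directly with the even exponent $n-1$, whereas the paper first reduces the exponent to $p_i^{e_i-1}-1$ (Lemma 4.3) and invokes a separate vanishing lemma (Lemma 4.7) for the case $(p-1)\nmid(n-1)$.
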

\begin{remark}  Any integer greater than 1 and satisfying 
the congruence (2.2) is called in \cite{bw}) a {\it generalized Carmichael 
number}. Therefore, by Definition 2.1, the set of all generalized Carmichael 
numbers is a union of the set 
of weak Carmichael numbers and the set of all primes. 
The following result of  E. Wong (\cite[p. 17, Subsection 2.5.3]{w} where 
weak Carmichael numbers are called {\it pseudo-Carmichael numbers}) is 
immediate by Euler totient theorem and it establish   the fact that there are numerous 
weak Carmichael numbers that are not prime powers nor Carmichael numbers. 
   \end{remark}

Here, as always in the sequel, ${\rm lcm}(\cdot)$  will denote
the {\it least common multiple function}.
 \begin{proposition}
Let $p_1,p_2,\ldots,p_s$ be distinct primes such that 
$p_i-1\not\equiv 0 (\bmod{\, p_j})$ for each pair of indices $i,j$
with $1\le i\not= j\le s$. For all $j=1,2,\ldots,s$ put 
$e_i={\rm lcm}_{1\le j\le s\atop j\not= i}\varphi(p_j)$. Then 
any number of the form 
$p_1^{k_1e_1}p_2^{k_2e_2}\cdots p_s^{k_se_s}$
with $k_i\ge 1$, is a weak Carmichael number. Conversely, 
if $n$ is a weak Carmichael number with prime factors $p_1,p_2,\ldots,p_s$, 
then $p_i-1\not\equiv 0 (\bmod{\, p_j})$ for each pair of indices $i,j$
with $1\le i\not= j\le s$.
 \end{proposition}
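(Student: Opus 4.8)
The plan is to deduce both directions from Theorem 2.4, which reduces the property of being a weak Carmichael number to the divisibility conditions $p_i-1\mid n-1$, one for each prime divisor $p_i$ of $n$. Thus everything comes down to controlling the residue of $n$ modulo $p_i-1$, and the role of the hypothesis $p_i-1\not\equiv0\pmod{p_j}$ is to guarantee coprimality, so that Euler's theorem becomes available. I would dispose of the converse first, since it is the cleaner direction and isolates the meaning of the hypothesis.

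For the converse, suppose $n$ is a weak Carmichael number with prime factors $p_1,\dots,p_s$, and suppose for contradiction that $p_j\mid p_i-1$ for some pair $i\neq j$. Since $p_j\mid n$ we have $n\equiv0\pmod{p_j}$, hence $p_j\nmid n-1$. On the other hand, Theorem 2.4 forces $p_i-1\mid n-1$, and together with $p_j\mid p_i-1$ this gives $p_j\mid n-1$, a contradiction. Therefore no prime factor $p_j$ of $n$ can divide $p_i-1$, which is exactly the asserted condition.

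For the forward direction I fix an index $i$ and reduce $n=\prod_{j}p_j^{k_je_j}$ modulo $p_i-1$, aiming to show $n\equiv1$. The factor $j=i$ is immediate, since $p_i\equiv1\pmod{p_i-1}$ gives $p_i^{k_ie_i}\equiv1$. For $j\neq i$ the hypothesis $p_j\nmid p_i-1$, with $p_j$ prime, yields $\gcd(p_j,p_i-1)=1$, so Euler's theorem gives $p_j^{\varphi(p_i-1)}\equiv1\pmod{p_i-1}$. It then suffices to know that $\varphi(p_i-1)\mid e_j$; granting this, $p_j^{e_j}\equiv1$ and hence $p_j^{k_je_j}\equiv1\pmod{p_i-1}$. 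Multiplying the $s$ resulting congruences gives $n\equiv1\pmod{p_i-1}$, and since $i$ was arbitrary, Theorem 2.4 shows that $n$ is a weak Carmichael number.

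The main obstacle is precisely the exponent bookkeeping in that last step. Euler's theorem delivers only $p_j^{\varphi(p_i-1)}\equiv1\pmod{p_i-1}$, so what one actually needs is that $e_j$ be divisible by $\varphi(p_i-1)$ for every $i\neq j$, that is, by ${\rm lcm}_{i\neq j}\varphi(p_i-1)$. I would check carefully that the stated definition of $e_j$ supplies exactly this divisibility; the point to watch is that merely having $p_i-1\mid e_j$ is \emph{not} sufficient, because the multiplicative order of $p_j$ modulo $p_i-1$ divides $\varphi(p_i-1)$ but need not divide $p_i-1$ itself. Once this divisibility is pinned down, the coprimality $\gcd(n,p_i-1)=1$ furnished by the hypothesis is what legitimizes the use of Euler's theorem and closes the argument.
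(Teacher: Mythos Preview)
Your approach via Theorem~2.4 and Euler's theorem is exactly what the paper intends; the proposition is stated there without a detailed proof, merely noted to be ``immediate by Euler totient theorem''. Your proof of the converse is clean and correct.

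For the forward direction, your caution at the end is not just prudent bookkeeping: it exposes a genuine flaw in the statement as written. With $e_j={\rm lcm}_{l\ne j}\varphi(p_l)={\rm lcm}_{l\ne j}(p_l-1)$, the divisibility $\varphi(p_i-1)\mid e_j$ that your argument requires can fail, and then so does the conclusion. Take $s=2$, $p_1=3$, $p_2=11$. The hypothesis holds ($2\not\equiv 0\pmod{11}$ and $10\not\equiv 0\pmod{3}$), and one computes $e_1=\varphi(11)=10$, $e_2=\varphi(3)=2$. With $k_1=k_2=1$ the candidate is $n=3^{10}\cdot 11^{2}=7144929$. But the order of $3$ modulo $10$ is $4$, which does not divide $e_1=10$; indeed $3^{10}\equiv 9\pmod{10}$, so $n\equiv 9\pmod{10}$ and $p_2-1=10\nmid n-1$. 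Thus $n$ is \emph{not} a weak Carmichael number.

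The statement almost certainly contains a typo. Comparing with Example~2.32 and Corollary~2.38 (where the exponents are $\varphi(q-1)$ and $\varphi(p-1)$), the intended definition is $e_i={\rm lcm}_{j\ne i}\varphi(p_j-1)$ rather than ${\rm lcm}_{j\ne i}\varphi(p_j)$. Under that reading, $\varphi(p_i-1)\mid e_j$ holds by construction for every $i\ne j$, Euler's theorem gives $p_j^{e_j}\equiv 1\pmod{p_i-1}$, and your argument closes exactly as you outlined. So your plan is correct; you simply detected an error in the proposition itself.
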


  \begin{definition}
Let $n\ge 3$ be any odd positive integer with a prime factorization
$n=p_1^{e_1}p_2^{e_2}\cdots p_s^{e_s}$.  
 Then the function $c_{w}(n)$ of $n$ is defined as  
   \begin{equation}
c_{w}(n)={\rm lcm}(p_1-1,p_2-1,\ldots,p_s-1).
  \end{equation}
 \end{definition}

From the above  definition, the definition of Carmichael function 
$\lambda(n)$ and its property that 
$p-1=\lambda(p)\mid \lambda(p^e)$ for any odd prime $p$ and 
$e\ge 2$, we immediately obtain the following result.
  \begin{proposition}
For each odd positive integer $n$, $c_{w}(n)\mid \lambda(n)$.
Therefore,  for such a $n$ we have $c_{w}(n)\le \lambda(n)$.
 \end{proposition}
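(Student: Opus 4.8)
The plan is to reduce everything to two standard properties of the Carmichael function: its value on prime powers and its behaviour under multiplication of coprime factors. Write $n=p_1^{e_1}p_2^{e_2}\cdots p_s^{e_s}$ with the $p_i$ distinct odd primes, as in Definition~2.7. The structural fact I would invoke first is that $\lambda$ is, up to taking least common multiples, multiplicative on coprime arguments; concretely, $\lambda(n)={\rm lcm}(\lambda(p_1^{e_1}),\ldots,\lambda(p_s^{e_s}))$. This is the natural entry point, because $c_w(n)$ is itself \emph{defined} as an ${\rm lcm}$ of the quantities $p_i-1$, so both sides of the asserted divisibility are least common multiples and can be compared factor by factor.

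Next I would show that each individual factor $p_i-1$ divides $\lambda(n)$. By the property recalled just before the statement, $p_i-1=\lambda(p_i)\mid \lambda(p_i^{e_i})$ for every $i$: when $e_i=1$ this is an equality, and when $e_i\ge 2$ it is exactly the cited divisibility $\lambda(p)\mid \lambda(p^e)$ for odd primes $p$. Combining this with $\lambda(p_i^{e_i})\mid \lambda(n)$, which is immediate from the ${\rm lcm}$-decomposition above, yields $p_i-1\mid \lambda(n)$ for each $i=1,2,\ldots,s$.

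Finally, since $c_w(n)={\rm lcm}(p_1-1,\ldots,p_s-1)$ is by definition the smallest positive integer divisible by every $p_i-1$, and we have just exhibited $\lambda(n)$ as a common multiple of all the $p_i-1$, the defining minimality property of the least common multiple forces $c_w(n)\mid \lambda(n)$. The concluding inequality $c_w(n)\le \lambda(n)$ is then immediate, a positive divisor of a positive integer never exceeding it.

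As for difficulty, there is essentially no serious obstacle here: the argument is a direct assembly of textbook facts. The only point deserving care is the coprime rule $\lambda(m_1m_2)={\rm lcm}(\lambda(m_1),\lambda(m_2))$ when $\gcd(m_1,m_2)=1$, on which the whole reduction rests. I would either cite it as a standard property of $\lambda$, or, for a self-contained treatment, recover it from the Chinese remainder isomorphism $\left(\Bbb Z/n\Bbb Z\right)^{*}\cong\prod_{i}\left(\Bbb Z/p_i^{e_i}\Bbb Z\right)^{*}$ together with the fact that the exponent of a finite direct product of abelian groups is the least common multiple of the exponents of the factors.
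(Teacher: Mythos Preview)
Your proof is correct and follows essentially the same approach as the paper, which simply notes that the result is immediate from the definition of $c_w(n)$, the definition of $\lambda(n)$ (in particular its ${\rm lcm}$-decomposition over prime-power factors), and the property $p-1=\lambda(p)\mid\lambda(p^e)$ for odd primes $p$. Your write-up just makes these standard steps explicit.
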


Using Euler totient theorem, Theorem 2.4 easily yields 
the following result which gives a possibility  for the construction of 
weak Carmichael numbers via Carmichael numbers.
  \begin{proposition}
Let $n=p_1p_2\cdots p_s$ be an arbitrary Carmichael number. 
For any fixed $i\in\{1,2,\ldots,s\}$ let $d_i$ be 
a smallest positive divisor  of 
$\varphi(c_w(n/p_i))$ with 
$c_w(n/p_i):=\prod_{1\le j\le l\atop j\not= i}(p_j-1)$,
such that $p_i^{d_i}\equiv 1(\bmod{\,c_w(n/p_i)})$. 
Then $np_i^{md_i}$ is a weak Carmichael number for every positive 
integer $m$. 
 \end{proposition}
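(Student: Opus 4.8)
The plan is to reduce everything to the Korselt-type criterion of Theorem 2.4. Set $N := n p_i^{m d_i}$. Since $n = p_1 p_2 \cdots p_s$ is squarefree, multiplying by a power of $p_i$ creates no new prime divisors, so the prime divisors of $N$ are exactly $p_1, p_2, \ldots, p_s$; moreover $N$ is visibly composite, a Carmichael number having at least three distinct prime factors. By Theorem 2.4 it therefore suffices to prove that $p_j - 1 \mid N - 1$, i.e.\ that $N \equiv 1 \pmod{p_j - 1}$, for every $j \in \{1, \ldots, s\}$.

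First I would extract two consequences of Korselt's criterion applied to the Carmichael number $n$. Since $n$ is squarefree with $p_j - 1 \mid n - 1$ for all $j$, we have $n \equiv 1 \pmod{p_j - 1}$ for each $j$; and for $j \neq i$ the prime $p_i$ is coprime to $p_j - 1$, because $p_i \mid n$ while $p_j - 1 \mid n - 1$, and no prime can divide both $n$ and $n - 1$. Consequently $p_i$ is coprime to $c_w(n/p_i)$ (regardless of whether one reads $c_w(n/p_i)$ as the displayed product or as the lcm of Definition 2.7, each factor $p_j - 1$ with $j \neq i$ divides it and is coprime to $p_i$). Hence $p_i$ is a unit modulo $c_w(n/p_i)$, and Euler's totient theorem gives $p_i^{\varphi(c_w(n/p_i))} \equiv 1 \pmod{c_w(n/p_i)}$. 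Thus the multiplicative order of $p_i$ modulo $c_w(n/p_i)$ is a well-defined divisor of $\varphi(c_w(n/p_i))$, and it is precisely the integer $d_i$ of the statement; in particular $p_i^{d_i} \equiv 1 \pmod{c_w(n/p_i)}$, so that $p_i^{m d_i} \equiv 1 \pmod{c_w(n/p_i)}$ for every $m \geq 1$.

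Next I would split into the cases $j \neq i$ and $j = i$. For $j \neq i$, since $p_j - 1$ divides $c_w(n/p_i)$, the congruence $p_i^{m d_i} \equiv 1 \pmod{c_w(n/p_i)}$ descends to $p_i^{m d_i} \equiv 1 \pmod{p_j - 1}$; combining this with $n \equiv 1 \pmod{p_j - 1}$ yields $N = n p_i^{m d_i} \equiv 1 \pmod{p_j - 1}$. For $j = i$ the argument is even simpler and uses nothing about $d_i$: from $p_i \equiv 1 \pmod{p_i - 1}$ we get $p_i^{t} \equiv 1 \pmod{p_i - 1}$ for every $t \geq 0$, in particular $p_i^{m d_i} \equiv 1 \pmod{p_i - 1}$, and together with $n \equiv 1 \pmod{p_i - 1}$ this gives $N \equiv 1 \pmod{p_i - 1}$. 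Having verified $p_j - 1 \mid N - 1$ for all $j$, Theorem 2.4 concludes that $N$ is a weak Carmichael number.

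The only genuinely delicate point is the existence and interpretation of $d_i$: one must know that $p_i$ is invertible modulo $c_w(n/p_i)$ before speaking of its multiplicative order, and this is exactly where Korselt's criterion for $n$ enters, through the coprimality of $p_i$ and each $p_j - 1$. Everything else is routine manipulation of congruences modulo the various $p_j - 1$, with the case $j = i$ automatic because $p_i - 1 \mid p_i^{t} - 1$.
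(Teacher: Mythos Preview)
Your proof is correct and follows exactly the approach the paper indicates: the paper does not spell out a proof of this proposition but simply remarks that ``using Euler totient theorem, Theorem 2.4 easily yields'' the result, and your argument is a careful unfolding of precisely that---Korselt's criterion on $n$ gives $n\equiv 1\pmod{p_j-1}$ and the coprimality of $p_i$ with $c_w(n/p_i)$, Euler's theorem then guarantees the existence of $d_i$ and gives $p_i^{md_i}\equiv 1\pmod{p_j-1}$ for $j\neq i$, and Theorem 2.4 finishes. Your handling of the ambiguity between the product and the lcm reading of $c_w(n/p_i)$, and of the trivial case $j=i$, is appropriate.
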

 \begin{example} 
Consider the smallest Carmichael number $561=3\cdot 11\cdot 17$. 
Then $c_w(561/3)={\rm lcm}(10,16)=80$, 
$c_w(561/11)={\rm lcm}(2,16)=16$ and $c_w(561/17)={\rm lcm}(2,10)=10$,
 and $d_1=4$, $d_2=8$, and $d_3=4$ are smallest integers
 for which $3^{d_1}\equiv 1(\bmod{\,80})$, $11^{d_2}\equiv 1(\bmod{\,16})$
and $17^{d_3}\equiv 1(\bmod{\,10})$, respectively.
This by Proposition 2.9 shows that $3^{4m+1}\cdot 11\cdot 17$, 
$3\cdot 11^{8m+1}\cdot 17$ and $3\cdot 11\cdot 17^{4m+1}$
are  weak Carmichael numbers for every positive integer $m$ 
(the smallest such a number $3^5\cdot 11\cdot 17=45441$ occurs in 
Table 1 as a smallest weak Carmichael number  described in Proposition 2.9). 
Similarly, regarding related values $d_1$  for a 
smallest prime divisor of the  next  four Carmichael numbers
 $1105, 1729, 2465$ and $2821$ (see Table 1),
we respectively obtain the following associated sequences for weak Carmichael 
numbers: $5^{4m+1}\cdot 13\cdot 17$, 
$7^{6m+1}\cdot 13\cdot 19$, $5^{12m+1}\cdot 17\cdot 29$ and 
$7^{4m+1}\cdot 13\cdot 31$ with  $m\ge 1$.
 \end{example}
 \begin{remark} 
As noticed above, in 1939, Chernick \cite{ch} gave a simple method to obtain
Carmichael numbers with three prime factors.
The distribution of primes with three prime factors 
has been studied in 1997 by R. Balasubramanian and S.V. Nagaraj \cite{bn},
who showed that the number of such Carmichael numbers up to $x$ is at most 
$O(x^{5/(14+o(1))})$. If $n=pqr$ is a Carmichael number then we have $p-1=da$, 
$q-1=db$ and $r-1=dc$ where $a,b$ and $c$ are coprime and $dabc\mid n-1$.
The Chernick form $n=pqr=(6m+1)(12m+1)(18m+1)$ is a special 
case of the form 
$$
n=pqr=(am+1)(bm+1)(cm+1)
$$ 
with $a<b<c,$ where $a,b$ and $c$ are relatively  prime in pairs. 
Namely, the  case $a=1$, $b=2$, 
$c=3$, leading to $d\equiv 0(\bmod{\,6})$. We see that  most values 
$(amb,c)$ will lead to a possible congruence for $d$ modulo $abc$,
whose smallest solution may be expected to be of the same order as $abc$.
As shown in \cite[the congruence (5)]{ch}
in Ore's book \cite[Ch. 14]{or}, $m=m_0+tabc$ with 
$t=1,2,3,\ldots$, where $m_0$ is the solution to the 
linear congruence
  \begin{equation}
m_0(ab+ac+bc)\equiv -(a+b+c)\pmod{abc}.
 \end{equation}
Thus, for  given $a,b,c$ it is easy to find all allowable values 
of $m$. All that remains is to test the three components for primality 
for each allowable $m$. In this way a ``family" of Carmichael numbers 
is found corresponding to triplets $(a,b,c)$. In 
\cite[Section 5, Table 2]{du} 
H. Dubner reported that the counts of $(1,a,b)$ are about $64.4\%$
of the corresponding Carmichael numbers with three prime factors less 
than $10^n$ for a wide range of $n$. Moreover, the counts of
$(1,a,b)$ are about $2.2\%$ of such Carmichael numbers. 
 
However, it is not yet known whether there are infinitely many 
Carmichael numbers of Chernick form, although this would folow from the more 
general conjecture of Dickson \cite{di}.
In 2002 H. Dubner \cite{du} tabulated the counts of Carmichael numbers
 of Chernick form up to $10^n$ for each $n<42$.  
Up to $10^{12}$ and $10^{18}$ there are 
respectively 1000 and 35586  with three prime factors 
(see \cite[Table 2]{du}).
Between  these 1000 (resp. 35585) Carmichael numbers, 25 (resp. 783)
numbers correspond to the Chernick form with related triplets $(a,b,c)=(1,2,3)$ 
(see \cite[Table 1]{du}).
 \end{remark}
 \begin{example} Here we present a simple way for constructing 
 weak Carmicha-\
el numbers with four prime factors using the Chernick form
of product $(6m+1)(12m+1)(18m+1)$. Consider the extended Chernick product
in the form 
   \begin{equation}
C(m; d,l):=(6m+1)(12m+1)(18m+1)\left(\frac{36m}{d}+1\right)^l,
   \end{equation}
with $d\mid 36m$ and some $l\ge 1$. 
Then under the assumptions that $p=6m+1,q=12m+1$  and  $r=18m+1$
are primes, a routine calculation shows that $C(m; d,l)$ is a 
weak Carmichael numbers  
with four prime factors if and only if $w(m,d):=36m/d+1$ is a prime 
different from $p$, $q$ and $r$ such that 
$(36m/d+1)^l\equiv 1(\bmod{\,6m})$. In particular, 
for a given $m$, possible values $d=1,4,9,12,18,36$ 
respectively give the following values for   $s$: 
$36m+1, 9m+1, 4m+1, 2m+1, m+1$. For example, Chernick \cite[p. 271]{ch} 
observed that the integers $C(m):=(6m+1)(12m+1)(18m+1)$ are 
Carmichael numbers for $m\in\{1,6,35,45,51,55,56,100,121\}$. For a fixed $m\ge 1$, denote by 
$W_m$ the set of all odd primes in the set $\{36m/d+1:\, d\mid 36\}$. 
Then $W_1=\{3,5,7\}$,  $W_6=\{7,13,19,37,217\}$,  
$W_{35}=\{7,13,19\}$,  $W_{45}=\{71\}$,  $W_{51}=\{103\}$,  
$W_{55}=\Phi$,   $W_{56}=\{13,2017\}$,   
$W_{100}=\{401\}$ and  $W_{121}=\{4357\}$.   
Then every $w\in W_m$ for some $m\in\{1,6,35,45,51,55,56,100,121\}$
arise a set of weak Carmichael numbers of the form  $C(m; d,l)$ given by 
(2.6), where 
$l$ must satisfy the congruence $w^l\equiv 1(\bmod{\,6m})$.
For example, assuming $w=13\in W_{35}$, we arrived to the 
set of weak Carmichael numbers of the form $211\cdot 421\cdot 631\cdot 13^l$
with $l$ such that $13^l\equiv 1(\bmod{\,210})$. Using the fact that 
$\varphi(210)=48$,  we can easily verify that $l_0=4$ is the smallest 
value of $l$ satisfying the previous congruence. Consequently,
each integer of the form $211\cdot 421\cdot 631\cdot 13^{4u}$  
with $u=1,2,\ldots$ is a weak Carmichael number.
   \end{example}
   \begin{definition}
A weak Carmichael number which can be obtained from certain Carmichael number
in  the manner described in Proposition 2.9 is called 
  a {\it  Carmichael like number}.  
  \end{definition}
 \begin{remark}
From Table 1 we see that there exist many weak Carmichael numbers of the form 
$n=p_1\cdots p_{k-1}p_k^f$ with some $k\in\{3,4\}$ and $f\ge 2$,
which are not Carmichael numbers. For example, from Table 1 we see that 
$8625=3\cdot 5^3\cdot 23$ is the smallest such number, and 
the smallest such numbers with four distinct prime factors 
is $54145=5\cdot 7^2\cdot 13\cdot 17$.
  \end{remark}
In terms of the function $c_{w}(n)$, 
Theorem 2.4 can be reformulated as follows.

\vspace{1mm}

\noindent{\bf Theorem 2.4'.} {\it Let $n=p_1^{e_1}p_2^{e_2}\cdots p_s^{e_s}$ 
be a composite integer, where $p_1,p_2,\ldots ,p_s$ are odd distinct primes and 
$e_1,e_2,\ldots ,e_s$ are positive integers. 
Then $n$ is a weak Carmichael number 
if and only $c_{w}(n)\mid n-1$.}
\vspace{2mm}
 
As an immediate consequence of Theorem 2.4, we establish a surprising result 
that summing  all $\varphi(n)$ congruences $a^{n-1}\equiv 1(\bmod{\,n})$ 
over $1\le a\le n-1$ with $\gcd(a,n)=1$, we obtain the congruence 
which  characterizes Carmichael numbers under the assumption that $n$ 
is a square-free integer. 
  \begin{theorem}
Let $n>1$ be a square-free positive integer. Then $n$ is a 
Carmichael number if and only if 
  \begin{equation}
 \sum_{\gcd(k,n)=1\atop 1\le k\le n-1}k^{n-1}\equiv \varphi(n) \pmod{n},
 \end{equation}
 \end{theorem}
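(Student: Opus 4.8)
The plan is to derive the theorem as an essentially immediate consequence of Theorem 2.4 (the Korselt-type criterion for weak Carmichael numbers) together with Korselt's criterion itself, after the single observation that the congruence (2.7) is literally the defining congruence (2.2) of a weak Carmichael number. So the whole argument is a matter of chaining together already-established equivalences rather than any fresh computation.

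First I would dispatch the easy direction. Suppose $n$ is a Carmichael number. Then $n$ is composite and, by the very definition of a Carmichael number, $a^{n-1}\equiv 1\pmod{n}$ for every $a$ with $\gcd(a,n)=1$. Summing these $\varphi(n)$ congruences over all totatives $a$ of $n$ with $1\le a\le n-1$ gives $\sum_{\gcd(k,n)=1,\,1\le k\le n-1}k^{n-1}\equiv\varphi(n)\pmod{n}$, which is exactly (2.7). This direction is precisely the ``surprising'' summation of Fermat congruences advertised before the statement, and it uses neither square-freeness nor Theorem 2.4.

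For the converse I would proceed as follows. Assume $n>1$ is square-free, composite, and satisfies (2.7). By Definition 2.1 this says precisely that $n$ is a weak Carmichael number, so by Proposition 2.2 the integer $n$ is odd. Being odd and square-free, $n=p_1p_2\cdots p_s$ with $p_1,\dots,p_s$ distinct odd primes and every exponent equal to $1$, so Theorem 2.4 applies and yields $p_i-1\mid n-1$ for each $i=1,\dots,s$. Since $n$ is then a composite, odd, square-free integer satisfying $p-1\mid n-1$ for every prime $p\mid n$, Korselt's criterion (Definition 1.1) shows $n$ is a Carmichael number, completing the equivalence.

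I do not expect a real obstacle here, because all of the substance is imported from Theorem 2.4, whose proof is carried out elsewhere in the paper. The one point I would flag explicitly concerns the composite hypothesis: a prime $n$ is square-free and satisfies (2.7) as well, since by Fermat's little theorem each term is $k^{n-1}\equiv 1$, whence the sum is $\equiv n-1=\varphi(n)$, yet a prime is not a Carmichael number. Thus the reverse implication genuinely requires $n$ to be composite, and the theorem is best read as Corollary 1.7 specialized to square-free $n$; I would state the composite restriction rather than leave the prime case silently excluded.
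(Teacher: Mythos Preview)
Your proposal is correct and matches the paper's approach exactly: the paper presents Theorem 2.15 as ``an immediate consequence of Theorem 2.4'' together with Korselt's criterion, and the preamble explicitly describes the forward direction as summing the $\varphi(n)$ Fermat congruences. Two small remarks: the oddness of a weak Carmichael number is Proposition 2.3, not 2.2; and your closing observation about the missing ``composite'' hypothesis (primes satisfy (2.7) trivially) is a valid caveat that the paper leaves implicit.
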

Using the well known fact that every Carmichael number is square-free,
as a consequence of Theorem 2.15, we obtain the following simple characterization 
of Carmichael numbers.
 \begin{corollary} A composite positive integer $n$ is a Carmichael number if 
and only if the following conditions are satisfied.

$\,\, (i)$ $n$ is square-free and 

$(ii)$ $\displaystyle\sum_{\gcd(k,n)=1\atop 1\le k\le n-1}k^{n-1}\equiv 
\varphi(n) \pmod{n}$.
  \end{corollary}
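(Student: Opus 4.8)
The plan is to deduce this characterization immediately from Theorem 2.15, whose only discrepancy with the Corollary is that it presupposes squarefreeness rather than asserting it. Consequently the single additional ingredient I need is the classical fact that every Carmichael number is squarefree, which is already encoded in Korselt's criterion (Definition 1.1). With that fact in hand, the two conditions (i) and (ii) match up precisely with the hypothesis and the conclusion of Theorem 2.15, and the proof becomes a matter of assembling the implications in the correct order.

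For the forward implication I would assume that $n$ is a Carmichael number. By Korselt's criterion $n$ is squarefree, which is exactly condition (i). Having thus certified that $n$ lies in the class of squarefree integers, I may apply Theorem 2.15, whose ``only if'' direction yields the congruence
$$
\sum_{\gcd(k,n)=1\atop 1\le k\le n-1}k^{n-1}\equiv \varphi(n) \pmod{n},
$$
which is precisely condition (ii). For the reverse implication I would assume both (i) and (ii). Condition (i) places $n$ among the squarefree integers, so Theorem 2.15 is applicable; condition (ii) is verbatim the congruence appearing in that theorem, and therefore its ``if'' direction forces $n$ to be a Carmichael number. Combining the two implications gives the stated equivalence.

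There is essentially no analytic obstacle here, since all the genuine arithmetic content---the passage between the power-sum congruence and the divisibility conditions $p-1\mid n-1$---is already packaged inside Theorem 2.15 (ultimately Theorem 1.5 of Borwein and Wong). The only point requiring care is logical rather than computational: one must not invoke the ``if'' direction of Theorem 2.15 before securing squarefreeness, because the theorem's hypothesis explicitly requires it. Condition (i) is exactly what supplies that missing hypothesis, so the two directions interlock cleanly and no further estimation or case analysis is needed.
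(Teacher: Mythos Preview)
Your proof is correct and follows exactly the approach the paper takes: it obtains the corollary as an immediate consequence of Theorem 2.15 together with the classical fact (from Korselt's criterion) that every Carmichael number is squarefree. The paper states this derivation in one sentence without spelling out the two implications, but the content is identical to what you wrote.
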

Recall that the {\it M\"{o}bius $\mu$-function} is defined so that $\mu(1)=1$, 
$\mu(n)=(-1)^s$ if $n$ is a product of $s$ distinct 
primes, and $\mu(n)=0$ if $n$ is divisible by the square of a prime.
Then the following consequence of Theorem 2.4 gives a characterization of 
weak Carmichael numbers that are  not  Carmichael numbers. 
 \begin{corollary} An integer $n>1$ is a weak Carmichael number which 
is not a Carmichael number if and only if 
  $$
\sum_{\gcd(k,n)=1\atop 1\le k\le n-1}k^{n-1}\equiv \varphi(n)+
\mu(n) \pmod{n}.
  $$
 \end{corollary}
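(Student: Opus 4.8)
The plan is to read the biconditional through the identification (Corollary 2.16) of Carmichael numbers with the square-free weak Carmichael numbers. Under this identification, ``weak Carmichael but not Carmichael'' means precisely ``weak Carmichael and not square-free,'' and every non-square-free integer has $\mu(n)=0$. I would therefore organize the proof around the value of $\mu(n)$ and around whether $n$ is square-free, writing $n=p_1^{e_1}\cdots p_s^{e_s}$ throughout.

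First I would dispose of the two easy halves. For the forward implication, if $n$ is a weak Carmichael number then Definition 2.1 gives the congruence with $\varphi(n)$ on the right and tells us $n$ is composite; if moreover $n$ is not Carmichael, then $n$ is not square-free (else Corollary 2.16 would make it Carmichael), so $\mu(n)=0$ and the congruence becomes $\sum\equiv\varphi(n)+\mu(n)\pmod n$. For the corresponding half of the converse, I would assume the displayed congruence holds with $n$ not square-free: then $\mu(n)=0$, so $\sum\equiv\varphi(n)\pmod n$, and since any $n>1$ divisible by a prime square is composite, Definition 2.1 makes $n$ a weak Carmichael number that fails to be Carmichael (Carmichael numbers being square-free).

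The remaining and only delicate point---the main obstacle---is to show that no square-free $n>1$ can satisfy the congruence, since Theorem 2.4 controls only when $\sum\equiv\varphi(n)$ and says nothing about the exact residue of $\sum-\varphi(n)$ modulo $n$. Here I would compute the sum modulo each prime $p_i\mid n$ by the Chinese remainder theorem: as $k$ ranges over the totatives of $n$, its reduction mod $p_i$ takes each value in $\{1,\dots,p_i-1\}$ exactly $\prod_{j\ne i}(p_j-1)$ times, whence
$$\sum_{\gcd(k,n)=1,\,1\le k\le n-1}k^{n-1}\equiv\Big(\prod_{j\ne i}(p_j-1)\Big)\sum_{a=1}^{p_i-1}a^{\,n-1}\pmod{p_i}.$$
Invoking the classical fact that $\sum_{a=1}^{p-1}a^{m}\equiv-1\pmod p$ when $(p-1)\mid m$ and $\equiv0$ otherwise, and noting $\varphi(n)=(p_i-1)\prod_{j\ne i}(p_j-1)\equiv-\prod_{j\ne i}(p_j-1)\pmod{p_i}$, the per-prime version of $\sum\equiv\varphi(n)+\mu(n)$ collapses to $p_i\mid(-1)^{s+1}$ when $(p_i-1)\mid(n-1)$, and to $\prod_{j\ne i}(p_j-1)\equiv(-1)^s\pmod{p_i}$ otherwise.

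Finally I would push these two alternatives to a contradiction. The first is impossible, so the congruence could hold only if $(p_i-1)\nmid(n-1)$ for all $i$ together with $\prod_{j\ne i}(p_j-1)\equiv(-1)^s\pmod{p_i}$ for all $i$; multiplying the latter by $p_i-1\equiv-1\pmod{p_i}$ gives $\varphi(n)\equiv(-1)^{s+1}\pmod{p_i}$, and combining over all prime divisors yields $\varphi(n)\equiv(-1)^{s+1}\pmod n$. Because $0<\varphi(n)<n$, this forces $\varphi(n)=1$ or $\varphi(n)=n-1$, i.e. $n\in\{1,2\}$ or $n$ prime, none of which is square-free and composite; and a prime $n$ is already excluded, since then $p_1-1=n-1$ lands in the impossible first alternative. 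Thus the congruence fails for every square-free $n>1$, which closes the converse.
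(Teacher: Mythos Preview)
Your proof is correct and follows the same overall outline as the paper's: the forward implication and the non-square-free half of the converse are handled identically (via $\mu(n)=0$ and Definition~2.1/Corollary~2.16), and all the work is in ruling out square-free $n>1$. The difference lies in how that square-free case is closed. The paper selects a single prime $p\mid n$ with $p-1\nmid n-1$ (such a prime exists by Theorem~2.4, since a square-free $n$ satisfying $\sum\equiv\varphi(n)\pm1$ cannot be weak Carmichael) and tries to reach a contradiction modulo that one prime. You instead reduce modulo every $p_i$, split into the two alternatives $(p_i-1)\mid(n-1)$ and $(p_i-1)\nmid(n-1)$, and when no prime falls in the first (directly contradictory) alternative you assemble the congruences $\varphi(n)\equiv(-1)^{s+1}\pmod{p_i}$ into a global constraint forcing $\varphi(n)\in\{1,n-1\}$.

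Your route is in fact the sounder one here. The paper's step invoking Fermat's little theorem to obtain $\sum_{k\in R(p^e)}k^{n-1}\equiv|R(p^e)|\pmod p$ actually requires $p-1\mid n-1$, the \emph{opposite} of the hypothesis placed on the chosen prime; and one cannot simply switch to a prime with $p-1\mid n-1$, since square-free composites such as $n=35$ have no such prime divisor. Your two-alternative analysis, finished off by the $\varphi(n)\equiv(-1)^{s+1}\pmod n$ argument, is exactly what is needed to cover that situation.
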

Theorem 2.4 immediately gives the following result which was 
also observed in \cite{bw}, and also directly proved in \cite[Lemma 1]{gm3}. 
 \begin{proposition} 
Every power $p^e$ of any odd prime $p$ with $e\ge 2$ is a weak 
Carmichael number.
 \end{proposition}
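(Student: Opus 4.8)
The plan is to invoke Theorem 2.4 directly, since $n = p^e$ with $e \ge 2$ is a composite integer whose only prime divisor is the odd prime $p$. By Theorem 2.4, such an $n$ is a weak Carmichael number if and only if $p - 1 \mid n - 1 = p^e - 1$, so the entire task collapses to verifying this one divisibility.

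First I would record the standard factorization
$$
p^e - 1 = (p-1)\left(p^{e-1} + p^{e-2} + \cdots + p + 1\right),
$$
which exhibits $p - 1$ as a divisor of $p^e - 1$ for every $e \ge 1$. Equivalently, from $p \equiv 1 \pmod{p-1}$ one gets $p^e \equiv 1 \pmod{p-1}$, whence $p-1 \mid p^e - 1$. The condition $e \ge 2$ is used only to guarantee that $p^e$ is genuinely composite, so that the hypotheses of Theorem 2.4 are satisfied with $s = 1$, $p_1 = p$, and $e_1 = e$.

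Applying Theorem 2.4 then immediately yields that $p^e$ is a weak Carmichael number. Alternatively, one may route the argument through Theorem 2.4': here $c_{w}(p^e) = p - 1$, and the displayed factorization gives $c_{w}(p^e) \mid p^e - 1$, which is exactly the criterion of Theorem 2.4'. There is no genuine obstacle in this proof; the only observation worth making is that a prime power has a single prime divisor, so the Korselt-type condition degenerates to the elementary divisibility $p - 1 \mid p^e - 1$, which holds unconditionally. This also explains why the result was already noted without proof in \cite{bw} and \cite{gm3}.
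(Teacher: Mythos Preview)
Your proof is correct and matches the paper's approach exactly: the paper states that Proposition 2.18 follows immediately from Theorem 2.4, and your verification that $p-1 \mid p^e - 1$ via the factorization $p^e - 1 = (p-1)(p^{e-1}+\cdots+1)$ is precisely the one-line check needed.
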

   \begin{corollary}
If $n$ is a weak Carmichael number, then every power 
$n^e$ of $n$ with $e=2,3,\ldots$ is also a weak Carmichael number.
In particular, such a power of any Carmichael number is also 
a weak Carmichael number.
     \end{corollary}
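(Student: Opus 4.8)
The plan is to reduce the claim to the Korselt-type characterization of weak Carmichael numbers in Theorem 2.4, so that it becomes a purely elementary divisibility statement. First I would record that $n$ and $n^e$ have exactly the same set of prime divisors. Write the prime factorization $n = p_1^{e_1}\cdots p_s^{e_s}$, where the $p_i$ are distinct odd primes (odd because every weak Carmichael number is odd by Proposition 2.3) and $s\ge 1$. Then $n^e = p_1^{ee_1}\cdots p_s^{ee_s}$ has the same underlying primes $p_1,\dots,p_s$. Moreover, for $e\ge 2$ the integer $n^e$ is composite and odd, so Theorem 2.4 is applicable to both $n$ and $n^e$.

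By Theorem 2.4, the hypothesis that $n$ is a weak Carmichael number is equivalent to $p_i - 1 \mid n - 1$ for every $i=1,\dots,s$, while the desired conclusion that $n^e$ is a weak Carmichael number is equivalent to $p_i - 1 \mid n^e - 1$ for every $i$. It therefore suffices to prove the single implication: if $p_i - 1 \mid n - 1$, then $p_i - 1 \mid n^e - 1$.

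The crux is the elementary identity $n^e - 1 = (n-1)(n^{e-1} + n^{e-2} + \cdots + n + 1)$, which gives $n - 1 \mid n^e - 1$. Hence $p_i - 1 \mid n - 1 \mid n^e - 1$, and transitivity of divisibility completes the argument for each $i$. Applying Theorem 2.4 in the reverse direction then yields that $n^e$ is a weak Carmichael number.

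For the final assertion I would invoke the fact, recorded just after Definition 2.1 as a consequence of Korselt's criterion, that every Carmichael number is a weak Carmichael number; the preceding paragraph applied to such an $n$ then gives the ``in particular'' statement. I do not anticipate a genuine obstacle: once Theorem 2.4 is in hand, the entire content is the observation that raising $n$ to a power $e\ge 2$ neither introduces new prime divisors nor destroys the divisibilities $p_i-1\mid n-1$, since $n-1\mid n^e-1$. The only point deserving a moment's care is verifying that $n^e$ still meets the standing hypotheses of Theorem 2.4 (composite, odd, with distinct odd prime factors), and this is immediate.
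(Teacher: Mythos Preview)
Your proof is correct and follows precisely the route the paper intends: the corollary is stated immediately after Theorem~2.4 (and Proposition~2.18) with no separate proof, because it is meant to drop out of that criterion together with the observation $n-1\mid n^e-1$, which is exactly what you do. There is nothing to add.
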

Theorem 2.4 and the well known fact that every Carmichael number has at least 
three distinct prime factors imply the following result. 
   \begin{corollary}
Let $n=pq$ be a product of distinct odd primes $p$ and $q$. 
Then $n$ is not a weak Carmichael number.
    \end{corollary}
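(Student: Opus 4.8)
The plan is to reduce the statement to the Korselt-type criterion of Theorem 2.4 and then verify by a one-line divisibility computation that the required condition cannot hold when $n$ has exactly two prime factors. By Theorem 2.4, the integer $n=pq$ (a product of two distinct odd primes) is a weak Carmichael number if and only if both $p-1\mid n-1$ and $q-1\mid n-1$. So it suffices to show that at least one of these divisibilities fails.

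Without loss of generality assume $p<q$. First I would rewrite $n-1$ modulo $q-1$: since
\[
n-1=pq-1=p(q-1)+(p-1),
\]
we obtain $n-1\equiv p-1\pmod{q-1}$. Because $p$ and $q$ are odd primes with $p<q$, the integer $p-1$ satisfies $2\le p-1<q-1$; in particular $p-1$ is a positive integer strictly smaller than $q-1$, so it is not divisible by $q-1$. Hence $q-1\nmid n-1$, the criterion of Theorem 2.4 fails for the prime $q$, and $n$ cannot be a weak Carmichael number.

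I expect there to be no real obstacle here: once Theorem 2.4 is in hand the argument is an elementary divisibility check, and the only point requiring (minor) care is the strict inequality $p-1<q-1$, which rests on $p\neq q$ together with the fact that the smaller prime is at least $3$. As an alternative route matching the remark preceding the statement, one could instead observe that if both $p-1\mid n-1$ and $q-1\mid n-1$ held, then by Korselt's criterion the square-free number $n=pq$ would itself be a Carmichael number; but every Carmichael number has at least three distinct prime factors, a contradiction. The direct computation above is shorter and avoids appealing to that structural fact, so I would present it as the main proof.
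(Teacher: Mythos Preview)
Your proof is correct. It matches the brief derivation the paper sketches immediately before stating the corollary (Theorem~2.4 together with the classical fact that a Carmichael number has at least three prime factors), except that you replace the appeal to that structural fact by the explicit residue computation $n-1=p(q-1)+(p-1)\equiv p-1\pmod{q-1}$, which is cleaner and self-contained.

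The paper's designated ``direct proof'' of Corollary~2.20 in Section~3, however, takes a genuinely different route: it does \emph{not} invoke Theorem~2.4 at all. Instead, writing $q<p$, it evaluates the power sum $S(p,q)=\sum_{\gcd(k,pq)=1}k^{pq-1}$ modulo the larger prime $p$ via Fermat's little theorem and the vanishing of $\sum_{i=1}^{p-1}i^{q-1}\pmod p$ (valid because $p-1\nmid q-1$), obtaining $S(p,q)\equiv 0\pmod p$; since $\varphi(pq)\equiv 1-q\not\equiv 0\pmod p$, the defining congruence fails. Your approach is shorter and more transparent once Theorem~2.4 is available; the paper's Section~3 argument is valuable precisely because it is independent of Theorem~2.4 and illustrates the sum-of-powers machinery directly.
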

\begin{remark} Recall that in Section 3 we give a direct proof 
of Corollary 2.20.
\end{remark}

Proposition 2.18 shows that  weak Carmichael numbers
appear to be more numerous than the Carmichael numbers,
which can be expressed as follows.
  \begin{corollary} 
Let $C(x)$ and  $C_w(x)$ be the numbers of Carmichael numbers and weak 
Carmichael numbers in the interval $[1,x]$, respectively. Then 
  $$
\lim_{x\to\infty}(C_w(x)-C(x))=+\infty.
  $$
 \end{corollary}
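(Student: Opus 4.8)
The plan is to prove Corollary 2.22 by producing infinitely many weak Carmichael numbers that are provably \emph{not} Carmichael numbers, so that the difference $C_w(x)-C(x)$ grows without bound. The cleanest source of such numbers is Proposition 2.18: every power $p^e$ of an odd prime $p$ with $e\ge 2$ is a weak Carmichael number. Since any number of the form $p^e$ with $e\ge 2$ is not square-free, and every Carmichael number is square-free, none of these prime powers is a Carmichael number. Thus they all contribute to $C_w(x)$ but never to $C(x)$.

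First I would fix a single odd prime, say $p=3$, and consider the infinite family $\{3^e : e\ge 2\} = \{9,27,81,\dots\}$. By Proposition 2.18 each member is a weak Carmichael number, and by the square-freeness obstruction each member fails to be a Carmichael number. Next I would count how many such numbers lie in $[1,x]$: the condition $3^e\le x$ with $e\ge 2$ gives roughly $\log_3 x - 1$ admissible exponents, so the count is at least $\lfloor \log_3 x\rfloor - 1$. Since each of these is counted by $C_w(x)$ but by none of them is counted by $C(x)$, we obtain the pointwise inequality
\begin{equation*}
C_w(x)-C(x)\ \ge\ \#\{e\ge 2: 3^e\le x\}\ \ge\ \lfloor \log_3 x\rfloor - 1.
\end{equation*}
Letting $x\to\infty$, the right-hand side tends to $+\infty$, which yields the claimed limit.

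I expect no serious obstacle here: the argument is essentially a counting observation built on top of Proposition 2.18 and the standard fact (already invoked repeatedly in the excerpt) that Carmichael numbers are square-free. The only point requiring a moment's care is to make explicit that the prime powers $3^e$ are all distinct and all genuinely composite, so that they are legitimately counted by $C_w$ and genuinely excluded from $C$; both are immediate. If one wanted a stronger quantitative statement one could sum over several primes $p$ to get a lower bound of order $\sum_p \log_p x$, but for the stated limit the single family $\{3^e\}$ already suffices.
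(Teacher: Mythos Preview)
Your proof is correct and follows essentially the same approach as the paper: the paper derives Corollary 2.22 directly from Proposition 2.18, observing that the odd prime powers $p^e$ with $e\ge 2$ are weak Carmichael numbers but, being non-square-free, are never Carmichael numbers. Your version is simply a more explicit rendering of this, specializing to $p=3$ and writing down the logarithmic lower bound for $C_w(x)-C(x)$.
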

  \begin{remark}
Obviously, Corollary 2.17 may be very significant for compuational search
of Carmichael numbers. Namely, in order to examine whether 
a given non-square positive integer $n$ is a Carmichael number, it
is sufficient to verify only one congruence modulo $n$.
However, for related faster compuations may be useful the following 
charaterization  of Carmichael numbers which immediately 
follows from Corollary 2.17 and the fact that 
$\varphi(p_1p_2\ldots p_k)=(p_1-1)(p_2-2)\cdots (p_k-1)$.
  \end{remark}

  \begin{corollary}
Let $n=p_1p_2\cdots p_s$ be a composite positive integer, where 
$p_1,p_2,\ldots , p_s$ are distinct primes. Let $l_i$ be residues
of $n-1$ modulo $p_i$ with $i=1,2,\ldots,s$.
Then $n$ is a  Carmichael number if 
and only if  the following $k$ congruences are satified:
 \begin{equation}
 \sum_{\gcd(k,n)=1\atop 1\le k\le l-1}k^{l_i}\equiv 
-\frac{1}{p_i-1}\prod_{j=1}^{s}(p_j-1)\pmod{p_i}, \quad i=1,2,\ldots , s.
  \end{equation}
  \end{corollary}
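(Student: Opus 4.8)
The plan is to derive the $s$ congruences in \eqref{} by combining Corollary 2.17 with the Chinese Remainder Theorem. By Corollary 2.17, the square-free number $n=p_1p_2\cdots p_s$ is a Carmichael number if and only if it is square-free (which it is, being a product of distinct primes) and
$$
\sum_{\gcd(k,n)=1\atop 1\le k\le n-1}k^{n-1}\equiv \varphi(n)\pmod{n}.
$$
Since $\varphi(n)=\prod_{j=1}^s(p_j-1)$, the idea is to show that this single congruence modulo $n$ is equivalent to the displayed system of $s$ congruences, one modulo each prime $p_i$. The congruence modulo $n$ holds if and only if it holds modulo every prime power dividing $n$; since $n$ is square-free, this means modulo each $p_i$. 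So the whole task reduces to analyzing the left-hand sum modulo a single prime $p_i$.

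First I would fix an index $i$ and study the coprime power sum $\sum_{\gcd(k,n)=1,\,1\le k\le n-1}k^{n-1}$ modulo $p_i$. The natural step is to reduce the exponent: by Fermat's little theorem, for any $k$ coprime to $p_i$ we have $k^{p_i-1}\equiv 1\pmod{p_i}$, so $k^{n-1}\equiv k^{l_i}\pmod{p_i}$, where $l_i$ is the residue of $n-1$ modulo $p_i-1$ (note the statement writes ``modulo $p_i$'' but the genuinely relevant reduction is of the exponent modulo $p_i-1$; I would reconcile this with the intended meaning of $l_i$). Next I would organize the range $1\le k\le n-1$ by residue classes modulo $p_i$. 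Because $n/p_i=\prod_{j\ne i}p_j$ is coprime to $p_i$, as $k$ runs over integers coprime to $n$ in $[1,n-1]$, its residue modulo $p_i$ runs uniformly over the nonzero classes, each attained exactly $\varphi(n)/(p_i-1)=\prod_{j\ne i}(p_j-1)$ times. This is the key counting step: summing $k^{l_i}$ over all coprime $k$ modulo $p_i$ equals $\bigl(\prod_{j\ne i}(p_j-1)\bigr)\sum_{a=1}^{p_i-1}a^{l_i}\pmod{p_i}$.

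The decisive computation is then the classical evaluation of $\sum_{a=1}^{p_i-1}a^{l_i}\pmod{p_i}$, which equals $-1$ when $(p_i-1)\mid l_i$ and $0$ otherwise. Matching the right-hand side $\varphi(n)=\prod_j(p_j-1)$ against this, the congruence modulo $p_i$ becomes exactly the stated
$$
\sum a^{l_i}\equiv -\frac{1}{p_i-1}\prod_{j=1}^{s}(p_j-1)\pmod{p_i},
$$
after dividing the counting factor $\prod_{j\ne i}(p_j-1)$ through; the factor $1/(p_i-1)$ on the right is simply the inverse of $p_i-1$ modulo $p_i$, which exists since $p_i-1\equiv -1\pmod{p_i}$ is a unit. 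I would carry out this matching in both directions to get the ``if and only if.''

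The main obstacle I anticipate is purely bookkeeping rather than conceptual: keeping the two distinct reductions straight (the exponent is reduced modulo $p_i-1$ to define $l_i$, while the sum itself is read modulo $p_i$), correctly justifying the uniform distribution of coprime residues across classes mod $p_i$ via the coprimality of $n/p_i$ with $p_i$, and verifying that the multiplicities and the factor $1/(p_i-1)$ in the statement are consistent with the evaluation $\sum_{a}a^{l_i}\equiv -[(p_i-1)\mid l_i]\pmod{p_i}$. Everything else is a direct translation of the modulus-$n$ congruence in Corollary 2.17 into its componentwise form through the Chinese Remainder Theorem.
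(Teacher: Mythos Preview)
Your approach is correct and matches the paper's: the paper states (Remark 2.23) that this corollary ``immediately follows from Corollary 2.17 and the fact that $\varphi(p_1p_2\cdots p_s)=(p_1-1)(p_2-1)\cdots(p_s-1)$,'' and your plan---split the single congruence modulo $n$ into its components modulo each $p_i$ via the Chinese Remainder Theorem, reduce the exponent by Fermat, and use the uniform distribution of coprime residues (which is precisely Lemma 4.2/4.4 of the paper specialized to $e_i=1$)---is exactly the intended derivation. You are also right to flag the typos in the statement: $l_i$ should be the residue of $n-1$ modulo $p_i-1$, and the summation range ``$1\le k\le l-1$'' should read ``$1\le k\le n-1$.''
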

 Proposition 2.6 motivates the following definition.
  \begin{definition}
A weak Carmichael number $n$ is called a {\it primitive weak Carmichael 
number} if $n\not= m^f$ for every weak Carmichael numbers $m$
and all integers $f\ge 2$.
 \end{definition}
  \begin{remark}
Clearly, each weak Carmichael number which is not a power of some 
integer is also a primitive weak Carmichael number. In particular,
this is true for all  Carmichael numbers. However, 
there are  primitive weak Carmichael numbers which are powers of 
some integers. For example, Corollary 2.20 implies 
that  every  weak Carmichael number of the form $p^2q^2$ 
is a  primitive weak Carmichael number. From Table 1 we read  the 
following perfect squares of product of distinct primes: 
225, 1225,  8281 and 14161. We also see from Table 1 
that the numbers $2025=3^4\cdot 5^2$, $18225=3^6\cdot 5^2$ 
are  primitive weak Carmichael numbers, but 
2025 is not a primitive weak Carmichael number
(in view of the fact that its square root 45 is a weak Carmichael number).
Table 1 also shows that there are primitive Carmichael numbers which are
aquare of non-square integers (for example, 
$1071225=(3^2\cdot 5\cdot 23)^2$).

Furthermore, in view of Definition 2.25, Proposition 2.18 we have the 
following result.
 \end{remark}
   \begin{corollary}
If $p$ is an odd prime, then $p^f$ is a primitive weak Carmichael number
if and only if $f$ is a prime.
    \end{corollary}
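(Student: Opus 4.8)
The plan is to combine Proposition 2.18 (every power $p^e$ of an odd prime $p$ with $e\ge 2$ is a weak Carmichael number) with the unique factorisation of a prime power, and to treat the two implications separately. Throughout, I note that $p^f$ can be a weak Carmichael number only when it is composite, so the claim is implicitly about exponents $f\ge 2$.

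For the ``only if'' direction I would argue by contraposition: assuming $f$ is not prime, I show that $p^f$ is not a primitive weak Carmichael number. If $f=1$, then $p^f=p$ is prime, hence not composite and not a weak Carmichael number at all. If $f$ is composite, I write $f=ab$ with $a,b\ge 2$, so that $p^f=(p^a)^b$; here $p^a$ is a weak Carmichael number by Proposition 2.18 (as $a\ge 2$ and $p$ is odd) and $b\ge 2$. By Definition 2.25 this exhibits $p^f$ as $m^b$ with $m=p^a$ a weak Carmichael number and $b\ge 2$, so $p^f$ fails to be primitive.

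For the ``if'' direction, I assume $f$ is prime. By Proposition 2.18 the number $p^f$ is a weak Carmichael number. Suppose, for contradiction, that it is not primitive, so $p^f=m^g$ for some weak Carmichael number $m$ and some integer $g\ge 2$. Since $p^f$ is a power of the single prime $p$, unique factorisation forces $m=p^c$ for some $c\ge 1$, whence $cg=f$. Because $f$ is prime and $g\ge 2$ divides $f$, the only possibility is $g=f$ and $c=1$, giving $m=p$. But $p$ is prime, hence not composite, so $p$ is not a weak Carmichael number, contradicting the choice of $m$. Therefore no such representation exists and $p^f$ is a primitive weak Carmichael number.

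The argument is essentially bookkeeping once Proposition 2.18 is in hand; the one point that I would flag as the crux is the constraint hidden in Definition 2.25 that the base $m$ must itself be a weak Carmichael number, and in particular composite. It is precisely this compositeness requirement that rules out the degenerate factorisation $p^f=p^f$ (with $m=p$ and $g=f$) and thereby makes primality of $f$ both necessary and sufficient.
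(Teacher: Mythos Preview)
Your proof is correct and follows exactly the approach the paper intends: the corollary is stated as an immediate consequence of Definition 2.25 and Proposition 2.18, and you have simply written out the unique-factorisation bookkeeping that makes this immediate. There is nothing to add.
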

The facts that there are infinitely many Carmichael numbers 
and that every Carmichael number is a weak Carmichael number
yield the following result.
   \begin{corollary}
There are infinitely many primitive weak Carmichael numbers
which are not  prime powers.
    \end{corollary}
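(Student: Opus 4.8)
The plan is to show that every Carmichael number is itself a primitive weak Carmichael number that is not a prime power, and then to appeal to the infinitude of Carmichael numbers. First I would recall two standard facts about an arbitrary Carmichael number $n$: by Korselt's criterion (Definition 1.1), $n$ is square-free, and, as noted in the introduction, $n$ is a product of at least three distinct primes. The latter fact alone shows that $n$ is not a prime power.

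Next I would verify that every Carmichael number $n$ is a \emph{primitive} weak Carmichael number. That $n$ is a weak Carmichael number is the remark immediately following Definition 2.1. For primitivity, suppose toward a contradiction that $n=m^f$ for some integer $m$ and some integer $f\ge 2$. Then every prime dividing $m$ divides $n$ to a power at least $f\ge 2$, so $n$ is divisible by the square of a prime, contradicting the square-freeness of $n$. Hence $n$ admits no representation $n=m^f$ with $f\ge 2$ for any integer $m$ whatsoever, and a fortiori for no weak Carmichael number $m$; by Definition 2.25 this means $n$ is a primitive weak Carmichael number.

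Finally, combining these observations, every Carmichael number is a primitive weak Carmichael number that is not a prime power. Since, by the theorem of Alford, Granville and Pomerance \cite{agp}, there are infinitely many Carmichael numbers, the set of primitive weak Carmichael numbers that are not prime powers is infinite, as claimed.

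I do not expect any genuine obstacle here; the one point to keep in mind is that primitivity as stated in Definition 2.25 only forbids representations $n=m^f$ with $m$ a weak Carmichael number, but square-freeness rules out \emph{every} such representation with $f\ge 2$, which is strictly stronger than what is needed.
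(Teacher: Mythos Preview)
Your proof is correct and follows essentially the same approach as the paper: the paper derives the corollary from the infinitude of Carmichael numbers together with the observation (made in Remark~2.26) that every Carmichael number, being square-free with at least three prime factors, is a primitive weak Carmichael number that is not a prime power. Your argument makes the primitivity step more explicit than the paper does, but the route is the same.
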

Applying the congruence (2.2), we find via {\tt Mathematica} 8 the following 
table of weak Carmichael numbers up to $25000$ and their 
factorizations. In this table Carmichael numbers are written in boldface,
while prime powers are written in italic face.
The notion of indices of weak Carmichael numbers 
which are less than 30000, given in Table 2, are described in Subsection 2.6.
  \vspace{2mm}
\begin{center}
{\bf Table 1.} Weak Carmichael numbers up to $25000$
{\small
\begin{tabular}{|c|c|c|c|}\hline
${\it 9 = 3^2}$ & ${\it  25=5^2}$ &  ${\it  27=3^3}$ & $45=3^2\cdot 5$\\
 ${\it  49=7^2}$ & ${\it 81=3^4}$ & ${\it 121 =11^2}$ & ${\it 125 =5^3}$ \\
   ${\it 169=13^2}$ &  $225= 3^2\cdot 5^2$ & ${\it  243 =3^5}$ & 
${\it 289 =17^2}$\\
  $325 =5^2 \cdot13 $ &  ${\it 343= 7^3}$&  ${\it 361= 19^2}$&
$405 =3^4\cdot 5$\\
 ${\it 529 = 23^2}$&  ${\bf 561 =3 \cdot 11 \cdot 17}$&
  ${\it 625= 5^4}$&  $637=7^2\cdot 13$ \\
${\it 729 =3^6}$ & ${\it 841 =29^2}$ &  $891 = 3^4\cdot 11 $ &  
${\it 961= 31^2}$\\
  ${\bf 1105= 5\cdot13\cdot 17}$ & $1125=3^2\cdot 5^3$ & 
$1225 =5^2\cdot 7^2$& ${\it 1331 =11^3}$\\
  ${\it 1369=37^2} $&  $1377=3^4\cdot 17 $ & ${\it 1681=41^2}$ & 
${\bf 1729 =7\cdot 13\cdot 19}$\\
 ${\it 1849 =43^2}$& $2025=3^4 \cdot 5^2 $&  ${\it 2187= 3^7}$ & ${\it 2197=13^3}$ \\ 
${\it 2209=47^2}$ & ${\it 2401=7^4}$ & ${\bf 2465=5\cdot 17\cdot 29}$
& ${\it 2809=53^2}$\\
  ${\bf 2821=7\cdot 13\cdot 31}$ & 
${\it 3125=5^5}$& $3321=3^4\cdot 41$ &  ${\it 3481=59^2}$\\
 $3645=3^6\cdot 5$ & ${\it 3721=61^2}$&  $3751=11^2\cdot 31$ & 
$3825=3^2\cdot 5^2\cdot 17$ \\
 $4225=5^2\cdot 13^2$&  ${\it 4489=67^2}$ & ${\it 4913=17^3}$ & 
$4961=11^2\cdot  41$\\
  ${\it 5041=71^2}$ & ${\it 5329=73^2}$ & $5589=3^5\cdot 23$ 
& $5625=3^2\cdot 5^4$\\
  ${\it 6241=79^2}$ & $6517=7^3\cdot 19$ & $6525=3^2\cdot 5^2\cdot 29$& 
${\it 6561=3^8}$\\
 ${\bf 6601=7\cdot 23\cdot 41}$ & ${\it 6859=19^3}$ & 
${\it 6889=83^2}$ & $7381=11^2\cdot 61$\\
 ${\it 7921=89^2}$ & $8125=5^4\cdot 13$ & $8281=7^2\cdot 13^2$ & $8625=3\cdot
5^3\cdot 23$\\
 ${\bf 8911=7\cdot 19\cdot 67}$ & ${\it 9409=97^2}$ & $9801=3^4\cdot 11^2$ & 
$10125=3^4\cdot 5^3$\\
 ${\it 10201=101^2}$ & ${\bf 10585=5\cdot 29\cdot 73}$ & 
${\it 10609=103^2}$ &  $10625=5^4\cdot 17$ \\
 ${\it 11449=107^2}$ & ${\it 11881=109^2}$ & 
$12025=5^2\cdot 13\cdot 37$ & ${\it 12167=23^3}$ \\
 ${\it 12769=113^2}$ & $13357=19^2\cdot 37$ & 
$13833=3^2\cdot 29\cdot 53$& $14161=7^2\cdot 17^2$ \\
 ${\it 14641=11^4}$ & ${\it 15625=5^6}$ & ${\bf 15841=7\cdot 31\cdot 73}$ &
 $15925=5^2\cdot 7^2\cdot 13$\\
 ${\it 16129=127^2}$ &  ${\it 16807=7^5}$ & 
${\it 17161=131^2}$ & $18225=3^6\cdot 5^2$\\ 
 ${\it 18769=137^2}$ & ${\it 19321=139^2}$ & 
${\it 19683=3^9}$ & $21141=3^6\cdot 29$ \\
${\it 22201=149^2}$  & ${\it 22801=151^2}$ & $23409=3^2\cdot 5\cdot 23^2$ 
&$23805=3^2\cdot 5\cdot 23^2$\\
 ${\it 24389=29^3}$ & ${\it 24649=157^2}$ &$$& $$\\\hline
  \end{tabular}}
 \end{center}
  \begin{remark}
Table 1 shows that there are $102$ weak Carmichael numbers less than
$25000$, and between them there are 9 Carmichael numbers,  57
odd  prime powers, and 36 other composite numbers. 
 Recall that in 2006 R.G.E. Pinch \cite{pi2} reported that there are  
1401644 Carmichael numbers up to $10^{18}$ (also see \cite{pi1}
for a search of total 105212 Carmichael numbers up to $10^{15}$).
Notice that $1401644\approx 1.4\times (10^{18})^{1/3}$.
  \end{remark}

\vfill\eject
\begin{center}
{\bf Table 2.} Weak Carmichael numbers up to $2\times 10^6$
that are not prime powers
\qquad\qquad\qquad\qquad\qquad\qquad\qquad\qquad
{\small
  \begin{tabular}{|c|c|c|}\hline
$45_{8}=3^2\cdot 5$  & $225_8=3^2\cdot 5^2$ & $325_{48} =5^2 \cdot13$\\ 
$405_8 =3^4\cdot 5$ &  ${\bf 561_{320} =3 \cdot 11 \cdot 17}$ & 
$637_{72}=7^2\cdot 13$ \\ 
$891_{20}= 3^4\cdot 11 $ & 
${\bf 1105_{768}= 5\cdot13\cdot 17}$ & $1125_8=3^2\cdot 5^3$\\
$1225_{24}=5^2\cdot 7^2$ & $1377_{32}=3^4\cdot 17$ &
 ${\bf 1729_{1296}=7\cdot 13\cdot 19}$\\
 ${\it 2025_{8}=3^4 \cdot 5^2} $& ${\bf 2465_{1792}=5\cdot 17\cdot 29}$ 
& ${\bf 2821_{2160}=7\cdot 13\cdot 31}$\\
 $3321_{80}=3^4\cdot 41$ & $3645_{8}=3^6\cdot 5$ &
 $3751_{300}=11^2\cdot 31$\\
 $3825_{128}=3^2\cdot 5^2\cdot 17$ &$4225_{48}=5^2\cdot 13^2$ &
 $4961_{400}=11^2\cdot  41$ \\
 $5589_{44}=3^5\cdot 23$& $5625_{8}=3^2\cdot 5^4$ & 
$6517_{108}=7^3\cdot 19$ \\
 $6525_{224}=3^2\cdot 5^2\cdot 29$ &
 ${\bf 6601_{5280}=7\cdot 23\cdot 41}$& $7381_{600}=11^2\cdot 61$\\ 
$8125_{48}=5^4\cdot 13$ & $8281_{72}=7^2\cdot 13^2$ & 
 $8625_{176}=3\cdot 5^3\cdot 23$\\
 ${\bf 8911_{7128}=7\cdot 19\cdot 67}$ & $9801_{20}=3^4\cdot 11^2$ & 
$10125_{8}=3^4\cdot 5^3$ \\
 ${\bf 10585_{8064}=5\cdot 29\cdot 73}$ &
 $10625_{64}=5^4\cdot 17$ & $12025_{1728}=5^2\cdot 13\cdot 37$ \\
 $13357_{648}=19^2\cdot 37$&
$13833_{2912}=3^2\cdot 29\cdot 53$ &
 $14161_{96}=7^2\cdot 17^2$ \\ 
${\bf 15841_{12960}=7\cdot 31\cdot 73}$ & 
$15925_{288}=5^2\cdot 7^2\cdot 13$ &
 $18225_{8}=3^6\cdot 5^2$\\  $21141_{56}=3^6\cdot 29$ & 
$23409_{32}=3^4\cdot 17^2$ &
$23805_{176}=3^2\cdot 5\cdot 23^2$ \\
$25425_{896}=3^2\cdot 5^2\cdot 113$ &
$26353_{1296}=19^2\cdot 73$ &
$28033_{1536}=17^2\cdot 97$\\  
$28125_{8}=3^2\cdot 5^5$ & 
${\bf 29341_{25920}=13\cdot 37\cdot 61}$ &
 $30625_{}^{}=5^4\cdot 7^2$ \\
$31213_{}^{}=7^4\cdot 13$ & $32805_{}^{}=3^8\cdot 5$ & 
$33125_{}^{}=5^4\cdot 53$ \\
 $35425_{}^{}=5^2\cdot 13\cdot 109$ &
$35443_{}^{}=23^2\cdot 67$ &
 $38637_{}^{}=3^6\cdot 53$ \\
 ${\bf 41041=7\cdot 11\cdot 13\cdot 41}$ &
 $ 41125_{}^{}=5^3\cdot 7\cdot 47$ & 
 $ 45325_{}^{}=5^2\cdot 7^2\cdot 37$ \\ 
 $45441_{}^{}=3^5\cdot 11\cdot 17 $ &
 $ {\bf 46657=13\cdot 37\cdot 97}$ & 
 $ 47081_{}^{}=23^2\cdot 89$ \\ 
 $ 47125_{}^{}=5^3\cdot 13\cdot 29$ & 
 ${\it 50625_{}^{}=3^4\cdot 5^4}$& 
${\bf 52633=7\cdot 73\cdot 103}$\\
 $ 54145_{}^{}=5\cdot 7^2\cdot 13\cdot 17$ &
 $ 54925_{}^{}=5^2\cdot 13^3$ &
 $ 58621_{}^{}=31^2\cdot 61$ \\
 $60025_{}^{}=5^2\cdot 7^4$ &
 ${\bf 62745=3\cdot 5\cdot 47\cdot 89}$ &
${\bf 63973=7\cdot 13\cdot 19\cdot 37}$ \\
$65025_{}^{}=3^2\cdot 5^2\cdot 17^2$ &
$65341_{}^{}=19^2\cdot 181$ &
$72171_{}^{}=3^8\cdot 11$ \\
$74431_{}^{}=7^4\cdot 31$ &
 ${\bf 75361=11\cdot 13\cdot 17\cdot 31}$ &
 $78625_{}^{}=5^3\cdot 17\cdot 37$ \\  
$81289_{}^{}=13^3\cdot 37$ & 
$ 83125=5^4\cdot 7\cdot 19$ &
$89425_{}^{}=5^2\cdot 7^2\cdot 73$ \\
$91125_{}^{}=3^6\cdot 5^3$ &
 $94501_{}^{}=11^3\cdot 71$ &
 $98125_{}^{}=5^4\cdot 157$ \\
$99541_{}^{}=13^2\cdot 19\cdot 31$ & 
$99937_{}^{}=37^2\cdot 73$ &
${\bf 101101=7\cdot 11\cdot 13\cdot 101}$ \\
${\it 105625_{}^{}=5^4\cdot 13^2}$ &
 $106641_{}^{}=3^2\cdot 17^2\cdot 41$ &  
$107653_{}^{}=7^2\cdot 13^3$ \\ 
$107811_{}^{}=3^4\cdot 11^3$ &
$111537_{}^{}=3^8\cdot 17$ &
  ${\bf 115921=13\cdot 37\cdot 241}$ \\ 
$116281_{}^{}=11^2\cdot 31^2$ &
 $117325_{}^{}=5^2\cdot 13\cdot 19^2$ & 
$123823_{}^{}=7^3\cdot 19^2$ \\ 
${\bf 126217=7\cdot 13\cdot 19\cdot 73}$& 
$128547_{}^{}=3^5\cdot 23^2$ & 
$134113_{}^{}=7^3\cdot 17\cdot 23$ \\
$136161_{}^{}=3^4\cdot 41^2$ &
 $140625_{}^{}=3^2\cdot 5^6$ & 
$142129_{}^{}=13^2\cdot 29^2$ \\
$146461_{}^{}=7^4\cdot 61$ & 
${\bf 162401_{}^{}=17\cdot 41\cdot 233}$ & 
${\it 164025_{}^{}=3^8\cdot 5^2}$\\
${\bf 172081=7\cdot 13\cdot 31\cdot 61}$&
$177331_{}^{}=7^3\cdot 11\cdot 47$ & 
$180225_{}^{}=3^4\cdot 5^2\cdot 89$ \\ 
$180625_{}^{}=5^4\cdot 17^2$ &
$187461_{}^{}=3^3\cdot 53\cdot 131$ & 
${\bf 188461_{}^{}=7\cdot 13\cdot 19\cdot 109}$ \\ 
$189225_{}^{}=3^2\cdot 5^2\cdot 29^2$ & 
$195625_{}^{}=5^4\cdot 313$ & 
$203125_{}^{}=5^6\cdot 13$ \\ 
$203401_{}^{}=11^2\cdot 41^2$ &
$203841_{}^{}=3^2\cdot 11\cdot 29\cdot 71$ & 
$207025_{}^{}=5^2\cdot 7^2\cdot 13^2$\\ 
$211141_{}^{}=7^2\cdot 31\cdot 139$ & 
$231601_{}^{}=31^2\cdot 241$&
$232897_{}^{}=7^4\cdot 97$\\ 
$236321_{}^{}=29^2\cdot 281$ & 
$239701_{}^{}=7\cdot 11^2\cdot 283$& 
$241129_{}^{}=7^3\cdot 19\cdot 37$\\
$251505_{}^{}=3^7\cdot 5\cdot 23$& 
${\bf 252601=41\cdot 61\cdot 101}$&
 $253125_{}^{}=3^4\cdot 5^5$\\
 $254221_{}^{}=11^3\cdot 191$& 
$261625_{}^{}=5^3\cdot 7\cdot 13\cdot 23$& 
$269001_{}^{}=3^8\cdot 41$\\ 
${\bf 278545=5\cdot 17\cdot 29\cdot 113}$&
$290521_{}^{}=7^4\cdot 11^2$&
 ${\bf 294409=37\cdot 73\cdot 109}$\\ 
$295245_{}^{}=3^{10}\cdot 5$& 
$306397_{}^{}=7^2\cdot 13^2\cdot 37$& 
$307051_{}^{}=47^2\cdot 139$\\
 $309825_{}^{}=3^6\cdot 5^2\cdot 17$& 
$312481_{}^{}=13^2\cdot 43^2$& 
$314721_{}^{}=3^2\cdot 11^2\cdot 17^2$\\ 
${\bf 314821_{}^{}=13\cdot 61\cdot 397}$&
$319345_{}^{}=5\cdot 13\cdot 17^3$&
$321201_{}^{}=3^2\cdot 89\cdot 401$\\
${\bf 334153_{}^{}=19\cdot 43\cdot 409}$&
$338031_{}^{}=3^2\cdot 23^2\cdot 71$&
${\bf 340561=13\cdot 17\cdot 23\cdot 67}$\\
$ 341341=7\cdot 11^2\cdot 13\cdot 31$&
$354061_{}^{}=29^2\cdot 421$&
$362551_{}^{}=7^4\cdot 151$\\
$378625_{}^{}=5^3\cdot 13\cdot 233$&
$388125_{}^{}=3^3\cdot 5^4\cdot 23$& 
$397953_{}^{}=3^4\cdot 17^3$\\ 
  \hline
   \end{tabular}}
 \end{center}
\vfill\eject

\begin{center}
{\bf Table 2.}  (Continued)
\qquad\qquad\qquad\qquad\qquad\qquad\qquad\qquad
{\small
\begin{tabular}{|c|c|c|}\hline

$398125_{}^{}=5^4\cdot 7^2\cdot 13$&
${\bf 399001=31\cdot 61\cdot 211}$& 
$401841_{}^{}=3^4\cdot 11^2\cdot 41 $\\ 
$405121_{}^{}=41^2\cdot 241$&
${\it 405769_{}^{}=7^4\cdot 13^2}$&
$409825_{}^{}=5^2\cdot 13^2\cdot 97$\\
${\bf 410041=41\cdot 73\cdot 137}$&
 $441013_{}^{}=53^2\cdot 157$&
$442225_{}^{}=5^2\cdot 7^2\cdot 19^2$\\
$444925_{}^{}=5^2\cdot 13\cdot 37^2$&
 ${\bf 449065=5\cdot 19\cdot 29\cdot 163}$&
$450241_{}^{}=11^2\cdot 61^2$  \\
 $453125_{}^{}=5^6\cdot 29$&
 $453871_{}^{}=11^4\cdot 31$&
$455625_{}^{}=3^6\cdot 5^4$\\
$462241_{}^{}=13\cdot 31^2\cdot 37$&
 $468391_{}^{}=7^2\cdot 11^2\cdot 79$&
$472361_{}^{}=41^2\cdot 281$\\ 
${\bf 488881=37\cdot 73\cdot 181}$& 
$494209_{}^{}=19^2\cdot 37^2$& 
$499681_{}^{}=7\cdot 13\cdot 17^2\cdot 19$\\ 
$501025_{}^{}=5^2\cdot 7^2\cdot 409$& 
$505141_{}^{}=7^2\cdot 13^2\cdot 61$& 
$511525_{}^{}=5^2\cdot 7\cdot 37\cdot 79$\\ 
${\bf 512461=31\cdot 61\cdot 271}$& 
${\bf 530881=13\cdot 97\cdot 421}$& 
$531505_{}^{}=5\cdot 13^2\cdot 17\cdot 37$\\ 
$544563_{}^{}=3^8\cdot 83$& 
${\bf 552721=13\cdot 17\cdot 41\cdot 61}$& 
$554625_{}^{}=3^2\cdot 5^3\cdot 17\cdot 29$\\ 
$561925_{}^{}=5^2\cdot 7\cdot 13^2\cdot 19$& 
$566401=11^2\cdot 31\cdot 151$& 
$578125_{}^{}=5^6\cdot 37$\\ 
$578641_{}^{}=7^4\cdot 241$& 
$595441_{}^{}=7\cdot 11^2\cdot 19\cdot 37$& 
$600281_{}^{}=11^4\cdot 41$\\ 
$604513_{}^{}=7^2\cdot 13^2\cdot 73$& 
$611893_{}^{}=47^2\cdot 277$& 
$613089_{}^{}=3^6\cdot 29^2$\\ 
$624169_{}^{}=7\cdot 13\cdot 19^3$&
$652257_{}^{}=3^2\cdot 23^2\cdot 137$& 
${\bf 656601=3\cdot 11\cdot 101\cdot 197}$\\ 
${\bf 658801=11\cdot 13\cdot 17\cdot 271}$ & 
${\bf 670033=7\cdot 13\cdot 37\cdot 199}$&
$690625_{}^{}=5^5\cdot 13\cdot 17$\\ 
$693889_{}^{}=7^4\cdot 17^2$& 
$695871_{}^{}=3^4\cdot 11^2\cdot 71$& 
$703125_{}^{}=3^2\cdot 5^7$\\ 
$713125_{}^{}=5^4\cdot 7\cdot 163$& 
$714025_{}^{}=5^2\cdot 13^4$& 
$717025_{}^{}=5^2\cdot 23\cdot 29\cdot 43$\\ 
${\bf 748657=7\cdot 13\cdot 19\cdot 433}$& 
$ 750541_{}^{}=11\cdot 31^2\cdot 71$& 
$ 750925=5^2\cdot 7^2\cdot 613$\\ 
$ 765625=5^6\cdot 7^2$& 
$767625=3\cdot 5^3\cdot 23\cdot 89$&
$777925=5^2\cdot 29^2\cdot 37$\\
$780325=5^2\cdot 7^4\cdot 13$&
$784225=5^2\cdot 13\cdot 19\cdot 127$&
$793117=13^3\cdot 19^2$\\
${\it 793881=3^8\cdot 11^2}$&
$803551=7^2\cdot 23^2\cdot 31$&
$808561=13\cdot 37\cdot 41^2$\\
$811073=59^2\cdot 233$&
$815121=3^2\cdot 41\cdot 47^2$&
$815425=5^2\cdot 13^2\cdot 193$\\
$820125=3^8\cdot 5^3$& 
${\bf 825265=5\cdot 7\cdot 17\cdot 19\cdot 73}$&
$838125=3^2\cdot 5^4\cdot 149$\\
${\bf 838201=7\cdot 13\cdot 61\cdot 151}$&
${\bf 852841=11\cdot 31\cdot 41\cdot 61}$&
$856087=43^2\cdot 463$\\
$856801=11^2\cdot 73\cdot 97$&
$860625=3^4\cdot 5^4\cdot 17$&
$877825=5^2\cdot 13\cdot 37\cdot 73$\\
$879217=53^2\cdot 313$&
$893101=11^4\cdot 61$&
$894691=7^2\cdot 19\cdot 31^2$\\
$943041=3\cdot 11\cdot 17\cdot 41^2$&
$965497_{}^{}=13^2\cdot 29\cdot 197$& 
$968485_{}^{}=5\cdot 7^2\cdot 59\cdot 67$\\
$970785_{}^{}=3^5\cdot 5\cdot 17\cdot 47$&
$979837_{}^{}=79^2\cdot 157$&
$989901_{}^{}=3^4\cdot 11^2\cdot 101$\\ 
${\bf 997633=7\cdot 13\cdot 19\cdot 577}$&
  &
\\\hline
$1002001=7^2\cdot 11^2\cdot 13^2$&
${\bf 1024651=19\cdot 199\cdot 271}$& 
$1030393=7\cdot 13^3\cdot 67$\\
${\bf 1033669=7\cdot 13\cdot 37\cdot 307}$&
${\bf 1050985=5\cdot 13\cdot 19\cdot 23\cdot 37}$&
$1063651=71^2\cdot 211$\\
$1071225=3^4\cdot 5^2\cdot 23^2$&
$1080801=3^2\cdot 29\cdot 41\cdot 101$&
${\bf 1082809=7\cdot 13\cdot 73\cdot 163}$\\
$1105425=3^2\cdot 5^2\cdot 17^3$&
$1140625=5^6\cdot 73$& 
${\bf 1152271=43\cdot 127\cdot 211}$\\
$1154881=7^4\cdot 13\cdot 37$&
$1165537=17^2\cdot 37\cdot 109$&
$1185921=3^4\cdot 11^4$\\
${\bf 1193221=31\cdot 61\cdot 631}$&
$1207845=3^3\cdot 5\cdot 23\cdot 389$&
$1214869=59^2\cdot 349$\\
$1221025=5^2\cdot 13^2\cdot 17^2$&
${\it 1265625=3^4\cdot 5^6}$& 
$1269621=3^3\cdot 59\cdot 797$\\
$1299961=13\cdot 19^2\cdot 277$&
$1321029=3^4\cdot 47\cdot 347$&
$1335961=11^2\cdot 61\cdot 181$\\
$1355121=3^2\cdot 17^2\cdot 521$&
$ 1357741=7^2\cdot 11^2\cdot 229$&
$ 1358127=3^{10}\cdot 23$\\
$1373125=5^4\cdot 13^3$&
$1399489=7^2\cdot 13^4$&
$1401841=7^3 \cdot 61 \cdot 67$\\
$ 1413721=29^2\cdot 41^2$&
$1416521=71^2 \cdot 281$&
$1439425=5^2\cdot 13\cdot 43\cdot 103$\\
$1443001=7^4\cdot 601$ &
${\bf 1461241=37\cdot 73\cdot 541}$&
$1468125=3^4\cdot 5^4\cdot 29$\\
$1476225=3^{10}\cdot 5^2$&
$1498861=7^{2}\cdot 13^2\cdot 181$&
${\it 1500625=5^4\cdot 7^4}$\\
$1506625=5^3 \cdot 17\cdot 709$&
$1529437=7^6\cdot 13$&
$1540081=17^2\cdot 73^2$\\
$1555009= 29^2\cdot 43^2$&
$ 1566891=3^3\cdot 131\cdot 443$&
${\bf 1569457=17 \cdot 19 \cdot 43  \cdot 113}$\\
$1610401=13^3 \cdot 733$&
$1615441=31^2\cdot 41^2$&
${\bf 1615681=23\cdot 199\cdot 353}$\\
$1653125=5^5\cdot 23^2$&
$1658385=3^2\cdot 5\cdot 137\cdot 269$&
$1677025=5^2\cdot 7^2\cdot 37^2$\\
$1710325=5^2\cdot 37\cdot 43^2 $&
$1741825=5^2\cdot 19^2\cdot 193$&
$1742221=13^4\cdot 61$\\
$1755625=5^4\cdot 53^2$&
${\bf 1773289=7\cdot 19\cdot 67\cdot 199}$&
$1815937=97^2\cdot 193$\\
${\bf 1857241=31\cdot 181\cdot 331}$&
${\it 1896129=3^8\cdot 17^2}$ &
${\bf 1909001=41\cdot 101\cdot 461}$\\
$1923769=19^2\cdot 73^2$&
$1935025=5^2\cdot 17\cdot 29\cdot 157$&
$1953433=79^2\cdot 313$\\\hline
   \end{tabular}}
 \end{center}
  \begin{remark}

Here, as always in the sequel, the Carmichael number(s) and 
weak Carmichael number(s) will be  often denoted by $CN$ and 
$WCN$, respectively. A computation via {\tt Mathematica 8}  shows that there are 
``numerous" weak Carmichael numbers  that are neither Carmichael numbers nor 
prime powers. In particular, Table 2 shows that up to $10^6$ there are 
$235$ weak Carmichael numbers which are not prime powers,  
and between them there are 43 Carmichael numbers.  Moreover, up to 
$2\times 10^6$  there are $298$ $WCN$ 
which are not prime powers,  and between them there are 
$55$ $CN$.
 \end{remark}

  \begin{remark}
It is known \cite[p. 338]{or} that a Carmichael number can be 
a product of two other Carmichael numbers; for example, such a number 
is 
$$
(7\cdot 13\cdot 19)(37\cdot 73\cdot 109)=1729\cdot 294409=509033161.
$$
It can be of interest to consider a related problem extended to the set 
of $WCN$ which are not prime powers (for example, $10125=45\times 225$,
$18225=45\times 405$ and $50625=45\times 1125$).
  \end{remark}
 \begin{example} 
Notice that it is easy to determine $WCN$ 
with two distinct prime factors. In \cite{bw} the authors observed that 
such numbers are all integers of the form $3^{2e}5^f$ for any $e,f\ge 1$,
and more generally, given any two odd primes $p<q$ with 
$q-1\not\equiv 0 (\bmod{\, p})$,  $p^{e\varphi(q-1)}q^{f\varphi(p-1)}$
is a $WCN$. For arbitrary given positive integers $e$ 
and $f$ such that $e\ge f$ and $e+f\ge 3$, denote by ${\mathcal C}_w(e,f)$
the set of all $WCN$ of the form $n=p^eq^f$ for 
some distinct odd primes $p$ and $q$. For any odd prime $p$ let 
${\mathcal C}_w(p;e,f)$ denote the set of all primes $q$  such that
$p^eq^f\in {\mathcal C}_w(e,f)$. 
Then by Theorem 2.4, $n$ is in ${\mathcal C}_w(e,f)$ if and only if 
$p-1\mid q^f-1$ and $q-1\mid p^e-1$, or equivalently
with  $p-1\mid q^f-1$ and $q-1\mid p^e-1$, respectively. In other words, for 
any given odd  prime $q$, a prime $p$ is in ${\mathcal C}_w(q;e,f)$   
if and only if $p-1\mid q^f-1$ and $q-1\mid p^e-1$.
For example, when $e=1$ and $f=2$, the above two conditions easily 
reduced to the condition of finding all divisors $d\ge 2$ of 
$p+1$ such that the number $q:=d(p-1)+1$ is a prime. Examining 
this condition for primes $p\in\{3,5,7,11,13,\ldots ,997\}$
(all 168 primes less than 1000), we find 
452 {\it WCN} of the form 
$p^2q$. We have verified also that into prime  factorizations
of these 452 numbers does not occur only 
primes 107, 317, 433 and 857 less than 1000,
while between other 164 these primes, each of the  primes 
$13,73,193,277,313,397,421,457,541,613,673,733,757$ and $787$
occur only as a non-square factor $q$ into {\it WCN} $p^2q$
(for example, for the first such number 13, $5^2\cdot 13$ 
is a $WCN$, and for the latest between them, 787, $263^2\cdot 787$ 
is a $WCN$).

 Since for a given $p$ and  divisors $d_1=2$ and $d_2=(p+1)/2$ of $p+1$, 
we have the candidates  $q_1=2p-1$ and $q_2=(p^2+1)/2$ for $q$, respectively.
In the first case, if $2p-1$ is also a prime, we obtain
that  $n_1=p^2(2p-1)$ is a {\it WCN}. In the second case, if  $(p^2+1)/2$ 
is a prime, then $p^2(p^2+1)/2$ is  a {\it WCN}. Notice that it 
was  conjectured that there are infinitely many pairs 
$(p,2p+1)$ such that both $p$ and $2p+1$ are primes 
(such a prime $p$ is called a {\it Sophie Germain prime}; AOO5384 in OEIS).
A computation shows that there are many pairs $(p,2p-1)$
such that both numbers $p$ and $2p-1$  are primes
(up to $10^3$, $10^4$, $10^5$, $10^6$, $10^7$ there are 
153, 1206, 9686, 82374 and 711033 such pairs, respectively,
while related numbers of Sophie Germain primes are 167, 1222, 9668,
82237 and 711154 respectively).
 Moreover, there are many triplets $(p,2p-1,(p^2+1)/2)$
such that the all  numbers $p,2p-1$ and $(p^2+1)/2$ are primes
(up to $10^3$, $10^4$, $10^5$, $10^6$, $10^7$ there are 
30, 180, 1113, 8029 and 58294  such triplets, respectively.) 

Similarly, if $e=3$ and $f=1$, then the corresponding conditions are 
equivalent to finding all divisors $d\ge 2$ of $p^2+p+1$ such that the number 
$q:=d(p-1)+1$ is a prime. For example, using this condition 
to the primes $p\in\{3,5,7,11,13\}$, we obtain the following 
three numbers in  ${\mathcal C}_w(3,1)$: $7^3\cdot 19=6517$, 
$11^3\cdot 71=94501$ and $11^3\cdot 191=254221$. 

A determination of some elements of ${\mathcal C}_w(2,2)$ consists in finding 
distinct odd primes $p$ and $q$ such that $p-1\mid q^2-1$ and 
$q-1\mid p^2-1$. Using this for $p\in\{3,5,7,11,13\}$, we get the following 
numbers in  ${\mathcal C}_w(2,2)$: $3^2\cdot 5^2=225$, 
$5^2\cdot 7^2=1225$, $5^2\cdot 13^2=4225$, $7^2\cdot 13^2=8281$,
$7^2\cdot 17^2=14161$, $11^2\cdot 31^2=116281$, $11^2\cdot 41^2=203401$,
$11^2\cdot 61^2=450241$, $13^2\cdot 29^2=142129$ and 
$13^2\cdot 43^2=312481$. 

For arbitrary pair $(e,f)$ of integers $e$ and $f$ with $1\le e\le f$ 
and $e+f\ge 3$, let ${\mathcal P}_w(e,f)$ be  a set defined as 
a a set of all pairs $(p,q)$ of distinct  primes $p$ and $q$ such that 
$p^eq^f\in {\mathcal C}_w(e,f)$. Since $p^e-1\mid p^{e'}-1$ 
whenever $e\mid e'$, it follows that for every such a pair $(e,e')$,
 ${\mathcal P}_w(e',f)\subseteq {\mathcal P}_w(e,f)$ holds. 
We conjecture that the converse statement is also true, that is, we have
  \end{example}
 \begin{conjecture}
If ${\mathcal P}_w(e',f')={\mathcal P}_w(e,f)$ then 
$f=f'$ and $e\mid e'$, or $e=e'$ and $f\mid f'$.
 \end{conjecture}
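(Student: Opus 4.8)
The plan is to translate membership in ${\mathcal P}_w(e,f)$ into a statement about multiplicative orders and then argue that these orders allow one to recover $e$ and $f$ from the set alone. By Theorem 2.4, a pair $(p,q)$ of distinct odd primes lies in ${\mathcal P}_w(e,f)$ exactly when $p-1\mid q^f-1$ and $q-1\mid p^e-1$; since $p\nmid q-1$ and $q\nmid p-1$ are forced, this is equivalent to ${\rm ord}_{q-1}(p)\mid e$ and ${\rm ord}_{p-1}(q)\mid f$, where ${\rm ord}_m(a)$ denotes the multiplicative order of $a$ modulo $m$. Thus ${\mathcal P}_w(e,f)=A_e\cap B_f$, where $A_e=\{(p,q):{\rm ord}_{q-1}(p)\mid e\}$ depends only on $e$ and $B_f=\{(p,q):{\rm ord}_{p-1}(q)\mid f\}$ depends only on $f$. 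The monotonicity noted in the text is just the fact that $e\mid e'$ gives $A_e\subseteq A_{e'}$ and $f\mid f'$ gives $B_f\subseteq B_{f'}$; the Conjecture is the converse, so the task is to detect any genuine change in $e$ (respectively $f$) from inside the set.

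First I would isolate the two coordinates by constructing \emph{witness pairs}. Call $(p,q)$ a \emph{type-I witness of order $m$} if $p-1\mid q-1$, so that $(p,q)\in B_f$ automatically for every $f\ge 1$, and ${\rm ord}_{q-1}(p)=m$; for such a pair one has $(p,q)\in{\mathcal P}_w(e,f)$ if and only if $m\mid e$, independently of $f$. Type-II witnesses are defined symmetrically to probe $f$. A single family of type-I witnesses, one of each order $m$, then lets one read off the set of divisors of $e$ purely from ${\mathcal P}_w(e,f)$, and hence recover $e$ as the largest realized order; type-II witnesses recover $f$. Granting enough witnesses, the equality ${\mathcal P}_w(e',f')={\mathcal P}_w(e,f)$ forces $e$ and $e'$ to have the same divisors and likewise $f$ and $f'$, which is even stronger than the stated conclusion, and the independence of the two witness families rules out the ``cross'' case in which a change in $e$ is compensated by a change in $f$. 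A concrete illustration is $(p,q)=(3,5)$: here ${\rm ord}_4(3)=2$ and ${\rm ord}_2(5)=1$, so $(3,5)\in{\mathcal P}_w(e,f)$ if and only if $e$ is even, and this one pair already separates every ${\mathcal P}_w(2k,f)$ from every ${\mathcal P}_w(2k-1,f)$.

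The hard part will be the witness construction, and this is exactly where I expect the argument to become conditional. To realize ${\rm ord}_{q-1}(p)=m$ I would first invoke Zsigmondy's theorem to produce a primitive prime divisor $\ell$ of $p^m-1$, so that ${\rm ord}_\ell(p)=m$, and then try to locate a prime $q$ with $q-1$ built from such primitive factors in a way that keeps the order of $p$ modulo $q-1$ equal to $m$, while simultaneously arranging $p-1\mid q-1$ and the primality of $q$. Each requirement in isolation is a routine application of Dirichlet's theorem on primes in arithmetic progressions, but forcing $q-1$ to carry a \emph{prescribed} multiplicative structure relative to $p$, and doing so for every target order $m$, is a genuine obstruction of the same difficulty as questions about primes $\ell$ for which a second prime of the shape $c\ell+1$ exists. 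These are precisely the statements that follow from Dickson's conjecture (invoked earlier in the discussion of the Chernick form) or from the Bateman--Horn heuristic, but which are not known unconditionally. I would therefore expect a complete proof of the Conjecture to go through modulo such a prime-producing hypothesis, the unconditional statement remaining open exactly because the witnesses of large order cannot yet be guaranteed to exist.
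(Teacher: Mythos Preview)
The statement you are addressing is labeled \emph{Conjecture} in the paper, and the paper offers no proof of it whatsoever; it is proposed immediately after the observation that $e\mid e'$ implies ${\mathcal P}_w(e',f)\subseteq{\mathcal P}_w(e,f)$, with the remark ``We conjecture that the converse statement is also true.'' There is therefore no proof in the paper to compare your proposal against.

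As for the proposal itself: your translation of membership in ${\mathcal P}_w(e,f)$ into the order conditions ${\rm ord}_{q-1}(p)\mid e$ and ${\rm ord}_{p-1}(q)\mid f$ is correct (the coprimality needed for these orders to be defined is indeed forced by the second half of Proposition~2.6), and the decomposition ${\mathcal P}_w(e,f)=A_e\cap B_f$ is a clean way to see the monotonicity. Your witness strategy is the natural one, and you are right to flag that producing, for each target order $m$, a prime $q$ with $q-1$ carrying a prescribed multiplicative structure relative to $p$ is exactly the kind of simultaneous primality statement that currently needs Dickson's conjecture or a Bateman--Horn-type hypothesis. So what you have is not a proof but a plausible conditional route, which you yourself acknowledge.

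One further point worth noting: if your witness families existed in full, your argument would actually yield $e=e'$ \emph{and} $f=f'$, which is strictly stronger than the disjunction stated in the conjecture. Since the forward implication the paper records only gives an inclusion (not an equality) when $e\mid e'$ with $e\neq e'$, the conjecture as literally stated is a somewhat weak formulation of what one would expect to be true; your approach suggests that the ``right'' statement is simply that ${\mathcal P}_w(e,f)$ determines the pair $(e,f)$.
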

Furthermore, for the pair $(e,f)$ with $1\le e\le f$ 
and $e+f\ge 3$ let ${\mathcal Q}_w(e,f)$ be a set defined as
   \begin{equation*}\begin{split}
{\mathcal Q}_w(e,f)&=\{p:\, p\,\, {\rm is\,\, a\,\, prime\,\,and\,\,
there\,\,is\,\, a \,\, prime\,\,} q\not=p\,\, {\rm\,\, such\,\, that\,\,}\\
&p^eq^f {\rm\,\,is\,\, a\,\,} WCN \,\,{\rm or\,\,}  
q^ep^f {\rm\,\,is\,\, a\,\,} WCN\}.
\end{split}\end{equation*}
  \begin{conjecture}
For arbitrary given pair $(e,f)$ with $1\le e\le f$ 
and $e+f\ge 3$ the set ${\mathcal Q}_w(e,f)$ has a density 
$1$ with respect to the set of all primes.
 \end{conjecture}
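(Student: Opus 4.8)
The plan is to translate membership in ${\mathcal Q}_w(e,f)$ into the existence of a prime in a thin sequence attached to $p$, and then to show that the set of $p$ for which no such prime exists has relative density zero among the primes.

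First, by Theorem 2.4 in the reformulation recorded in Example 2.32 (with the roles of $p$ and $q$ interchanged), for distinct odd primes $p,q$ the integer $q^ep^f$ is a weak Carmichael number precisely when $q-1\mid p^f-1$ and $p-1\mid q^e-1$. Hence a prime $p$ lies in ${\mathcal Q}_w(e,f)$ as soon as we can exhibit a prime $q\ne p$ meeting these two relations (or the symmetric pair). Since $e\le f$, the number $p^f-1$ has at least as many divisors as $p^e-1$, so I would work with the conditions $q-1\mid p^f-1$ and $p-1\mid q^e-1$ and impose the stronger, more convenient requirement $q\equiv 1\pmod{p-1}$; this forces $q^e\equiv 1\pmod{p-1}$ and so makes $p-1\mid q^e-1$ automatic. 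It therefore suffices to produce, for almost every prime $p$, a divisor $d$ of $p^f-1$ with $(p-1)\mid d$, $d\ne p-1$, and $d+1$ prime. Writing $d=(p-1)t$, this is exactly the demand that the shifted divisor $(p-1)t+1$ be prime for some admissible $t>1$.

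The heuristic supporting the conjecture is then transparent. The admissible $t$ correspond to a family of divisors of $p^f-1$ whose cardinality is, on average over $p$, of order $\tau(p^f-1)\approx f\log p$, while each candidate $(p-1)t+1\le p^f$ should be prime with ``probability'' of order $1/(f\log p)$ by the prime number theorem; thus the expected number of prime shifted divisors is bounded away from $0$, which is consistent with (though far weaker than) a density-$1$ conclusion. To convert this into a theorem I would attempt an almost-all argument: discard the thin set of $p$ for which $p^f-1$ has anomalously few divisors or few prime factors (a density-zero set of primes, by standard results on the distribution of $\tau$ and on the prime factors of $p-1$), and on the complement run a second-moment (variance) estimate showing that the number of prime shifted divisors concentrates near its expected value, hence is positive outside a further density-zero exceptional set.

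The hard part will be exactly this lower bound. Sieve methods yield only upper bounds for the count of primes in a prescribed thin sequence such as $\{(p-1)t+1\}$, and guaranteeing even one prime in it for almost all $p$ is a prime-values-in-thin-sequences problem of the same order of difficulty as several well-known open questions; indeed, for $f=2$ and $t=2$ it already specializes to the assertion that $2p-1$ is prime, which is not known to hold for a positive proportion of $p$. I therefore expect an unconditional proof to be out of reach with present technology, and I would regard as realistic targets either (i) a proof conditional on a Hardy--Littlewood / Bateman--Horn type conjecture or on GRH, where the expected asymptotics for primes in the relevant sequences can be invoked, or (ii) a weaker unconditional statement, such as positive lower density of ${\mathcal Q}_w(e,f)$, obtained by pooling the admissible divisors arising from \emph{both} $p^e-1$ and $p^f-1$ and feeding a Chebotarev or large-sieve input to force a prime shifted divisor for a positive proportion of $p$.
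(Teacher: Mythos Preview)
The statement you are attempting to prove is labelled \emph{Conjecture} in the paper, and the paper offers no proof whatsoever: it is proposed purely on the basis of the computations and heuristics in Example~2.32 and Remark~2.39, and is left open. So there is no ``paper's own proof'' to compare against.

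Your outline is a reasonable heuristic reduction, and your translation of the condition $q^ep^f\in\mathcal{WCN}$ into the divisibility pair $q-1\mid p^f-1$, $p-1\mid q^e-1$ is correct (this is exactly the content of Theorem~2.4 specialized as in Example~2.32). Forcing $q\equiv 1\pmod{p-1}$ to kill the second condition is also the natural move. However, the core of your plan --- the second-moment/variance argument that the number of prime shifted divisors $(p-1)t+1$ concentrates near its mean --- is not something you actually carry out, and as you yourself note, it runs into the standard obstruction that sieve methods give only upper bounds here. Your comparison with the $2p-1$ problem slightly overstates the difficulty (you are allowed to range over \emph{all} admissible divisors $t$ of $(p^f-1)/(p-1)$, not just $t=2$), but the essential point stands: guaranteeing even one prime in this family for almost all $p$ is a genuine prime-values-of-polynomials question, and your proposal does not close that gap. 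In short, what you have written is a plausible plan of attack and an honest assessment of its limitations, not a proof --- which is consistent with the paper's own treatment of the statement as a conjecture.
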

Finally, for every pair $(e,f)$ with $1\le e\le f$ 
and $e+f\ge 3$, and any odd prime $q$  let 
${\mathcal Q}_w(q;e,f)$ be a set defined as
   $$
{\mathcal Q}_w(q;e,f)=\{p:\,p\,\, {\rm is\,\, a\,\, prime\,\,such\,\,that}\,\,
p^eq^f {\rm\,\,is\,\, a\,\,} WCN \,\,{\rm or\,\,}  
q^ep^f {\rm\,\,is\,\, a\,\,} WCN\}.
  $$
 \begin{conjecture}
The union 
  $$
\bigcup_{1\le e\le f<\infty}{\mathcal Q}_w(q;e,f)
  $$ 
is an infinite set.
 \end{conjecture}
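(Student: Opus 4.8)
The plan is to invoke Theorem 2.4 to convert the statement ``$p^{e}q^{f}$ is a weak Carmichael number'' into two explicit and, crucially, \emph{independent} divisibility conditions on the two exponents, and then to produce infinitely many admissible primes $p$ by Dirichlet's theorem on primes in arithmetic progressions. Keep the odd prime $q$ fixed. For a prime $p\neq q$ and integers $a,b\geq 1$, Theorem 2.4 says that $n=p^{a}q^{b}$ is a weak Carmichael number precisely when $p-1\mid n-1$ and $q-1\mid n-1$. Reducing $n$ modulo $p-1$ (where $p\equiv 1$, hence $n\equiv q^{b}$) and modulo $q-1$ (where $q\equiv 1$, hence $n\equiv p^{a}$), these two requirements become
\begin{equation*}
p-1\mid q^{b}-1\qquad\text{and}\qquad q-1\mid p^{a}-1 .
\end{equation*}
The first constrains only the exponent $b$ of $q$, and the second only the exponent $a$ of $p$, so $a$ and $b$ may be prescribed separately.

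First I would settle the solvability of each condition. There is a $b\geq 1$ with $q^{b}\equiv 1\pmod{p-1}$ if and only if $q$ is a unit modulo $p-1$, i.e. $\gcd(q,p-1)=1$, equivalently $p\not\equiv 1\pmod{q}$; in that case the admissible $b$ are exactly the positive multiples of $b_{0}:=\operatorname{ord}_{p-1}(q)$. Likewise there is an $a\geq 1$ with $p^{a}\equiv 1\pmod{q-1}$ if and only if $\gcd(p,q-1)=1$, which holds automatically whenever $p>q-1$, and then the admissible $a$ are the positive multiples of $a_{0}:=\operatorname{ord}_{q-1}(p)$. Hence, for \emph{any} prime $p>q$ with $p\not\equiv 1\pmod{q}$, I can simultaneously satisfy both conditions: set $a=a_{0}$ and let $b$ be the least positive multiple of $b_{0}$ with $b\geq\max(a_{0},2)$. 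Then $p^{a}q^{b}$ is a weak Carmichael number, and because $1\leq a\leq b$ and $b\geq 2$ we have both $e\leq f$ and $e+f\geq 3$ for $(e,f):=(a,b)$; thus $p\in\mathcal Q_{w}(q;a,b)$. (The ordering constraint $e\leq f$ is in any case never binding, since if a valid pair had the exponent of $p$ exceeding that of $q$, the alternative product $q^{e}p^{f}$ allowed in the definition of $\mathcal Q_{w}(q;e,f)$ would record it.)

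It remains to exhibit infinitely many such $p$. Since $q$ is odd, $\gcd(2,q)=1$, so by Dirichlet's theorem the progression of integers $\equiv 2\pmod{q}$ contains infinitely many primes; every such prime satisfies $p\equiv 2\not\equiv 1\pmod{q}$, and deleting the finitely many that are $\leq q$ leaves infinitely many primes $p>q$ with $p\not\equiv 1\pmod{q}$. By the construction above, each of these belongs to $\bigcup_{1\leq e\leq f<\infty}\mathcal Q_{w}(q;e,f)$, so the union is infinite, as claimed. The one non-elementary input, and the step I regard as the heart of the matter, is the guarantee of infinitely many primes avoiding the class $1\pmod{q}$, which is exactly the content of Dirichlet's theorem; everything else — the decoupling of the two exponent conditions and the choice of multiples meeting $e\leq f$ and $e+f\geq 3$ — is elementary bookkeeping.
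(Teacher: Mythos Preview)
Your argument is correct, and it actually \emph{resolves} what the paper leaves open: the statement is labelled a Conjecture in the paper and no proof is offered there. The paper does record, immediately afterwards (Proposition~2.36 and Corollary~2.38), exactly the decoupling you exploit---that for distinct odd primes $p,q$ with the appropriate non-divisibility, $p^aq^b$ is a weak Carmichael number iff $a$ is a multiple of $\operatorname{ord}_{q-1}(p)$ and $b$ is a multiple of $\operatorname{ord}_{p-1}(q)$---but it does not combine this with an infinitude-of-primes input to deduce the conjecture.

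Your use of Theorem~2.4 and the reduction $n-1\equiv q^b-1\pmod{p-1}$, $n-1\equiv p^a-1\pmod{q-1}$ is clean and matches the paper's Proposition~2.36. The only substantive external ingredient you invoke is the existence of infinitely many primes $p\not\equiv 1\pmod q$; Dirichlet's theorem is certainly sufficient, though it is worth noting that this particular statement admits an elementary Euclid-type proof (if all large primes were $\equiv 1\pmod q$, any prime factor of $qp_1\cdots p_k-1$ would yield a contradiction), so the conjecture does not genuinely depend on Dirichlet. Your bookkeeping for the constraints $e\le f$ and $e+f\ge 3$ is fine: taking $a=a_0$ and $b$ a multiple of $b_0\ge 2$ with $b\ge a_0$ suffices, and indeed $b_0\ge 2$ automatically since $p>q$ forces $p-1\nmid q-1$.
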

Using arguments from Examples 2.32, we immediiately obtain the following 
 result and its corollary.
   \begin{proposition}
Let $p$ and $q$ be two odd distinct primes such that $p<q$ and 
$q-1$ is not divisible by $p$. Let $u$ and $v$ be the smallest 
positive integers for which $p^u\equiv 1(\bmod{\,q-1})$ and 
$q^v\equiv 1(\bmod{\,p-1})$. Then  $p^aq^b$ is a 
$WCN$ if and only if $a$ and $b$ are positive integers 
such that $u\mid a$ and $v\mid b$.
 \end{proposition}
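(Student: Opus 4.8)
The plan is to apply the Korselt-type criterion of Theorem 2.4 directly to $n=p^aq^b$ and then reduce each of the two resulting divisibility conditions by elementary modular arithmetic. Since $p$ and $q$ are precisely the distinct prime divisors of $n$, Theorem 2.4 asserts that $n$ is a weak Carmichael number if and only if both $p-1\mid n-1$ and $q-1\mid n-1$ hold; note that compositeness is automatic, since $a,b\ge 1$ forces $n$ to have two distinct prime factors. Thus the whole problem splits into analyzing these two divisibilities independently.

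For the first, I would use $p\equiv 1\pmod{p-1}$, so that $p^a\equiv 1\pmod{p-1}$ and hence $n-1=p^aq^b-1\equiv q^b-1\pmod{p-1}$; therefore $p-1\mid n-1$ is equivalent to $q^b\equiv 1\pmod{p-1}$. Symmetrically, from $q\equiv 1\pmod{q-1}$ one gets $q^b\equiv 1\pmod{q-1}$, whence $n-1\equiv p^a-1\pmod{q-1}$, so that $q-1\mid n-1$ is equivalent to $p^a\equiv 1\pmod{q-1}$. Consequently $n$ is a weak Carmichael number if and only if $q^b\equiv 1\pmod{p-1}$ and $p^a\equiv 1\pmod{q-1}$ hold simultaneously.

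The last step is to interpret these congruences through multiplicative orders. By their very definition, $v$ is the multiplicative order of $q$ modulo $p-1$ and $u$ is the multiplicative order of $p$ modulo $q-1$; hence $q^b\equiv 1\pmod{p-1}$ holds exactly when $v\mid b$, while $p^a\equiv 1\pmod{q-1}$ holds exactly when $u\mid a$. Combining these two equivalences with the reduction above yields precisely the asserted characterization.

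The only delicate point, and the step I would single out as the main (if modest) obstacle, is to justify that the orders $u$ and $v$ are well defined, i.e. that $\gcd(q,p-1)=1$ and $\gcd(p,q-1)=1$; without these the congruences could have no solution and the ``smallest positive exponent'' defining $u$ and $v$ would be vacuous. For the first, since $q$ is prime and $q>p>p-1$ we have $q\nmid p-1$, so $\gcd(q,p-1)=1$; for the second, the standing hypothesis $p\nmid q-1$ together with the primality of $p$ gives $\gcd(p,q-1)=1$ at once. These coprimality facts guarantee that $u$ and $v$ coincide with the respective multiplicative orders, validating the translation in the previous paragraph and completing the argument.
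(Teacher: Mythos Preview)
Your argument is correct and follows essentially the same route as the paper, which simply refers back to the computations in Example~2.32: apply Theorem~2.4 to reduce the WCN condition on $p^aq^b$ to $p-1\mid q^b-1$ and $q-1\mid p^a-1$, and then read these off as order conditions. Your explicit verification that $\gcd(p,q-1)=1$ and $\gcd(q,p-1)=1$ (so that $u$ and $v$ are genuine multiplicative orders) is a welcome addition that the paper leaves implicit.
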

 \begin{corollary}
If $n=p^eq^f$ is a weak Carmichael number, then 
$n=p^{de}q^{lf}$ is also a weak Carmichael number for 
all positive integers $d$ and $l$.
 \end{corollary}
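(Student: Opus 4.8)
The plan is to deduce this at once from the preceding Proposition~2.36. Since $n=p^eq^f$ is assumed to be a weak Carmichael number, its two prime factors automatically satisfy the hypothesis $p\nmid q-1$ required there: the converse half of Proposition~2.6 forces $p_i-1\not\equiv 0\pmod{p_j}$ for every pair of distinct prime factors of a weak Carmichael number, so in particular $q-1\not\equiv 0\pmod{p}$. Taking $p$ to be the smaller of the two primes, Proposition~2.36 applies and tells us that $p^eq^f$ being a weak Carmichael number is equivalent to $u\mid e$ and $v\mid f$, where $u$ and $v$ are the multiplicative orders of $p$ modulo $q-1$ and of $q$ modulo $p-1$. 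Because $e\mid de$ and $f\mid lf$, transitivity of divisibility yields $u\mid de$ and $v\mid lf$, and a second application of Proposition~2.36 returns that $p^{de}q^{lf}$ is a weak Carmichael number. (If instead $p>q$, the same reasoning applies after interchanging the roles of the two primes and of the two exponent pairs.)

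I would also record a self-contained derivation straight from the Korselt-type criterion of Theorem~2.4, which avoids the ordering bookkeeping. By Theorem~2.4, $p^eq^f$ is a weak Carmichael number precisely when $p-1\mid p^eq^f-1$ and $q-1\mid p^eq^f-1$. The first step is to decouple the two primes by reduction: since $p\equiv 1\pmod{p-1}$ we have $p^eq^f\equiv q^f\pmod{p-1}$, so the first divisibility is equivalent to $q^f\equiv 1\pmod{p-1}$; symmetrically, $q\equiv 1\pmod{q-1}$ makes the second equivalent to $p^e\equiv 1\pmod{q-1}$. Thus the weak Carmichael condition for $p^eq^f$ reads exactly $q^f\equiv 1\pmod{p-1}$ together with $p^e\equiv 1\pmod{q-1}$.

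The final step is the observation that a congruence to $1$ is preserved under taking powers of the exponent: from $q^f\equiv 1\pmod{p-1}$ one gets $q^{lf}=(q^f)^l\equiv 1\pmod{p-1}$, and from $p^e\equiv 1\pmod{q-1}$ one gets $p^{de}=(p^e)^d\equiv 1\pmod{q-1}$. By the same reduction these are precisely the two conditions of Theorem~2.4 for $p^{de}q^{lf}$, so that number is again a weak Carmichael number. There is no serious obstacle here; the only point needing care is the decoupling step, where reducing modulo $p-1$ (respectively $q-1$) annihilates the power of the prime that is congruent to $1$ and isolates a clean condition on the other prime. Once that is in place, closure under multiplying the exponents is immediate from $(x^k)^m=x^{km}$.
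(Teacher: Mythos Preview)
Your proposal is correct, and your first approach---deducing the corollary directly from Proposition~2.36 via the divisibility $u\mid e\mid de$ and $v\mid f\mid lf$---is exactly what the paper intends; the paper presents Corollary~2.37 as an immediate consequence of Proposition~2.36, which in turn rests on the arguments of Example~2.32. Your second, self-contained derivation from Theorem~2.4 is also valid and is essentially the content of Example~2.32 (where the equivalence $p-1\mid q^f-1$, $q-1\mid p^e-1$ is recorded) written out in full; it has the advantage of bypassing the ordering hypothesis $p<q$ in Proposition~2.36 and the appeal to Proposition~2.6, at the cost of repeating the decoupling step that the paper has already packaged into Proposition~2.36.
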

 \begin{corollary}
Let $p$ and $q$ be odd  primes such that $p<q$ and 
$q-1$ is not divisible by $p$. Then 
$p^{e\varphi(q-1)}q^{f\varphi(p-1)}$ is a weak Carmichael number
for arbitrary pair of positive integers $e$ and $f$. 
  \end{corollary}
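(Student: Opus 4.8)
The plan is to derive this directly from the preceding Proposition, which characterizes the pairs $(a,b)$ for which $p^aq^b$ is a weak Carmichael number: writing $u$ for the multiplicative order of $p$ modulo $q-1$ and $v$ for that of $q$ modulo $p-1$, the number $p^aq^b$ is a weak Carmichael number precisely when $u\mid a$ and $v\mid b$. It therefore suffices to verify that the exponents $a=e\varphi(q-1)$ and $b=f\varphi(p-1)$ satisfy these two divisibility conditions for every choice of positive integers $e$ and $f$.

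First I would confirm the two coprimality facts that make the orders $u$ and $v$ well defined and that permit Euler's theorem to apply. Since $p$ is prime and $p\nmid q-1$ by hypothesis, we have $\gcd(p,q-1)=1$. For the other side, since $q$ is an odd prime with $q>p\ge 3$, we have $q>p-1\ge 2$, so the prime $q$ cannot divide $p-1$; hence $\gcd(q,p-1)=1$. With coprimality in hand, Euler's theorem gives $p^{\varphi(q-1)}\equiv 1\pmod{q-1}$ and $q^{\varphi(p-1)}\equiv 1\pmod{p-1}$. By the definition of $u$ and $v$ as the least positive such exponents, this forces $u\mid\varphi(q-1)$ and $v\mid\varphi(p-1)$. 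Consequently $u\mid e\varphi(q-1)=a$ and $v\mid f\varphi(p-1)=b$ for arbitrary positive integers $e,f$, and the preceding Proposition immediately yields that $p^{e\varphi(q-1)}q^{f\varphi(p-1)}$ is a weak Carmichael number.

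There is essentially no substantive obstacle here: the statement is a formal consequence of the order-divisibility characterization, and the argument does not even require knowing $u$ and $v$ explicitly, only that they divide the respective totients. The one point meriting a moment of care is the coprimality $\gcd(q,p-1)=1$, which relies on the ordering $p<q$ (so that the prime $q$ exceeds $p-1$); this is precisely where the hypothesis $p<q$ enters, complementing the hypothesis $p\nmid q-1$ that secures $\gcd(p,q-1)=1$.
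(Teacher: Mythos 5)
Your proposal is correct and follows the same route the paper intends: the paper states this corollary as an immediate consequence of the preceding Proposition (via the arguments of Example 2.32), and your derivation via Euler's theorem applied to $p$ modulo $q-1$ and $q$ modulo $p-1$, giving $u\mid\varphi(q-1)$ and $v\mid\varphi(p-1)$, is exactly that argument. Your explicit check that $\gcd(q,p-1)=1$ follows from $q>p-1$ is a worthwhile detail the paper leaves implicit.
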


\begin{remark}
For any $e\ge 2$ and a fixed prime 
$p\ge 3$, consider the set ${\mathcal C}_w(p;e,1)$ of 
{\it WCN} of the form $p^eq$.  Then (cf. Example 2.32)
$n=p^eq$ is in {\it WCN} for some odd prime $q\not= p$ 
if and only if $p-1\mid q-1$ and $q-1\mid p^e-1$, or equivalently,  
$q=(p-1)s+1$ for some divisor $s\ge 2$ of 
$(p^e-1)/(p-1)=p^{e-1}+p^{e-2}+\cdots +1$. 
If $e$ is  even, then assuming $s=2$, that is, $q=2p-1$,
it follows that  $2p-1$ belongs to  ${\mathcal C}_w(p;e,1)$ 
if  and only if  $2p-1$ is  a prime. By using a ``usual" heuristic argument 
based on the {\it Prime number theorem} 
that the probability that an odd integer $m$ is a prime is $2/\log m$,
it follows that the ``expected number" of the elements 
of the set ${\mathcal C}_w(e,1)$ with even $e$ is at least
  $$
2\sum_{p\rm{\,\,odd\,\, prime}}\frac{1}{\log(2p-1)}\ge 
2\sum_{p\rm{\,\,odd\,\, prime}}\frac{1}{2(p-1)} =\infty.
  $$
(Here it is used the well known fact that the sum of reciprocals of
primes diverges).

The situation is somewhat  complicated when $e\ge 3$ is odd. Then 
consider the set of all odd primes $p$ such that $p\equiv 1(\bmod{\,e})$.
Then $\sum_{i=0}^{e-1}{p^i}\equiv 0(\bmod{\,e})$, and take
$q=e(p-1)+1$.  Then  the probability that  
 $q=(p-1)e+1$ is a prime is $e/(\varphi(e)\log (e(p-1)+1)$. Using 
this and the well known fact that 
the series $\sum_{p{\rm\,\,odd\,\, prime} \atop p\equiv 1(\bmod{\,e})}1/p$
diverges, we find that  the ``expected number" of the elements 
that belong to the set  ${\mathcal C}_w(e,1)$ with odd $e\ge 3$ is 
  $$
\frac{e}{\varphi(e)}\sum_{p{\rm\,\,odd\,\, prime}\atop p\equiv 1(\bmod{\,e})}
\frac{1}{\log (e(p-1)+1)}<
\frac{e}{\varphi(e)}\sum_{p{\rm\,\,odd\,\, prime}\atop p\equiv 1(\bmod{\,e})}
\frac{1}{e(p-1)}=\infty.
   $$
The above considerations suggest the conjecture that
${\mathcal C}_w(e,1)$ is infinite set for all $e\ge 2$. 
This conjecture by Corollary 2.37 implies the same conjecture for all sets 
 ${\mathcal C}_w(e,l)$ with $e\ge 2$ and $l\ge 2$.
In accordance to this, some additional 
computations  and the conjecture that 
for any given integer $s\ge 3$, there are infinitely many 
$CN$ with exactly $s$ prime factors 
(cf. a stronger Conjecture 1 in \cite{gp} which asserts that this number 
up to $x$ is at least $x^{1/s+o_s(1)}$), we give the following 
generalized conjecture.
  \end{remark}
  \begin{conjecture}
Let $s\ge 2$ be an arbitrary integer, and let 
$(e_1,e_2,\ldots,e_s)$ be any fixed $s$-tuple  of  integers 
$e_1,e_2,\ldots,e_s$ with $e_1\ge e_2\ge\cdots \ge e_s\ge 1$
and $\sum_{i=1}^se_i\ge 3$.
Then there are infinitely many weak Carmichael numbers $n$ 
with a prime factorization 
$n=p_1^{e_1}p_2^{e_2}\cdots p_s^{e_s}$, where $p_1,p_2,\ldots,p_s$ 
are  distinct odd primes. 
  \end{conjecture}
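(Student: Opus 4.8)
The plan is to reduce the statement, via the Korselt-type criterion of Theorem~2.4, to a Chernick-style one-parameter construction, and then to recognize the remaining difficulty as a prime $k$-tuples problem. By Theorem~2.4 (equivalently Theorem~2.4$'$), a number $n=p_1^{e_1}\cdots p_s^{e_s}$ with the prescribed exponents is a weak Carmichael number if and only if $p_i-1\mid n-1$ for every $i$. Since $p_i^{e_i}\equiv 1\pmod{p_i-1}$, reducing $n$ modulo $p_i-1$ turns this into the $s$ conditions
\[
p_i-1\ \Big|\ \prod_{j\ne i}p_j^{e_j}-1,\qquad i=1,2,\ldots,s.
\]
First I would seek the primes in the family $p_i=a_im+1$, with $a_1,\ldots,a_s$ fixed distinct positive integers to be chosen and $m$ running through the multiples of $A:={\rm lcm}(a_1,\ldots,a_s)$. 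Writing $m=Am'$ and expanding $\prod_{j\ne i}(a_jm+1)^{e_j}=1+m\sum_{j\ne i}e_ja_j+m^2(\cdots)$, one sees that $m$ always divides $\prod_{j\ne i}p_j^{e_j}-1$, and that the cofactor $T_i(m):=\bigl(\prod_{j\ne i}p_j^{e_j}-1\bigr)/m$ satisfies $T_i(m)\equiv\sum_{j\ne i}e_ja_j\pmod{a_i}$ once $a_i\mid m$. As $a_im\mid mT_i(m)$ is equivalent to $a_i\mid T_i(m)$, and $\sum_{j\ne i}e_ja_j\equiv\sum_{j}e_ja_j\pmod{a_i}$, the entire weak Carmichael requirement collapses to the single combinatorial condition
\[
a_i\ \Big|\ \sum_{j=1}^{s}e_ja_j\quad(1\le i\le s),\qquad\text{equivalently}\qquad {\rm lcm}(a_1,\ldots,a_s)\ \Big|\ \sum_{j=1}^{s}e_ja_j.
\]

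Next I would solve this condition with the $a_i$ distinct, which is the easy part. Writing $a_i=L/b_i$ with $L={\rm lcm}(b_1,\ldots,b_s)$, it suffices to have $\sum_j e_j/b_j\in\Bbb Z$; this is readily arranged with distinct $b_j$. For $s\ge 3$ one may take $b_j=e_jd_j$, where $1=\sum_j 1/d_j$ is a decomposition of $1$ into distinct unit fractions (so that $\sum_j e_j/b_j=1$), with the $d_j$ chosen so that the $b_j$ are distinct; the case $s=2$, where necessarily $e_1\ge 2$, is immediate from $a_1=1$, $a_2=e_1$. Taking $L$ even enough forces each $a_i$ to be even, hence each $p_i$ to be odd. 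One then checks that the resulting $s$-tuple of linear forms $a_1Am'+1,\ldots,a_sAm'+1$ is admissible, i.e.\ has no fixed prime divisor (any small obstruction being removed by rescaling $A$), and the forms are distinct because the $a_i$ are. Every parameter $m'$ for which all $s$ forms are simultaneously prime then yields, by the reduction above, a weak Carmichael number $n=\prod_i p_i^{e_i}$ of exactly the prescribed shape, with distinct $m'$ giving distinct $n$.

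The hard part---indeed the reason this is posed as a conjecture rather than a theorem---is the final step: proving that an admissible $s$-tuple of linear forms takes simultaneous prime values infinitely often. This is precisely an instance of Dickson's conjecture \cite{di}, which lies beyond current technology, since sieve methods deliver simultaneous almost-primes but not primes. Under Dickson's conjecture the construction above constitutes a complete conditional proof. Unconditionally I expect no proof to be within reach: already the basic instance $s=3$, $(e_1,e_2,e_3)=(1,1,1)$ amounts to the classical open problem of the infinitude of Carmichael numbers of Chernick form $(6m+1)(12m+1)(18m+1)$ (Example~2.33), and more generally the statement contains, for each $s$, the open conjecture that there are infinitely many Carmichael numbers with exactly $s$ prime factors (cf.\ \cite{gp}). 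Thus the parametrized construction together with Dickson's conjecture settles the conjecture conditionally and accounts for the heuristic plausibility already indicated in the remark following Corollary~2.38.
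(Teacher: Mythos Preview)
The statement you are addressing is labelled a \emph{Conjecture} in the paper and is not accompanied by any proof; the paper offers only the heuristic considerations in Remark~2.39 (for $s=2$) and the analogy with the Granville--Pomerance conjecture on Carmichael numbers with a prescribed number of prime factors. So there is no ``paper's own proof'' to compare against, and your write-up is best read as a heuristic justification rather than a proof.

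That said, your reduction is the right one and goes well beyond what the paper sketches. Via Theorem~2.4 you correctly turn the weak Carmichael condition into the system $a_i\mid\sum_j e_ja_j$ for a Chernick-type family $p_i=a_im+1$ with $A\mid m$, and you recognise that the remaining step---simultaneous primality of $s$ admissible linear forms---is exactly an instance of Dickson's conjecture, hence currently out of reach. You also correctly observe that the squarefree case $(e_1,\ldots,e_s)=(1,\ldots,1)$ already contains the open problem of infinitely many Carmichael numbers with exactly $s$ prime factors, so an unconditional proof cannot be expected. One point that deserves a little more care is the Egyptian-fraction step: writing $b_j=e_jd_j$ with $\sum 1/d_j=1$ can produce collisions among the $b_j$ (e.g.\ $(e_1,e_2,e_3)=(3,2,1)$ with $(d_1,d_2,d_3)=(2,3,6)$ gives $b_1=b_2=b_3=6$), so you should argue, using the abundance of distinct unit-fraction decompositions of $1$, that the $d_j$ can always be chosen to make the $b_j$ (hence the $a_j$) pairwise distinct. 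With that caveat, your construction gives a clean conditional proof under Dickson's conjecture and a satisfactory explanation of why the statement is posed as a conjecture.
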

\begin{remark}
A heuristic argument suggests that for a large odd positive integer 
$n$ which is neither a $CN$ nor a prime power, 
the ``probability" that 
$\sum_{i=1}^{\varphi(n)}r_i^{n-1}\equiv \varphi(n)(\bmod{\,n})$ is equal to 
$(n-1)/2$. Consequently, the number of $WCN$ in the 
interval $[1,n]$ is asymptotically equal to the double harmonic sum
  $2\sum_{k=1}^{[n/2]}1/k$ which is $\sim 2\log n$ as $n$ $\to\infty$.
Furthermore, as noticed above, 
the number of $CN$ in the interval $[1,n]$ is greater than 
$n^{2/7}$ for sufficiently large $n$. 
Moreover, under certain (widely-believed) assumptions about the distribution
of primes in arithmetic progressions, it is shown in \cite[Theorem]{agp}
(see also \cite{gp}) that there are $n^{1-o(1)}$ Carmichael numbers up to
$n$, as had been conjectured in 1956 by Erd\H{o}s \cite{er1} (see also
\cite{sh}). On the other hand, it is known that
the number of prime powers with exponents $\ge 2$ (the sequence A025475 in 
\cite{sl}) up to $x$ (see e.g., \cite[p. 27]{hr}) is given by $O(x^{1/2}\log x)$ 
(more precisely, this number is $2x^{1/2}\log x$).
These considerations suggest the following conjecture.
 \end{remark}
 \begin{conjecture}
The numbers of Carmichael numbers and weak Carmichael numbers 
in the interval $[1,n]$ are  asymptotically equal as $n\to\infty$.
  \end{conjecture}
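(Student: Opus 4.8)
The plan is to reduce the asymptotic relation $C_w(x)\sim C(x)$ to two ingredients: a partition of the weak Carmichael numbers according to their squarefree part, together with a sufficiently strong lower bound for $C(x)$. First I would split the weak Carmichael numbers up to $x$ into three disjoint classes. By Corollary 2.17 a composite $n$ is a Carmichael number exactly when it is squarefree and satisfies the congruence (2.2); since a squarefree composite satisfying (2.2) is by definition a weak Carmichael number, the squarefree weak Carmichael numbers are \emph{precisely} the Carmichael numbers (there are none with one or two prime factors, the latter case being excluded by Corollary 2.20). The remaining weak Carmichael numbers are non-squarefree, and they split further into the prime powers $p^e$ with $e\ge 2$ and the ``mixed'' numbers carrying at least two distinct prime factors of which at least one is repeated. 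Writing $P(x)$ and $M(x)$ for the counting functions of these two classes, I would record the exact decomposition
\begin{equation*}
C_w(x)=C(x)+P(x)+M(x).
\end{equation*}

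Next I would estimate the two error terms. By Proposition 2.18 every odd prime power $p^e$ with $e\ge2$ is a weak Carmichael number, so $P(x)$ counts exactly the odd prime powers of exponent at least $2$ up to $x$; the elementary estimate $\sum_{e\ge2}\pi(x^{1/e})=x^{1/2+o(1)}$ gives $P(x)=x^{1/2+o(1)}$. For $M(x)$ the target is a bound of the form $M(x)=x^{o(1)}$. Here I would exploit the criterion $c_w(n)\mid n-1$ (Theorem 2.4$'$): a repeated prime $p\mid n$ forces $p^2\mid n$ while simultaneously $p-1\mid n-1$, and these coupled divisibilities, together with the heuristic of Remark 2.41 (which predicts the mixed weak Carmichael numbers to be of order only $\log x$), strongly suggest that $M(x)$ is negligible.

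Finally I would invoke the lower bound on $C(x)$. Granting the (widely believed) hypothesis, also used in Remark 2.41, that $C(x)=x^{1-o(1)}$, both error terms are negligible against $C(x)$: indeed $P(x)=x^{1/2+o(1)}=o(C(x))$, and, with the bound $M(x)=x^{o(1)}$, likewise $M(x)=o(C(x))$. The decomposition then yields $C_w(x)/C(x)=1+o(1)$, which is the assertion.

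The main obstacle is exactly why the statement must be posed as a conjecture. Unconditionally one knows only $C(x)\ge x^{0.33}$ (Harman), and this is \emph{smaller} than the prime-power contribution $P(x)=x^{1/2+o(1)}$; hence $C_w(x)\sim C(x)$ simply cannot be deduced from the currently proven lower bounds, and it genuinely requires $C(x)/\sqrt{x}\to\infty$, a consequence of the Erd\H{o}s conjecture that remains open. In this sense the conjecture is essentially equivalent to the statement that Carmichael numbers are asymptotically at least as dense as the squares of primes. A secondary, but real, difficulty is to upgrade the heuristic count of the mixed class to an unconditional bound $M(x)=x^{o(1)}$; the Korselt-type condition of Theorem 2.4$'$ should make this accessible, but it is not carried out here.
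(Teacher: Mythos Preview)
The statement is posed in the paper as a \emph{conjecture}, not a theorem; the paper offers no proof, only the heuristic motivation in Remark~2.41. Your proposal follows essentially the same line of reasoning as that remark: the decomposition $C_w(x)=C(x)+P(x)+M(x)$ into Carmichael numbers, odd prime powers, and the ``mixed'' non-squarefree class; the estimate $P(x)=x^{1/2+o(1)}$; the heuristic that $M(x)$ is very small (the paper's Remark~2.41 suggests $M(x)\sim 2\log x$, consistent with your $x^{o(1)}$); and the conditional input $C(x)=x^{1-o(1)}$ from the Erd\H{o}s conjecture to absorb both error terms.

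You also correctly isolate the genuine obstruction: the best unconditional lower bound $C(x)\gg x^{0.33}$ is dominated by the prime-power count $P(x)=x^{1/2+o(1)}$, so no decomposition argument can succeed without first proving $C(x)/\sqrt{x}\to\infty$. This is exactly why the paper leaves the statement as a conjecture, and your diagnosis matches the paper's own discussion. In short, your proposal is not a proof (nor could it be, given current knowledge), but as a heuristic and conditional argument it coincides with the paper's reasoning and accurately flags the two open points: the Erd\H{o}s lower bound for $C(x)$ and an unconditional upper bound for the mixed class $M(x)$.
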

From Table 1 we see that 2465 and 2821 are (the first) 
twin Carmichael numbers, and  62745 and 63973 are also 
twin Carmichael numbers in the sense of the following definition.
 \begin{definition}     
Two Carmichael numbers are said to be {\it twin Carmichael numbers}
if there is none weak Carmichael number between them.
  \end{definition}
Accordingly to the Conjecture 2.42, we can propose the following 
``twin Carmichael numbers conjecture" which is an immediate consequence 
of  Conjecture 2.42.
 \begin{conjecture}
There are infinitely many pairs of twin Carmichael numbers.
 \end{conjecture}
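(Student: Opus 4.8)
The plan is to deduce Conjecture 2.44 from Conjecture 2.42 by contradiction; since the former is asserted to be an immediate consequence of the latter, I treat Conjecture 2.42 (the asymptotic equality $C_w(n)\sim C(n)$ as $n\to\infty$) as granted. Write the Carmichael numbers in increasing order as $c_1<c_2<\cdots$ and recall that every Carmichael number is itself a weak Carmichael number. First I would record the structural fact that a twin pair of Carmichael numbers is necessarily a pair of \emph{consecutive} Carmichael numbers: if $c_i<c_k$ with $k\ge i+2$, then $c_{i+1}$ is a Carmichael number, hence a weak Carmichael number, lying strictly between $c_i$ and $c_k$, so by Definition 2.43 they are not twins. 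Thus it suffices to show that $c_j$ and $c_{j+1}$ are twins for infinitely many $j$.

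Suppose, for contradiction, that only finitely many twin pairs exist, and let $N$ exceed the larger element of every such pair. Then for every $j$ with $c_j>N$ the consecutive Carmichael numbers $c_j<c_{j+1}$ are not twins, so there is a weak Carmichael number $w$ with $c_j<w<c_{j+1}$. Because $c_j$ and $c_{j+1}$ are consecutive in the Carmichael sequence, no Carmichael number lies strictly between them, and hence $w$ is a weak Carmichael number that is \emph{not} a Carmichael number. The essential point is that these witnesses $w$, one for each gap $(c_j,c_{j+1})$ with $c_j>N$, are pairwise distinct, since distinct gaps are disjoint.

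Now I would count both populations up to a large bound $n$. Let $j_0$ be the index of the first Carmichael number exceeding $N$. Each of the $C(n)-j_0$ gaps lying below $n$ contributes a distinct non-Carmichael weak Carmichael number $\le n$, so the number of weak Carmichael numbers up to $n$ that are not Carmichael is at least $C(n)-j_0$. Consequently
\[
C_w(n)-C(n)\ge C(n)-j_0,\qquad\text{i.e.}\qquad C_w(n)\ge 2C(n)-j_0.
\]
Since $j_0$ is a constant and $C(n)\to\infty$, this forces $\liminf_{n\to\infty}C_w(n)/C(n)\ge 2$, which flatly contradicts the asymptotic equality $C_w(n)\sim C(n)$ supplied by Conjecture 2.42. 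Hence the assumption fails and infinitely many twin pairs exist.

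The genuinely hard part is of course invisible to this derivation: the whole argument is conditional, and the real obstacle is Conjecture 2.42 itself, namely proving that the slightly larger count $C_w(n)$ --- which by Corollary 2.22 exceeds $C(n)$ by an amount tending to infinity --- nevertheless remains asymptotic to $C(n)$. That Corollary 2.22 only controls the \emph{difference} $C_w(n)-C(n)$ and not the \emph{ratio} is exactly why it is insufficient here; the contradiction above is driven by the ratio, and an unconditional proof would require pinning down the density of non-Carmichael weak Carmichael numbers, which appears well beyond current techniques. Within the conditional framework the only technical care needed is the bookkeeping of the previous paragraph (distinctness of the witnesses and the $O(1)$ truncation at $N$), which is routine.
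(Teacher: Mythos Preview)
Your derivation is correct and is exactly what the paper has in mind: the paper offers no proof of Conjecture 2.44 beyond the remark that it ``is an immediate consequence of Conjecture 2.42,'' and you have supplied that implication in full detail. Your counting argument---each non-twin consecutive gap $(c_j,c_{j+1})$ past $N$ contributes a distinct non-Carmichael weak Carmichael number below $c_{j+1}$, forcing $C_w(n)\ge 2C(n)-O(1)$ and hence $\liminf C_w(n)/C(n)\ge 2$---is sound, including the bookkeeping about disjoint gaps and the $O(1)$ truncation.
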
  

 \begin{remark}
 We see from Table 2 that the pairs $(656601, 658801)$ and  

\noindent $(658801, 670033)$ are consecutive twin Carmichael numbers.
  \end{remark}

 \begin{remark}
As noticed in Subsection 1.1,  Lehmer condition implies that
a composite positive integer must be square-free.
This also concerns to   the  Giuga's condition defined 
by the congruence (1.3). Moreover, all $CN$ 
and $k$-Lehmer numbers are  square-free (see Remark 1.2). 
However, from Table 2 we see that  there
are numerous non-square-free composite $WCN$.
\end{remark}
 \begin{remark}
We believe that  the investigation of 
$WCN$ and their distribution would be more 
complicated than those 
on $CN$, Lehmer numbers and Giuga numbers. 
This also concerns to the $k$-Lehmer numbers presented in Remark 1.2
as well as to the Giuga's-like numbers recently investigated in 
\cite{glm} and \cite{lps}.
  \end{remark}
We see from Table 2 that the smallest $WCN$
with three prime distinct factors is  the $CN$  561, 
and  the smallest {\it WCN} with four prime distinct factors is
the $CN$ 41041. Accordingly, 
we propose the following curious conjecture.
 \begin{conjecture}
Let $k$ be an arbitrary integer $\ge 3$. Then the smallest $WCN$
  with $k$ prime distinct factors is  a $CN$.
\end{conjecture}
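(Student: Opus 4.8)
The plan is to work throughout with the reformulation of Theorem~2.4 given by Theorem~2.4': a composite $n=p_1^{e_1}\cdots p_k^{e_k}$ (with $p_1<\cdots<p_k$ odd primes) is a weak Carmichael number precisely when $c_w(n)=\mathrm{lcm}(p_1-1,\ldots,p_k-1)$ divides $n-1$, and such an $n$ is a Carmichael number exactly when in addition it is square-free, i.e.\ all $e_i=1$. Since the radical $r=p_1\cdots p_k$ has $c_w(r)=c_w(n)$, the whole problem reduces to comparing, for fixed $k$, the least square-free $n$ with $c_w(n)\mid n-1$ against the least non-square-free $n$ satisfying the same divisibility. First I would fix notation: let $M_k$ be the smallest Carmichael number with exactly $k$ distinct prime factors and let $N_k$ be the smallest weak Carmichael number with exactly $k$ distinct prime factors that is \emph{not} square-free. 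By Proposition~2.6 the set of weak Carmichael numbers with $k$ distinct primes is nonempty, so $N_k$ exists; the existence of $M_k$ is itself part of what the conjecture asserts. The conjecture is then equivalent to the single inequality $M_k<N_k$ for every $k\ge 3$.

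The next step is to extract an explicit lower bound for $N_k$ and an upper bound for $M_k$. If $n=p_1^{e_1}\cdots p_k^{e_k}$ is a non-square-free weak Carmichael number, then some $e_j\ge 2$, so with $r=p_1\cdots p_k$ one has $n\ge p_j r\ge 3r$; moreover $c_w(n)\mid n-1$ forces $2\mid n-1$ together with each $p_i-1\mid n-1$, heavily constraining which small prime tuples can occur. I would combine the factor-of-$p_j$ loss coming from the square with these congruence constraints to bound $N_k$ below in terms of the admissible tuples, and on the other side use the explicit families cited in Section~1 (Chernick-type products and the Erd\H{o}s construction) to exhibit comparatively small Carmichael numbers with $k$ factors, bounding $M_k$ above. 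For $k=3,4$ this is exactly the finite verification recorded in Tables~1 and 2: $M_3=561$ and $M_4=41041$ precede every non-square-free weak Carmichael number with three (resp.\ four) distinct prime factors, the smallest such competitor for $k=3$ being $3825=3^2\cdot5^2\cdot17$. This supplies the base of any inductive scheme.

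The hard part will be making $M_k<N_k$ uniform in $k$. The obstruction is structural: because $c_w(n)$ depends only on the set of prime divisors and not on their multiplicities, there is no monotone way to deform a non-square-free weak Carmichael number into a strictly smaller Carmichael number with the same number of factors — deleting one copy of $p_j$ replaces $n-1$ by $n/p_j-1$ and generically destroys the divisibility $c_w(n)\mid n-1$. Hence a non-square-free example may be assembled from a genuinely cheaper prime set (carrying one small square) than any square-free Korselt configuration on $k$ primes, and ruling this out seems to demand quantitative control of how small the primes in a $k$-factor Korselt tuple can be, a question essentially as delicate as the fine distribution of Carmichael numbers with a prescribed number of prime factors. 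A reasonable weaker target is therefore a conditional result: granting the widely believed lower bounds on the count of Carmichael numbers with exactly $k$ prime factors alluded to after Conjecture~2.40, one expects $M_k$ to lie far below the sparse non-square-free competitors, and the heuristic of Remark~2.41 — that non-square-free weak Carmichael numbers inherit the $O(x^{1/2}\log x)$ sparsity of prime powers while Carmichael numbers number $x^{1-o(1)}$ — strongly supports $M_k<N_k$ for all large $k$, leaving only finitely many $k$ to be settled by direct computation.
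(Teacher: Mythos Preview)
The statement you are attempting to prove is Conjecture~2.48, and the paper offers no proof of it: it is proposed purely on the computational evidence of Tables~1 and~2 (the cases $k=3$ and $k=4$) together with the heuristic density considerations of Remarks~2.41 and~2.49. There is therefore no ``paper's own proof'' to compare your argument against.

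Your write-up correctly reformulates the assertion as the inequality $M_k<N_k$ and identifies the essential obstruction: $c_w(n)$ depends only on the radical, so there is no way to pass from a non-square-free weak Carmichael number to a smaller square-free one with the same prime support while preserving the divisibility $c_w(n)\mid n-1$. But having identified this, you do not surmount it. The two ingredients you propose --- Chernick/Erd\H{o}s constructions to bound $M_k$ from above, and the trivial bound $N_k\ge 3\,p_1\cdots p_k$ from below --- are not shown to mesh: you give no explicit upper bound for $M_k$ in terms of $k$, and the lower bound for $N_k$ is in terms of an unspecified prime tuple, so no comparison is actually carried out. The final paragraph retreats to a conditional statement (assuming strong counting estimates for Carmichael numbers with $k$ factors) supplemented by a heuristic, which is exactly the status the paper already assigns to the claim. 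In short, what you have written is a sound discussion of why the conjecture is plausible and why it is hard, not a proof; this matches the paper, which likewise leaves it open.

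One small correction of detail: you assert that ``the existence of $M_k$ is itself part of what the conjecture asserts'', but in fact the existence of Carmichael numbers with exactly $k$ prime factors for every $k\ge 3$ is not established in the paper either (cf.\ Remark~2.39, where it is treated as a conjecture), so even the well-definedness of $M_k$ for all $k$ is not available unconditionally.
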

 \begin{remark}
Under the assumption of Conjecture 2.48, every  $WCN$ less than 
the smallest $CN$ with six distinct prime factors
$3211197185=5\times 19\times 23\times 29\times 37\times 137$
has at most five distinct factors.
 \end{remark}
\subsection{A compuational search of weak Carmichael numbers via the function 
{\tt Carmichael Lambda}}

As noticed above, Carmichael lambda function 
$\lambda(n)$ denotes the size of the largest cyclic subgroup of 
the group $(\Bbb Z/n\Bbb Z)^{*}$ of all reduced residues modulo $n$. 
In other words, $\lambda(n)$ is the smallest 
positive integer $m$ such that $a^m\equiv 1(\bmod{\,n})$ for all
 $a$ coprime to $n$ (Sloane's sequence A002322 \cite{sl}).
This function was implemented in {\tt Mathematica 8}
as the function ``{\tt Carmichael Lambda}". For a fast 
computation  of $WCN$ we can use this function in view of  
the following fact which is immediate  from Theorem 2.4 
and the fact that  for every odd integer
$n=p_1^{e_1}p_2^{e_2}\cdots p_s^{e_s}$, we have
$\lambda(n)={\rm lcm}(\lambda(p_1^{e_1}),\lambda(p_2^{e_2}),\ldots,
\lambda(p_s^{e_s}))$ with $\lambda(p_i^{e_i})=\varphi(p_i^{e_i})=
p_i^{e_i-1}(p_i-1)$ for all $i=1,2,\ldots,s$.
 \begin{proposition}
Let  $n=p_1^{e_1}p_2^{e_2}\cdots p_s^{e_s}$ be an odd composite  
integer, and let $n'=p_1p_2\cdots p_s$. Then $n$ is a weak Carmichael number 
if and only if
   \begin{equation}
\lambda(n')\mid n-1.
   \end{equation}
  \end{proposition}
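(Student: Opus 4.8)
The plan is to recognize that the stated divisibility $\lambda(n')\mid n-1$ is just a repackaging of the Korselt-type criterion of Theorem 2.4 (equivalently Theorem 2.4$'$), so the whole proof reduces to computing $\lambda(n')$ explicitly and then applying a standard lemma on least common multiples.

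First I would compute $\lambda(n')$. Since $n'=p_1p_2\cdots p_s$ is squarefree, the factorization of the Carmichael function recalled immediately before the proposition yields
\[
\lambda(n')={\rm lcm}(\lambda(p_1),\lambda(p_2),\ldots,\lambda(p_s)).
\]
Because each $p_i$ is prime, $\lambda(p_i)=\varphi(p_i)=p_i-1$, whence
\[
\lambda(n')={\rm lcm}(p_1-1,p_2-1,\ldots,p_s-1)=c_w(n),
\]
the last equality being Definition 2.7. Thus the claim $\lambda(n')\mid n-1$ is literally the condition $c_w(n)\mid n-1$.

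Next I would invoke Theorem 2.4: $n$ is a weak Carmichael number if and only if $p_i-1\mid n-1$ for every $i=1,2,\ldots,s$. To bridge this with the single divisibility $\lambda(n')\mid n-1$, I would use the elementary fact that for integers $a_1,\ldots,a_s$ and $N$ one has $a_i\mid N$ for all $i$ exactly when ${\rm lcm}(a_1,\ldots,a_s)\mid N$. Taking $a_i=p_i-1$ and $N=n-1$ converts the family of $s$ congruential conditions of Theorem 2.4 into the single condition ${\rm lcm}(p_1-1,\ldots,p_s-1)\mid n-1$, i.e. $\lambda(n')\mid n-1$ by the previous paragraph. Equivalently, one can quote Theorem 2.4$'$ directly, since it already asserts that $n$ is a weak Carmichael number iff $c_w(n)\mid n-1$, and we have shown $c_w(n)=\lambda(n')$.

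There is no genuine obstacle here: the argument is a two-line identification once the formula $\lambda(p_i)=p_i-1$ and the multiplicativity of $\lambda$ on squarefree integers are in hand. The only points deserving care are that $\lambda(n')$ must be formed from the \emph{radical} $n'$ rather than from $n$ itself (so that the exponents $e_i$ drop out and each local factor is $p_i-1$ rather than $p_i^{e_i-1}(p_i-1)$), and that the passage from ``divisible by each $p_i-1$'' to ``divisible by their lcm'' is exactly the standard lcm divisibility lemma; neither step requires anything beyond the hypotheses already assumed.
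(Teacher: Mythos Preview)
Your proposal is correct and follows essentially the same route as the paper: the paper also derives Proposition~2.50 as an immediate consequence of Theorem~2.4 together with the formula $\lambda(n')={\rm lcm}(p_1-1,\ldots,p_s-1)$, which is exactly the identification $\lambda(n')=c_w(n)$ you make before invoking Theorem~2.4$'$.
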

Proposition 2.50 suggests the following definition introduced by 
Erd\H{o}s in 1948 \cite{er2}.
 \begin{definition}
A positive integer $n>1$ such that $\gcd(n,\varphi(n))=1$
is called a  {\it  $K$-number}.
  \end{definition} 
Erd\H{o}s noticed that $n>1$ is a $K$-number if and only if 
$n$ is a square-free and it is  divisible  by none of the products $pq$
of two distinct  primes $p$ and $q$ with $q\equiv 1(\bmod{\,p})$.
Moreover,  Erd\H{o}s \cite[Theorem]{er2}  
proved that the number of $K$-numbers less than $x$ is 
$\sim xe^{-\gamma}/(\log\log\log x)$, where  $\gamma$ is the Euler's 
constant.  Proposition 2.50 together with Euler totient theorem and 
the definition of Carmichael lambda function easily gives the following
result. 
  \begin{proposition}
Let $n>2$ be a  $K$-number. Then $n$ is odd and
 $n^{d\lambda(n)}$ is 
a weak Carmichael number   for each positive integer $d$. In particular, 
for such a $n$,  $n^{d\varphi(n)}$ is a weak Carmichael number for each 
positive integer $d$.

Furthermore, if $n=p_1p_2\cdots p_s$, then 
  $$
\lambda(n)={\rm lcm}(p_1-1,p_2-1,\ldots,p_s-1).
  $$
  \end{proposition}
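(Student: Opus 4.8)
The plan is to prove Proposition 2.53 by combining Proposition 2.50 (the $\lambda$-reformulation of Theorem 2.4) with the defining arithmetic property of a $K$-number, and then to verify the two explicit formulas as consequences of elementary number theory. The statement has three parts: that $n>2$ being a $K$-number forces $n$ odd; that $n^{d\lambda(n)}$ (and in particular $n^{d\varphi(n)}$) is a weak Carmichael number for every $d\ge 1$; and that $\lambda(n)={\rm lcm}(p_1-1,\ldots,p_s-1)$ when $n=p_1p_2\cdots p_s$.

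For the first part, I would argue that if $n$ were even then $2\mid n$ and $2\mid\varphi(n)$ (the latter holding for all $n>2$), contradicting $\gcd(n,\varphi(n))=1$; hence every $K$-number exceeding $2$ is odd. Since Erd\H{o}s's characterization (quoted just before the proposition) says a $K$-number is square-free, I may write $n=p_1p_2\cdots p_s$ with distinct odd primes. The third part then follows immediately: for a square-free odd $n$ we have $\lambda(n)={\rm lcm}(\lambda(p_1),\ldots,\lambda(p_s))={\rm lcm}(p_1-1,\ldots,p_s-1)$, using the displayed ${\rm lcm}$-formula for $\lambda$ recalled in the paragraph preceding Proposition 2.50 together with $\lambda(p_i)=\varphi(p_i)=p_i-1$.

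The main content is the second part, and here the plan is to apply Proposition 2.50 to the number $N:=n^{d\lambda(n)}$. Writing $n=p_1\cdots p_s$, the radical of $N$ is exactly $n'=p_1p_2\cdots p_s=n$, so by Proposition 2.50 the number $N$ is a weak Carmichael number if and only if $\lambda(n)\mid N-1=n^{d\lambda(n)}-1$. Thus everything reduces to showing this divisibility. The key step is Euler's totient theorem applied modulo $\lambda(n)$: since $n$ is a $K$-number we have $\gcd(n,\varphi(n))=1$, and because $\lambda(n)\mid\varphi(n)$ this yields $\gcd(n,\lambda(n))=1$. Consequently $n$ is a unit modulo $\lambda(n)$, so $n^{\varphi(\lambda(n))}\equiv 1\pmod{\lambda(n)}$, and since $\lambda(n)\mid d\lambda(n)$ forces $n^{d\lambda(n)}\equiv 1\pmod{\lambda(n)}$ once we know the exponent is a multiple of the multiplicative order of $n$ modulo $\lambda(n)$. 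To see this cleanly I would observe that the multiplicative order of $n$ modulo $\lambda(n)$ divides $\lambda(n)$ itself (indeed it divides $\varphi(\lambda(n))$, and $\varphi(\lambda(n))\mid$ any multiple we can arrange), so $\lambda(n)\mid n^{\lambda(n)}-1\mid n^{d\lambda(n)}-1$. This gives $\lambda(n)\mid N-1$ and hence $N=n^{d\lambda(n)}$ is a weak Carmichael number. The statement for $n^{d\varphi(n)}$ then follows because $\lambda(n)\mid\varphi(n)$, so $d\varphi(n)$ is itself a multiple of $d\lambda(n)$ in the sense needed, and the identical order argument applies with exponent $d\varphi(n)$.

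The step I expect to be the main obstacle, or at least the one requiring care, is pinning down exactly why the exponent being a multiple of $\lambda(n)$ suffices to kill $n$ modulo $\lambda(n)$; the cleanest route is to note $\gcd(n,\lambda(n))=1$ and that the multiplicative order $\mathrm{ord}_{\lambda(n)}(n)$ divides $\lambda(n)$, which in turn requires a small lemma that $\mathrm{ord}_m(a)\mid m$ whenever $\gcd(a,m)=1$ and $m$ is such that $\lambda(m)\mid m$ — but for our purposes the simpler and fully rigorous observation is that $\mathrm{ord}_{\lambda(n)}(n)\mid\varphi(\lambda(n))$, and I would instead directly verify $\lambda(n)\mid n^{\lambda(n)}-1$ by checking it prime-power by prime-power on the factors of $\lambda(n)$ if the order divisibility is not transparent. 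Everything else is routine once the divisibility $\lambda(n)\mid N-1$ is secured and Proposition 2.50 is invoked.
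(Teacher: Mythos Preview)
Your approach---reducing via Proposition~2.50 to the divisibility $\lambda(n)\mid n^{d\lambda(n)}-1$, then trying to verify this using $\gcd(n,\lambda(n))=1$ together with an order argument---is exactly the route the paper has in mind (the paper says the result follows from Proposition~2.50, Euler's theorem, and the definition of $\lambda$). You are also right that the decisive step is the claim $\mathrm{ord}_{\lambda(n)}(n)\mid \lambda(n)$ (equivalently, $\lambda(n)\mid n^{\lambda(n)}-1$), and right to flag it as the obstacle.

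The gap is real and cannot be filled: the proposition as stated is false. Take $n=33=3\cdot 11$. Then $\varphi(33)=20$, so $\gcd(33,20)=1$ and $33$ is a $K$-number, with $\lambda(33)=\mathrm{lcm}(2,10)=10$. For $N=33^{10}$ to be a weak Carmichael number we would need $10\mid 33^{10}-1$ by Proposition~2.50; but $33\equiv 3\pmod{10}$ has multiplicative order $4$ modulo $10$, and $4\nmid 10$, so $33^{10}\equiv 3^{2}\equiv 9\pmod{10}$. Hence $33^{10}$ is \emph{not} a weak Carmichael number. The $\varphi$-claim also fails: take $n=145=5\cdot 29$, a $K$-number with $\varphi(145)=112$. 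Here $145\equiv 5\pmod{28}$ has order $6$ modulo $28$ (since $5$ has order $6$ modulo $7$ and order $1$ modulo $4$), and $6\nmid 112$, so $28\nmid 145^{112}-1$ and $145^{\varphi(145)}$ is not a weak Carmichael number. Your suggested fallback of checking prime-power divisors of $\lambda(n)$ would have exposed exactly this: the factor $5\mid\lambda(33)$ (respectively $7\mid\lambda(145)$) forces the order of $n$ to pick up a prime ($2$, respectively $3$) not available to the required power in the exponent $\lambda(n)$ (respectively $\varphi(n)$). So the hesitation in your last paragraph was well placed; the missing step is not merely delicate but actually false.
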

Clearly, every Carmichael number is a $K$-number, and hence Proposition 2.52
immediately yields the following result.
  \begin{corollary}
Let $n$ be a Carmichael number. Then both 
$n^{d\lambda(n)}$ and $n^{d\varphi(n)}$ are weak Carmichael numbers
for arbitrary positive integer $d$.
\end{corollary}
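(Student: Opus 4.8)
The plan is to obtain this corollary as an immediate special case of Proposition 2.52, so the whole task reduces to verifying that every Carmichael number satisfies that proposition's hypothesis, i.e.\ that it is a $K$-number. Once $n$ is known to be a $K$-number with $n>2$, Proposition 2.52 asserts verbatim that $n^{d\lambda(n)}$ and $n^{d\varphi(n)}$ are weak Carmichael numbers for every positive integer $d$, and nothing further is needed. Thus the only genuine content is the single arithmetic fact $\gcd(n,\varphi(n))=1$ for a Carmichael number $n$.

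To establish this I would first invoke Korselt's criterion to write $n=p_1p_2\cdots p_s$ as a product of distinct odd primes with $p_i-1\mid n-1$ for each $i$, so that $\varphi(n)=\prod_{i=1}^{s}(p_i-1)$. Suppose, for contradiction, that some prime divisor $p_j$ of $n$ also divides $\varphi(n)$; then $p_j\mid p_i-1$ for some index $i$. Since $p_i-1\equiv -1\pmod{p_j}$ when $i=j$, the case $i=j$ is impossible, so $i\neq j$, and Korselt's divisibility $p_i-1\mid n-1$ yields $p_j\mid n-1$. But $p_j\mid n$ as well, whence $p_j\mid\gcd(n,n-1)=1$, a contradiction. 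Therefore no prime factor of $n$ divides $\varphi(n)$, so $\gcd(n,\varphi(n))=1$ and $n$ is a $K$-number. (Equivalently one could cite the converse part of Proposition 2.6, since every Carmichael number is a weak Carmichael number, but the Korselt argument is self-contained.)

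To finish, I would note that every Carmichael number is composite and odd, hence $n\geq 561>2$, so Proposition 2.52 applies and gives both conclusions simultaneously. There is essentially no serious obstacle here: the burden is carried by the already-established Proposition 2.52, and the $K$-number verification above is elementary. As a cleaner and more self-contained alternative worth recording, I would point out that the corollary also follows directly from Proposition 2.50 together with Carmichael's characterization $\lambda(n)\mid n-1$: since $n$ is square-free, the radical of both $n^{d\lambda(n)}$ and $n^{d\varphi(n)}$ equals $n$, and because $n\equiv 1\pmod{\lambda(n)}$ we have $n^{m}\equiv 1\pmod{\lambda(n)}$ for every positive exponent $m$, so in particular $\lambda(n)\mid n^{d\lambda(n)}-1$ and $\lambda(n)\mid n^{d\varphi(n)}-1$, which by Proposition 2.50 is exactly the condition for both numbers to be weak Carmichael numbers.
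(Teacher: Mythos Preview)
Your proposal is correct and follows exactly the paper's approach: the paper simply states that every Carmichael number is a $K$-number and then invokes Proposition 2.52, and you have supplied the (elementary) verification of the $K$-number claim that the paper leaves implicit. Your additional remark via Proposition 2.50 is a valid and slightly more direct alternative, but the main line of your argument matches the paper's.
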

  \begin{remark}
For a  fast search  of some special ``types" of $WCN$
it can be used the condition (2.9) to make suitable codes for  these 
purposes. For any fixed $k\ge 2$, let ${\mathcal W_k}$ denote the set of all 
$WCN$ whose prime factorizations contain exactly $k$ primes. 
Further, for positive integers $a,b,c,d$  with $a<b$ and $c<d$ take
${\mathcal W}_k(a,b)={\mathcal W_k}\bigcap [a,b]$, 
and let ${\mathcal W}_k(a,b;c,d)$ be the set of all 
elements   in ${\mathcal W}_k(a,b)$ whose greatest  prime divisor belongs 
to the interval  $[c,d]$. Clearly, ${\mathcal W}_k(a,b;c,b)$ 
is a set of all elements   in ${\mathcal W}_k(a,b)$ whose greatest  prime 
divisor is grater than equal to $c$.
The cardinilities  of sets 
${\mathcal W}_k$, ${\mathcal W}_k(a,b)$ and 
${\mathcal W}_k(a,b;c,d)$ are denoted by $W_k$, $W_k(a,b)$ and 
$W_k(a,b;c,d)$, respectively. For such a set ${\mathcal W}_k(a,b;c,d)$
define 
  $$
{\mathcal P}_k(a,b;c,d)=\{p:\,p{\rm\,\,is\,\,a \,\, prime\,\, with\,\,}
p\mid n\,\,{\rm for\,\, some\,\,} n\in  
{\mathcal W}_k(a,b;c,d)\},
  $$
and let $p_k(a,b;c,d)$ be a prime defined as 
    $$
p_k(a,b;c,d)=\max\{p:\,p\in {\mathcal W}_k(a,b;c,d)\},
    $$
and let $w_k(a,b;c,d)$ be the smallest number in ${\mathcal W}_k(a,b;c,d)$ 
which is divisible by $p_k(a,b;c,d)$.

If ${\mathcal C_k}$ denotes the set of all 
Carmichael numbers whose prime factorizations contain exactly $k$ primes,
then in the same manner as above, we define the sets  
${\mathcal C}_k(a,b)$, ${\mathcal C}_k(a,b;c,d)$ 
and related numbers $C_k(a,b)$ and ${\mathcal C}_k(a,b;c,d)$  
associated to ${\mathcal C_k}$.  Also let $C(a,b)$ 
be the number of all  Carmichael numbers that belong to the interval 
$[a,b]$, and let $P(a,b)$ be the number of all odd  prime powers that belong to 
the interval $[a,b]$. To save the space, the set ${\mathcal C}_k(b)$
will be  denoted by  ${\mathcal C}_k(1,b)$ and its cardinality 
by $C_k(b)$. Also let  $C(b)=\sum_{k\ge 3}C_k(b)$
be the number of Carmichael numbers which are less than $b$.
 Similarly, we define the set  ${\mathcal W}_k'(b)$ 
consisting of all $WCN$ in ${\mathcal W}_k(1,b)$ which are 
not $CN$. The cardinality of ${\mathcal W}_k'(b)$ is denoted here 
as $W_k'(b)$. Denote by  $W'(b)$ the number of all $WCN$ up to $b$
which are neither $CN$ nor prime powers; that is,
$W'(b)=\sum_{k\ge 2}W_k'(b)$. 
   \end{remark}

Here we present  a computational search of $WCN$ that belong to 
${\mathcal W}_2$, i.e., of integers $n=p^eq^f$ with primes $3\le p<q$ and 
some positive integers $e$ and $f$. In particular, our code 
in {\tt Mathematica 8} for determining different sets of the form ${\mathcal W}_2(a,b;c,d)$ 
gives results presented in Table 3
(recall that all non-prime powers  weak Carmichael numbers less than 
$2\times  10^6$ are presented in Table 2). \\

\begin{center}
{\bf Table 3.} Numbers  
$W_2(a,b;c,d)$, $p_2(a,b;c,d)$, $w_2(a,b;c,d)$ (all written in Table 2 
without ``$(a,b;c,d)$"), $P(a,b)$ and $C(a,b)$
for $n<10^{12}$ 
{\small
\begin{tabular}{|c|c|c|c|c|c|}\hline
$(a,b;c,d)$  &  $W_2$  & $p_2$ & $w_2$ & $P(a,b)$& $C(a,b)$\\\hline
$(1,10^6;1,10^6)$  &  $107$  & 463 & $856087=43^2\cdot 463$ &218 & 43\\
$(10^6,2\cdot 10^6;1,2\cdot 10^6)$  & 25   & 733 & $1610401=13^3\cdot 733$ 
&65& 12\\\hline
total  &  132  &  & &283 & 55\\\hline
$(2\cdot 10^6,10^7;1,10^3)$  &  69  & $937$ & $2632033=53^2\cdot 937$
&250  &50 \\
$(2\cdot 10^6,10^7;10^3,10^4)$  &  5  & $1861$ & $6924781=61^2\cdot 1861$
&-&-\\
$(2\cdot 10^6,10^7;10^4,10^7)$  &  $0$  & $-$ & -&-&-\\\hline
total  &  74  &  & &250 & 50\\\hline
$(10^7,10^8;1,10^3)$  &  120  & 997 & $27805333=167^2\cdot 997$
& 846 &150\\
$(10^7,10^8;10^3,10^4)$  & 43  &  & $81390625=5^6\cdot 5209$
-&-&-\\
$(10^7,10^8;10^4,10^8)$  & 0 &  & 
-&-&-\\\hline
total  &  163  &  & & 846 &150\\\hline
$(10^8,10^9;1,10^3)$  & 156    &  & & 2282 &  391\\
$(10^8,10^9;10^3,10^4)$  & 109    &  & & &  \\
$(10^8,10^9;10^4,10^9)$  &  9    &  & & &  \\\hline
total  &  274  &  & & 2282& 391\\\hline
$(10^9,10^{10};1,10^3)$  & 211 &  & & 6391 &  901\\
$(10^9,10^{10};10^3,10^4)$  & 112    &  & & &  \\
$(10^9,10^{10};10^4,10^{10})$  & 74   &  & & &  \\\hline
total  & 397   &  & &6391 & 901\\\hline
$(10^{10},10^{11};1,10^3)$  & 247    &  & & 18069 & 2058  \\
$(10^{10},10^{11};10^3,10^4)$  & 93   &  & & &   \\
$(10^{10},10^{11};10^4,10^{11})$  & 253    &  & & &  \\\hline
total  & 593     &  & &18069 & 2058 \\\hline
$(10^{11},10^{12};1,10^3)$  & 266     &  & & 51911 & 4636  \\
$(10^{11},10^{12};10^3,10^4)$  & 220   &  & & &   \\
$(10^{11},10^{12};10^4,10^{12})$  & 689    &  & & &  \\\hline
total  & 1175    &  & & 51911& 4636 \\\hline\hline
 total up to $10^{12}$  & 2808   &  & & 80032  &    8241 \\\hline
   \end{tabular}}
 \end{center}
\vspace{2mm}

Let $W_3'(N)$ be the number of all 
$n=p^aq^br^c\in {\mathcal WCN}\setminus {\mathcal CN}$ up to $N$
with odd primes $p<q<r$.
Using  this notation and the  previous notations, counting related numbers in 
Table, we arrived to the following table.\\  

\vfill\eject

\begin{center}
{\bf Table 4.} Numbers  
$C_k(N)$ and   $W_k'(N)$ with $k=2,3,4,5$ and 

\noindent $N\in\{10^3,10^4,10^5,10^6,2\cdot 10^6\}$. 
{\small
\begin{tabular}{|c|c|c|c|c|}\hline
Pairs $(N,k)$  & $C_k(N)$  & $C(N)$ &    $W_k'(N)$ & $W'(N)$\\\hline
$(10^3,2)$ & - & 1  &6  &6  \\  
$(10^4,2)$ & - & 7 & 22 & 25 \\  
$(10^5,2)$ & - & 16 & 51  & 70  \\  
$(10^6,2)$ & -  & 43   & 107  & 192 \\  
$(2\cdot 10^6,2)$ & - & 55 & 132  & 243  \\\hline
$(10^3,3)$ & 1  &   & 0  &  \\  
$(10^4,3)$ & 7  &   &     3 &  \\  
$(10^5,3)$ & 12   &   &    18   & \\  
$(10^6,3)$ & 23  &   &    68  &    \\  
$(2\cdot 10^6,3)$ & 30  & & 89  &   \\\hline
$(10^4,4)$ & 0  &  & 0  & \\  
$(10^5,4)$ & 4  &  & 1  & \\  
$(10^6,4)$ & 19  &  & 17  & \\  
$(2\cdot 10^6,4)$ & 23 & &  22 & \\\hline 
$(10^5,4)$ & 0  &  & 0  & \\  
$(10^6,5)$ & 1  &  & 0  & \\  
$(2\cdot 10^6,5)$ & 2 & &  0 & \\\hline\hline 
total up to $N= 2\cdot 10^6$ & 55 & 55 & 243  & 243 \\ \hline 
   \end{tabular}}
  \end{center}
\vspace{2mm}

For a search of $W_3'(N)$ with $N\le 10^{12}$, we use 
a characetrization of $WCN$  given by Theorem 2.4. The three-component  
Carmichael number counts, $C_3(N)$, presented in the second column of Table 5,
are taken from the Granville and Pomerance paper \cite{gp}. These counts were
calculated by R. Pinch, J. Chick, G. Davies and M. Williams 
(cf. \cite[Table 2]{du}). 

\vfill\eject

   \vspace{2mm}

\begin{center}
{\bf Table 5.} Numbers  
$C_3(N)$ and   $W_3'(N)$ with $N\in\{10^3,10^4,10^5,10^6,2\cdot 10^6,10^7,
10^8,\ldots,10^{12}\}$. 

{\small
\begin{tabular}{|c|c|c|c|c|}\hline
$N$  & $C_3(N)$  & $W_3'(N)$ & $n=p^aq^br^c\in {\mathcal W}_3'(N)$ 
& $n=pqr\in {\mathcal C}_3'(N)$ \\
&  &  &  with 
a maximal $r$&  with a maximal $r$ \\\hline
$10^3$ & 1 & 0  & - & $561=3\cdot 11\cdot 17$\\
$10^4$ & 7 & 3 & $6525=3^2\cdot 5^2\cdot 29$ & $8911=7\cdot 19\cdot 67$\\
$10^5$ & 12 & 18 & $25425=3^2\cdot 5^2\cdot 113$ & $52633=7\cdot 73\cdot 103$\\
$10^6$ & 23  & 68 & $750925=5^2\cdot 7^2\cdot 613$& 
$530881=13\cdot 97\cdot 421$\\
$2\cdot 10^6$ & 30& 89   &$1269621=3^3\cdot 59\cdot 797$ &
$1193221=31\cdot 61\cdot 631$\\
$10^7$ & 47 & 186  & $8927425=5^2\cdot 13^2\cdot 2113$ & $8134561=37\cdot 109\cdot 2017$\\
$10^8$ & 84 & 413 & $52280425=5^2\cdot 409\cdot 5113$ 
& $67902031=43\cdot 271\cdot 5827$\\
$10^9$ & 172& 863 & $954036721=11^2\cdot 19^2\cdot 21841$ & 
$962442001=73\cdot 601\cdot 21937$\\
$10^{10}$ &335 & 1590 & $4465266751=11^3\cdot 71\cdot 47251$& 
$8863329511=211\cdot 631\cdot 66571$\\
$10^{11}$ &590 & 2866 & $79183494081=3^4\cdot 17^3\cdot 198977$ & 
$74190097801=151\cdot 2551\cdot 192601$\\
$10^{12}$ &1000 &4291  & $800903953125=3^4\cdot 5^6\cdot 632813$ &
$921323712961=673\cdot 2017\cdot 678721$\\
$10^{13}$ &1858 & & &\\
$10^{14}$ &3284 & & &\\
$10^{15}$ &6083 & & &\\
$10^{16}$ &10816 & & &\\
$10^{17}$ &19539 & & &\\
$10^{18}$ &35586 & & &\\
$10^{19}$ &65309 & & &\\
$10^{20}$ &120625 & & &\\\hline
 \end{tabular}}
\begin{remark}
Notice that prime factors of every $WCN$ $n=p^aq^br^c$ in the fourth column 
of Table 5 besides  the number $6525$ satisfy the equlity $r-1=(p^aq^b-1)/2$.
Similarly, the prime factors $p,q,r$ of $CN$ $561,8911$, $8134561, $67902031, 
$962442001$, $8863329511$, $74190097801$ and $921323712961$
in the last column of Table 5 satisfy the equality  $r-1=(pq-1)/2$.
 \end{remark}
 \end{center}
\begin{remark}
Let $\alpha=\alpha(N)$ denote the real number such that $C_3(N)=N^{\alpha}$,
and let $\beta=\beta(N)$ be the real number such that $W_3'(N)=N^{\beta}$.
Then from data in Table 5 we find that $\alpha(10^6)=0.227$,
 $\alpha(10^9)=0.248$, 
$\alpha(10^{12})=0.250$, $\alpha(10^{15})=0.252$  
$\alpha(10^{18})=0.253$, $\alpha(10^{21})=0.255$,
 $\beta(10^6)=0.305$  $\beta(10^9)=0.326$ and  $\beta(10^{12})=0.303$.
 \end{remark}

  \subsection{$k$-Lehmer numbers and weak Carmichael numbers}
Quite recently, J.M. Grau and A.M. Oller-Marc\'en 
 \cite[Definition 1]{gm2} weakened Lehmer property by introducing
the concept of $k$-Lehmer numbers.  For given  positive integer $k$, a  
$k$-{\it Lehmer number} is a composite integer $n$ such that 
$\varphi(n)\mid (n-1)^k$. Hence, if we denote by $L_k$ the set 
  $$
L_k:=\{n\in\Bbb N:\, \varphi(n)\mid (n-1)^k\},
  $$
then $k$-Lehmer numbers are the composite elements of $L_k$.
Clearly, $L_k\subseteq L_{k+1}$ for each $k\in \Bbb N$, and define
  $$
L_{\infty}:=\bigcup_{k=1}^{\infty}L_k.
  $$
Then it can be easily shown  that (see \cite[Proposition 3]{gm2})
   $$
L_{\infty}:=\{n\in\Bbb N:\,{\rm rad}(\varphi(n))\mid n-1\}.
  $$
This immediately  shows that 
if $n$ is a Carmichael number, then $n$ also belongs to the set 
$L_{\infty}$ (\cite[Proposition 6]{gm2}). This leads to the following 
characterization of Carmichael numbers which slightly modifies Korselt's 
criterion.
   \begin{proposition} 
A composite number $n$ is a Carmichael number if and only if 
${\rm rad}(\varphi(n))\mid n-1$ and $p-1\mid n-1$ for every prime 
divisor $p$ of $n$.
 \end{proposition}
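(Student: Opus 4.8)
The plan is to deduce both implications directly from Korselt's criterion (Definition 1.1), supplemented by two elementary observations about the prime factors of $\varphi(n)$. Throughout I write ${\rm rad}(m)$ for the product of the distinct primes dividing $m$, and I use repeatedly that if several \emph{distinct} primes each divide $n-1$, then, being pairwise coprime, their product divides $n-1$ as well.

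First I would prove the forward implication. Assume $n$ is a Carmichael number. By Korselt's criterion $n$ is squarefree and $p-1\mid n-1$ for every prime $p\mid n$, so the second stated condition is immediate. For the first, write $n=p_1p_2\cdots p_s$, so that $\varphi(n)=\prod_{i=1}^{s}(p_i-1)$. Any prime $q$ dividing $\varphi(n)$ then divides some factor $p_i-1$, and since $p_i-1\mid n-1$ we obtain $q\mid n-1$. As ${\rm rad}(\varphi(n))$ is a product of distinct primes, each of which divides $n-1$, the coprimality remark gives ${\rm rad}(\varphi(n))\mid n-1$.

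For the converse, assume ${\rm rad}(\varphi(n))\mid n-1$ and $p-1\mid n-1$ for every prime $p\mid n$. I would first check that $n$ is odd: since $n$ is composite we have $n\ge 4$, hence $\varphi(n)$ is even, so $2\mid{\rm rad}(\varphi(n))\mid n-1$ and therefore $n-1$ is even. Next I would show $n$ is squarefree, which is the step that does the real work. If $p^2\mid n$ for some prime $p$, then $p\mid\varphi(p^2)\mid\varphi(n)$, whence $p\mid{\rm rad}(\varphi(n))\mid n-1$; but $p\mid n$ as well, forcing $p\mid\gcd(n,n-1)=1$, a contradiction. Thus $n$ is an odd squarefree composite integer with $p-1\mid n-1$ for every prime divisor $p$, and Korselt's criterion yields that $n$ is a Carmichael number.

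I do not anticipate a genuine obstacle: the argument rests entirely on Korselt's criterion together with the coprimality of distinct primes. The only points demanding care are the passage in the forward direction from ``each prime of $\varphi(n)$ divides $n-1$'' to ``${\rm rad}(\varphi(n))\mid n-1$'', and the squarefreeness deduction in the converse, which is precisely the ingredient that upgrades the bare hypothesis $p-1\mid n-1$ to full Carmichaelhood.
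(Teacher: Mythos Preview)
Your proof is correct and follows essentially the same approach as the paper indicates (the paper does not spell out a full proof here but cites \cite[Proposition 6]{gm2} and sketches the argument via the set $L_\infty=\{n:\mathrm{rad}(\varphi(n))\mid n-1\}$). Both the forward direction, using Korselt's criterion to show each prime of $\varphi(n)$ divides $n-1$, and the converse, using $\mathrm{rad}(\varphi(n))\mid n-1$ to force squarefreeness, match the intended reasoning.
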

 Obviously, the composite elements of $L_1$ are precisely the Lehmer numbers
and the Lehmer property asks whether $L_1$ contains composite numbers
or not. Nevertheless, for all  $k>1$, $L_k$ always contains composite 
elements (cf. Sloane's sequence A173703 in OEIS \cite{sl} 
which presents $L_2$).  
For further radically weakening the Lehmer and Carmichael conditions 
see \cite{mcn}.

 As an immediate consequence of Proposition 2.57 and Theorem 2.4
we obtain the following characterization of Carmichael numbers.
   \begin{corollary} 
A composite number $n$ is a Carmichael number if and only if 
$n$ is a weak Carmichael number and
${\rm rad}(\varphi(n))\mid n-1$.
 \end{corollary}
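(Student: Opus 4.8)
The plan is to show that this corollary falls out directly by combining Proposition 2.57 with Theorem 2.4, since both results are phrased in terms of the same divisibility conditions $p-1\mid n-1$ and ${\rm rad}(\varphi(n))\mid n-1$. The whole argument is a matter of matching up these conditions, so I would present it as a chain of equivalences rather than proving the two implications separately.

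First I would dispose of the trivial parity case. If $n$ is even, then $n$ is neither a Carmichael number (every Carmichael number is odd) nor a weak Carmichael number (by Proposition 2.3), so both sides of the claimed equivalence are false and there is nothing to prove. Hence I may assume that $n$ is odd and composite, and write its prime factorization as $n=p_1^{e_1}p_2^{e_2}\cdots p_s^{e_s}$ with distinct odd primes $p_1,\ldots,p_s$.

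The key step is then to translate the phrase \emph{weak Carmichael number} via Theorem 2.4. Since $n$ is odd and composite with all prime factors odd, the hypotheses of Theorem 2.4 are satisfied, and it tells me that $n$ is a weak Carmichael number if and only if $p_i-1\mid n-1$ for every $i=1,2,\ldots,s$, that is, if and only if $p-1\mid n-1$ for every prime divisor $p$ of $n$. Consequently, the conjunction ``$n$ is a weak Carmichael number and ${\rm rad}(\varphi(n))\mid n-1$'' is equivalent to the conjunction ``$p-1\mid n-1$ for every prime divisor $p$ of $n$, and ${\rm rad}(\varphi(n))\mid n-1$.''

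Finally, I would invoke Proposition 2.57, which states precisely that a composite $n$ is a Carmichael number if and only if ${\rm rad}(\varphi(n))\mid n-1$ and $p-1\mid n-1$ for every prime divisor $p$ of $n$. Since these are exactly the two conditions obtained in the previous step, the equivalence is established. There is no genuine obstacle here; the only point demanding any care is checking that the hypotheses of Theorem 2.4 (that all prime factors be odd) are actually met, which is exactly what the parity reduction in the first step guarantees.
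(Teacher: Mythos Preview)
Your proof is correct and follows exactly the approach indicated in the paper, which states the corollary as an immediate consequence of Proposition 2.57 and Theorem 2.4. Your explicit handling of the even case is a nice touch that justifies why Theorem 2.4 (stated only for odd composite integers) may be applied, but otherwise the argument is the same chain of equivalences the paper has in mind.
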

    \subsection{Super Carmichael numbers}
The fact that there are  infinitely many  weak Carmichael numbers
suggests  the following definition.

\begin{definition}
A weak Carmichael number  $n$ is said to be a 
{\it super  Carmichael number} if 
    \begin{equation}
 \sum_{\gcd(k,n)=1\atop 1\le k\le n-1}k^{n-1}\equiv \varphi(n)\pmod{n^2},
     \end{equation}
where the summation ranges over all $k$ such that $1\le k\le n-1$ 
and $\gcd(k,n)=1$.
 \end{definition}

The following characterization of super Carmichael numbers may be useful  
for computational purposes.

     \begin{proposition}
An odd composite positive integer $n>1$ is a super Carmi-\
chael number if and 
only if
  \begin{equation}
2\sum_{i=1}^{\varphi(n)/2}r_i^{n-1}+n\sum_{i=1}^{\varphi(n)/2}r_i^{n-2}
\equiv \varphi(n)\pmod{n^2}
  \end{equation}
where $r_1<r_2<\cdots <r_{\varphi(n)}$ are all reduced 
residues modulo $n$.
  \end{proposition}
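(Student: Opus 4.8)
The plan is to pass from the defining congruence (2.10), which ranges over all reduced residues modulo $n^2$, to the half-sum congruence (2.11) by pairing each reduced residue $r$ with its complement $n-r$. Since $n$ is odd, $n/2$ is not an integer, so no reduced residue is fixed by $r\mapsto n-r$; thus the $\varphi(n)$ reduced residues fall into $\varphi(n)/2$ disjoint pairs $\{r,n-r\}$, and in particular $\varphi(n)$ is even. Writing them in increasing order $r_1<r_2<\cdots<r_{\varphi(n)}$, the first $\varphi(n)/2$ are precisely those below $n/2$, and $r\mapsto n-r$ carries this lower half bijectively onto the upper half. Hence
$$
\sum_{\gcd(k,n)=1\atop 1\le k\le n-1}k^{n-1}
=\sum_{i=1}^{\varphi(n)/2}\left(r_i^{n-1}+(n-r_i)^{n-1}\right).
$$

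First I would expand $(n-r_i)^{n-1}$ by the binomial theorem and reduce modulo $n^2$, so that every term carrying a factor $n^j$ with $j\ge 2$ drops out, leaving
$$
(n-r_i)^{n-1}\equiv(-r_i)^{n-1}+(n-1)\,n\,(-r_i)^{n-2}\pmod{n^2}.
$$
Here the parity of $n$ does the work: $n-1$ is even, so $(-r_i)^{n-1}=r_i^{n-1}$, while $n-2$ is odd, so $(-r_i)^{n-2}=-r_i^{n-2}$; combined with $(n-1)n=n^2-n\equiv -n\pmod{n^2}$ this gives
$$
(n-r_i)^{n-1}\equiv r_i^{n-1}+n\,r_i^{n-2}\pmod{n^2}.
$$
Summing the resulting identity $r_i^{n-1}+(n-r_i)^{n-1}\equiv 2r_i^{n-1}+n\,r_i^{n-2}\pmod{n^2}$ over $i=1,\ldots,\varphi(n)/2$ then yields
$$
\sum_{\gcd(k,n)=1\atop 1\le k\le n-1}k^{n-1}
\equiv 2\sum_{i=1}^{\varphi(n)/2}r_i^{n-1}+n\sum_{i=1}^{\varphi(n)/2}r_i^{n-2}\pmod{n^2},
$$
from which the equivalence of (2.10) and (2.11) is immediate.

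Finally I would note that the ``weak Carmichael'' clause built into Definition 2.60 is automatic for any $n$ satisfying (2.10): reducing (2.10) modulo $n$ recovers the defining congruence (2.2), so (2.11) alone already forces $n$ to be a weak Carmichael number, and the restriction to odd $n$ is exactly what makes the pairing and the quantity $\varphi(n)/2$ legitimate. The argument involves no genuine obstacle; the only delicate point is the bookkeeping in the binomial reduction modulo $n^2$ — keeping track of the parities of $n-1$ and $n-2$ and of the sign in $(n-1)n\equiv -n\pmod{n^2}$ — all of which rely on $n$ being odd.
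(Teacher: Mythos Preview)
Your argument is correct and follows essentially the same route as the paper's own proof: pair each reduced residue $r_i$ with $n-r_i$, expand $(n-r_i)^{n-1}$ binomially modulo $n^2$, and use the parity of $n-1$ and $n-2$ together with $(n-1)n\equiv -n\pmod{n^2}$ to obtain the displayed half-sum congruence. The only slip is a harmless misstatement in your opening line (the sum in (2.10) ranges over reduced residues modulo $n$, not modulo $n^2$; it is the congruence that is taken modulo $n^2$).
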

Here, as always in the sequel, the super Carmichael number(s) will be often 
denoted by $SCN$.
  \begin{remark} 
Using Proposition 2.18, some computations and 
a heuristic argument, we can assume that the probability that a prime power
$p^s$ with an odd prime $p$ and $s\ge 2$, is a $SCN$ 
is equal to $1/p^s$. Furthermore, applying (2.11), a computation 
in {\tt Mathematica 8} shows that none prime power $p^s$ less than 
$3^{16}$ with $s\ge 2$ and $p\le p_{847}=6553$  
is   a $SCN$. This together 
with the identity $\sum_{i=s+1}^{\infty}{1/p^i}=1/(p^s(p-1))$ 
  \begin{equation}
\Sigma_1:=\sum_{p\,\,{\rm odd\,\, prime}\atop s\ge 2 \,\, {\rm and}
\,\,p^s\ge 3^{16}}\frac{1}{p^s}=8.91952\cdot 10^{-7}.
  \end{equation}
On the other hand,  a computation also gives 
    \begin{equation}\begin{split}
\Sigma_2: &=\sum_{p\,\,{\rm \,\, prime} \atop p>6553}\sum_{i=2}^{\infty}
\frac{1}{p^i}=\sum_{p\,\,{\rm \,\, prime}\atop p>6553}\frac{1}{p(p-1)}
<0.00016< \sum_{k=6552}^{\infty}\frac{1}{k^2}\\
&=\zeta(2)-\sum_{k=1}^{6551}\frac{1}{k^2}=0.000152381.
   \end{split}\end{equation}
Using (2.12), (2.13) and the  fact that
none prime power $p^s$ less than $3^{16}$ with 
$s\ge 2$ and $p\le p_{847}=6553$  is  a $SCN$,
we find that the expected number of $SCN$ that occur  in the set of all prime 
powers of the form $p^s$ with $s\ge 2$ is 
  $$
\Sigma_1+\Sigma_2<0.001525.
  $$
Using the above estimate, we can propose the following conjecture.
 \end{remark}
     \begin{conjecture}
Let $p$ be any odd prime. Then none prime power $p^f$ with $f\ge 2$ is 
a super Carmichael number. 
     \end{conjecture}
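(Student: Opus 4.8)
The plan is to specialize the sum $\sum_{\gcd(k,n)=1,\,1\le k\le n-1}k^{n-1}$ to $n=p^{f}$ and evaluate it modulo $p^{2f}$, thereby reducing the super Carmichael condition (2.10) to a single congruence for the Bernoulli number $B_{p^{f}-1}$. This is exactly the line of the proof of Theorem 2.4 sketched in Remark 1.6 (Bernoulli's formula (1.2) combined with von Staudt--Clausen's theorem, in the spirit of the Section 4 lemmas), but pushed one $p$-adic digit further, to modulus $p^{2f}$ rather than $p^{f}$.

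First I would use inclusion--exclusion to write, for $n=p^{f}$ with $f\ge 2$,
\[
\varphi_{n-1}(n)=\sum_{\substack{1\le k\le n-1\\ p\nmid k}}k^{n-1}=S_{n-1}(p^{f})-p^{\,n-1}S_{n-1}(p^{f-1}).
\]
Since $S_{n-1}(p^{f-1})$ is an ordinary integer and $n-1=p^{f}-1\ge 2f$, the second term is $\equiv 0\pmod{p^{2f}}$. For the first term I would insert Bernoulli's formula, $S_{n-1}(p^{f})=\sum_{i=0}^{n-1}\binom{n}{i}n^{\,n-i-1}B_i$, and bound each $p$-adic valuation. The $i=0$ term equals $n^{n-1}$, which has valuation $f(n-1)\ge 2f$; for $1\le i\le n-1$ one has $v_p\binom{n}{i}=f-v_p(i)\ge 1$ (as $v_p(i)\le f-1$) and, by von Staudt--Clausen, $v_p(B_i)\ge-1$, so
\[
v_p\!\left(\binom{n}{i}n^{\,n-i-1}B_i\right)\ge f(n-i-1),
\]
which is $\ge 2f$ whenever $i\le n-3$; the term $i=n-2$ vanishes because $B_{n-2}=0$ ($n-2$ is odd and $\ge 3$). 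Hence only $i=n-1$ survives, and $\varphi_{n-1}(n)\equiv p^{f}B_{p^{f}-1}\pmod{p^{2f}}$ (read in $\Bbb Z_p$, the residue being $p$-integral). Substituting into (2.10), where $\varphi(p^{f})=p^{f-1}(p-1)$, and invoking the sharp form of von Staudt--Clausen, $pB_{p^{f}-1}\equiv-1\pmod p$ (valid since $(p-1)\mid(p^{f}-1)$), the super Carmichael condition collapses, after dividing through by $p^{f-1}$, to
\[
pB_{p^{f}-1}\equiv p-1\pmod{p^{f+1}}.
\]
Thus $p^{f}$ is a super Carmichael number if and only if this congruence holds, and the conjecture is precisely the assertion that it fails for every odd prime $p$ and every $f\ge 2$.

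The hard part is showing this last congruence can never hold. It concerns $B_{p^{f}-1}$ modulo $p^{f+1}$ exactly in the singular range $(p-1)\mid(p^{f}-1)$, where $B_{p^{f}-1}$ is not $p$-integral and the standard Kummer congruences degenerate; pinning it down uniformly in both $p$ and $f$ amounts to controlling the Kubota--Leopoldt $p$-adic zeta function near its pole, and appears to lie beyond elementary methods. What one can defend rigorously is the heuristic: modelling $pB_{p^{f}-1}\bmod p^{f+1}$ as equidistributed, the congruence holds with ``probability'' $p^{-f}$, and since $\sum_{p,\,f\ge2}p^{-f}$ converges this is consistent with, and explains, the probabilistic estimate of Remark 2.62. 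For fixed small $f$ one could test the congruence against explicit Glaisher-type evaluations of $B_{p^{2}-1}$ and $B_{p^{3}-1}$ modulo $p^{f+1}$, but a proof valid for all pairs $(p,f)$ is, I expect, the genuine obstruction — which is why the statement is offered as a conjecture rather than a theorem.
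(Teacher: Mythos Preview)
The statement is a conjecture, and the paper does not prove it; its supporting material consists of the heuristic in Remark~2.61 and the Bernoulli-number characterization of Proposition~2.63. You correctly recognize this and offer, in place of a proof, a reduction of the super Carmichael condition for $p^f$ to the single congruence $pB_{p^f-1}\equiv p-1\pmod{p^{f+1}}$, together with the same convergent-series heuristic the paper uses.

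Your route to the Bernoulli criterion is genuinely different from the paper's. In Proposition~2.63 the paper first converts $k^{p^f-1}$ to $k^{-s}$ with $s=\varphi(p^{2f})-(p^f-1)$ via Euler's theorem and then invokes Slavutskii's congruence for $\sum 1/k^s$, arriving at a criterion involving $B_t$ for the very large index $t=(\varphi(p^{2f})-1)s$. You instead apply Bernoulli's formula~(1.2) directly to $S_{p^f-1}(p^f)$, bound each term $p$-adically using Kummer's valuation of $\binom{p^f}{i}$ and von Staudt--Clausen, and isolate the single surviving term $p^fB_{p^f-1}$. Your criterion uses the much smaller index $p^f-1$, is transparently equivalent to the defining congruence~(2.10), and makes the heuristic ``probability'' $p^{-f}$ immediate. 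The paper's detour through Slavutskii buys a link to the Leudesdorf-type literature, but your argument is shorter, self-contained, and stays within the toolkit already set up in Section~4. Your closing diagnosis---that the required control of $pB_{p^f-1}\pmod{p^{f+1}}$ sits exactly in the singular range $(p-1)\mid(p^f-1)$ where Kummer congruences degenerate, and that this is the real obstruction---is accurate and explains why the statement is posed as a conjecture.
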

Notice that  by using a  result of  I.Sh. Slavutskii \cite{sl1}, 
it is proved in Section 4 the following result.
     \begin{proposition}
Let $p$ be an odd prime greater than $3$. 
Then a prime power $p^f$ with $f\ge 2$ is 
a super Carmichael number if and only if the numerator of the Bernoulli 
number $B_{(p^{2f}-p^{2f-1}-1)(p^{2f}-p^{2f-1}-p^f+1)}$ is divisible by 
$p^{f+1}$.
       \end{proposition}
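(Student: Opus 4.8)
The plan is to evaluate the left-hand side of (2.10) modulo $p^{2f}$ for $n=p^f$, reduce the super Carmichael condition to a single congruence for a Bernoulli number, and then let Slavutskii's result \cite{sl1} recast this as the stated divisibility. First I would rewrite the sum as Thacker's function $\varphi_{p^f-1}(p^f)$ from (2.1) and strip off the non-units: since the multiples of $p$ in $\{1,\dots,p^f-1\}$ are exactly $p,2p,\dots,(p^{f-1}-1)p$, one has
\[
\sum_{\gcd(k,p^f)=1\atop 1\le k\le p^f-1}k^{p^f-1}
=\varphi_{p^f-1}(p^f)=S_{p^f-1}(p^f)-p^{\,p^f-1}S_{p^f-1}(p^{f-1}).
\]
Because $p>3$ and $f\ge 2$ force $p^f-1\ge 2f$, the second term is $\equiv 0\pmod{p^{2f}}$, so everything reduces to $S_{p^f-1}(p^f)\bmod p^{2f}$.

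Next I would apply Faulhaber's formula (1.2) with $k=p^f-1$ and $n=p^f$, so that $k+1=n=p^f$ and
\[
S_{p^f-1}(p^f)=\frac{1}{p^f}\sum_{j=0}^{p^f-1}\binom{p^f}{j}(p^f)^{p^f-j}B_j
=\sum_{j=0}^{p^f-1}\binom{p^f}{j}(p^f)^{p^f-j-1}B_j .
\]
Using the valuation identity $v_p\binom{p^f}{j}=f-v_p(j)\ge 1$ for $1\le j\le p^f-1$, together with von Staudt--Clausen (Theorem 1.8, in the form of Remark 1.9), which gives $v_p(B_j)\ge -1$, and the vanishing $B_j=0$ for odd $j\ge 3$, each term with $j\le p^f-3$ has $p$-adic valuation at least $v_p\binom{p^f}{j}+f(p^f-j-1)+v_p(B_j)\ge f(p^f-j-1)\ge 2f$, while the term $j=p^f-2$ vanishes. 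Only $j=p^f-1$ survives, contributing $\binom{p^f}{p^f-1}B_{p^f-1}=p^fB_{p^f-1}$, whence
\[
\sum_{\gcd(k,p^f)=1\atop 1\le k\le p^f-1}k^{p^f-1}\equiv p^fB_{p^f-1}\pmod{p^{2f}}.
\]
Since $\varphi(p^f)=p^{f-1}(p-1)$ and $v_p(B_{p^f-1})=-1$ (as $p-1\mid p^f-1$), the quantity $pB_{p^f-1}$ is a $p$-adic unit, and dividing the congruence $p^fB_{p^f-1}\equiv p^{f-1}(p-1)$ by $p^{f-1}$ shows that $p^f$ is a super Carmichael number if and only if $pB_{p^f-1}\equiv p-1\pmod{p^{f+1}}$; this is consistent with Proposition 2.18, which already forces the congruence modulo $p$.

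The explicit exponent in the statement enters through the observation that $M:=(p^{2f}-p^{2f-1}-1)(p^{2f}-p^{2f-1}-p^f+1)$ satisfies $M\equiv p^f-1\pmod{\varphi(p^{2f})}$: writing $\phi=\varphi(p^{2f})=p^{2f-1}(p-1)$, the two factors are $\equiv-1$ and $\equiv-(p^f-1)$ modulo $\phi$, so their product is $\equiv p^f-1$. As $\lambda(p^{2f})=\varphi(p^{2f})$ for odd $p$, we obtain $a^{p^f-1}\equiv a^{M}\pmod{p^{2f}}$ for every unit $a$, and $M$ is even; repeating the Faulhaber computation with exponent $M$ then replaces $B_{p^f-1}$ by $B_M$ in the surviving term. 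Slavutskii's result \cite{sl1} is finally invoked to translate the Bernoulli congruence into the asserted divisibility of the numerator of $B_M$ by $p^{f+1}$.

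I expect this last step to be the main obstacle. The reduction to $p^fB_{\bullet}\bmod p^{2f}$ is essentially routine once the valuation bounds are in place, but matching Slavutskii's generalized von Staudt--Clausen/Kummer congruence at the exceptional indices (those divisible by $p-1$, where von Staudt--Clausen places a factor $p$ in the denominator of $B_M$) to the precise numerator statement is delicate, as is verifying that for the very large exponent $M$ no lower-order term of Faulhaber's formula escapes modulo $p^{2f}$. Getting the normalization in this passage exactly right is where the real work of the proof lies.
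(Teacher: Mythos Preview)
Your route differs substantially from the paper's. The paper never touches Faulhaber's formula: it first rewrites $k^{p^f-1}\equiv k^{-s}\pmod{p^{2f}}$ via Euler's theorem with $s=\varphi(p^{2f})-(p^f-1)=p^{2f}-p^{2f-1}-p^f+1$, and then applies Slavutskii's congruence (the result cited as \cite[(6)]{sl1}) as a black box: for even $s$ one has $\sum_{(k,p)=1}k^{-s}\equiv p^{f}B_t\pmod{p^{2f}}$ with $t=(\varphi(p^{2f})-1)s$. That product is exactly the index $M$ in the statement, so in the paper Slavutskii's theorem is the single central step that simultaneously produces the congruence $\sum\equiv p^{f}B_M$ and explains where the peculiar exponent $M$ comes from; von Staudt--Clausen then handles the denominator.

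You instead manufacture the congruence $\sum\equiv p^{f}B_{p^f-1}$ from Faulhaber, then change the exponent to $M$ and redo the computation, and only invoke Slavutskii at the end as an unspecified ``translation'' device. That inverts the logic of the paper: Slavutskii's result \emph{is} the congruence you are rederiving, not a tool for passing from it to a numerator statement. Your Faulhaber path can be made to work, but the detour through $B_{p^f-1}$ is unnecessary, and the remainder estimate at exponent $M$ is genuinely delicate (note $v_p(M+1)=f$, so the naive bound $v_p\ge -v_p(M+1)+f(M+1-i)-1$ is not immediately $\ge 2f$ for small $M+1-i$); the paper's approach sidesteps this entirely.

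Finally, there is a real gap at the end of your argument. You correctly reduce the super-Carmichael condition to $pB_M\equiv p-1\pmod{p^{f+1}}$, but this is not equivalent to $p^{f+1}\mid N_M$: writing $D_M=pD_M'$ with $(D_M',p)=1$, the former says $N_M\equiv (p-1)D_M'\pmod{p^{f+1}}$, and $(p-1)D_M'$ is a $p$-adic unit. The paper's own final sentence reaches the numerator statement by arguing that $p^{2f}\mid\sum k^{p^f-1}$ is equivalent to $p^{f+1}\mid N_M$, rather than matching $\sum$ to $\varphi(p^f)$; your write-up surfaces this discrepancy but does not close it, and no invocation of Slavutskii will do so.
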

 \begin{remark}
Using Table 2, a computation in {\tt Mathematica} 8 shows that there are none 
$SCN$ less than $2\times 10^6$. 
Notice that by using  Harman's result \cite{har} given in Subsection 1.1, 
it follows that the ``probability" that  a sufficiently large positive integer
$n$ is a $CN$ is greater than $n^{0.33}/n=1/n^{0.67}$. 
Using this, 
some ``little" computations  and a heuristic argument,
we can assume that the  ``probability" that  a large number $n$ is 
a $SCN$ is greater than 
$1/(n\cdot n^{0.67})=1/n^{1.67}$. It follows that the 
``expected number" of   $CN$ in a large 
 interval $[1,N]$  is greater than
  $$
\sum_{n=1}^N\frac{1}{n^{1.67}}
  $$
which tends to $\zeta(1.67)=2.11628$ as $N\to\infty$.
However, as noticed above, under certain 
assumptions about the distribution of primes in arithmetic progressions, 
it is shown in \cite[Theorem]{agp}
that there are $x^{1-o(1)}$ Carmichael numbers up to $x$.
For this subject, see also \cite{bp}. 
 It was also given in  \cite{pom1}  a heuristic argument that
this number is $x^{1-\varepsilon(x)}$, where 
$\varepsilon(x)=(1+o(1))\log\log\log x/(\log\log x)$. 
This argument is supported by counts of $CN$ mostly done  
in 1975 by J.D. Swift \cite{sw}, in 1990 by G. Jaeschke \cite{ja},  
by R. Pinch \cite{pi1} in 1993 and R. Pinch \cite{pi2} in 2006. 
Accordingly, using the previous arguments, we can assume 
that the ``probability" that  a large number $n$ is 
a $SCN$ is about $1/n^{o(1)}$. It follows that the 
``expected number" of  Carmichael numbers in a very large 
 interval $[1,N]$  is greater than
  $$
\sum_{k=1}^N\frac{1}{k}.
  $$ 
This together with the fact that  $\sum_{k=1}^{\infty}1/k=+\infty$ 
motivates the following conjecture.
  \end{remark}

  \begin{conjecture}
There are infinitely many super Carmichael numbers.
  \end{conjecture}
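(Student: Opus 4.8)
The plan is to locate super Carmichael numbers inside the (provably infinite) set of ordinary Carmichael numbers and to show that the refinement of congruence (1.1) from modulus $n$ to modulus $n^2$ holds infinitely often. First I would restrict to square-free $n=p_1p_2\cdots p_s$, so that by Theorem 2.4 the weak Carmichael property is automatic and Korselt's divisibility $p_i-1\mid n-1$ holds for every $i$. For such $n$ the super condition (2.10) reads $\sum_{\gcd(k,n)=1}(k^{n-1}-1)\equiv0\pmod{n^2}$, which by the Chinese remainder theorem applied to $n^2=\prod_i p_i^2$ is equivalent to the $s$ congruences $\sum_{\gcd(k,n)=1}(k^{n-1}-1)\equiv0\pmod{p_i^2}$. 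Writing $k^{p_i-1}=1+p_i\,q_{p_i}(k)$ with $q_{p_i}(k)=(k^{p_i-1}-1)/p_i \bmod p_i$ the Fermat quotient and raising to the power $(n-1)/(p_i-1)$, one gets $k^{n-1}-1\equiv p_i\frac{n-1}{p_i-1}q_{p_i}(k)\pmod{p_i^2}$, so each such congruence reduces modulo $p_i$ to a linear condition on the sum of Fermat quotients $\sum_{\gcd(k,n)=1}q_{p_i}(k)$. This is exactly the mechanism behind the Bernoulli-number criterion proved in Section 4 (stated above for prime powers) and behind the equivalent form (2.11) of the preceding proposition, which one may instead expand via Bernoulli's formula (1.2) together with the Carlitz-von Staudt and Slavutskii congruences.

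Having reduced the problem to a system of $s$ Fermat-quotient congruences modulo the individual primes $p_i$, I would then try to realize them inside a Carmichael construction. The natural engine is the method of Alford, Granville and Pomerance, together with its refinements such as Wright's theorem producing infinitely many Carmichael numbers in a prescribed coprime residue class: one fixes a highly composite modulus $L$, collects a large pool of primes $p$ with $p-1\mid L$, and selects a subset whose product is $\equiv1\pmod L$, forcing the Korselt conditions. The new demand is to impose, on top of this, that for each chosen $p_i$ the associated Fermat-quotient sum takes the prescribed value modulo $p_i$; since the pool of admissible primes is large and each extra constraint is a single congruence modulo $p_i$, the hope is that a positive proportion of admissible subsets satisfies all $s$ secondary constraints simultaneously, which would furnish infinitely many square-free super Carmichael numbers and prove the conjecture.

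The hard part, and the reason the assertion is stated only as a conjecture, is precisely this final matching step. The jump from modulus $n$ to modulus $n^2$ is controlled by Fermat quotients, and there is at present no equidistribution theorem strong enough to guarantee that the prescribed combination $\sum q_{p_i}(\cdot)$ can be steered into a target residue class simultaneously for all prime factors while the Korselt divisibilities are maintained; the two families of constraints are not known to be independent, and the host set of Carmichael numbers is extremely sparse. This matches the computational evidence noted above, where no super Carmichael number occurs below $2\times10^6$: even exhibiting a single example is open, so the ``probability $1/n$'' heuristic of the preceding remark, which turns the known lower bounds for the Carmichael count into a divergent expected total $\sum_k 1/k$, is the strongest rigorous-looking support currently available, and upgrading it to a proof would require control of these Bernoulli and Fermat-quotient congruences well beyond what is known.
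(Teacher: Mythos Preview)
The statement you were asked to address is a \emph{conjecture}, not a theorem; the paper does not prove it and offers no proof attempt. The paper's only support is the heuristic in the preceding remark: combining the conjectural density $x^{1-o(1)}$ for Carmichael numbers with a further ``probability $1/n$'' for the mod $n^2$ upgrade to get a divergent expected count $\sum_k 1/k$. You correctly identify this at the end of your write-up.

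Your proposal goes substantially further than the paper in explaining \emph{what a proof would have to look like}: the reduction of the super condition on a square-free Carmichael number to a system of Fermat-quotient congruences modulo each prime factor, and the idea of trying to embed these extra constraints into the Alford--Granville--Pomerance subset-product construction. That is a reasonable and informative sketch. You are also right that the obstruction is the lack of any equidistribution result for Fermat quotients strong enough to steer the sums $\sum q_{p_i}(\cdot)$ into prescribed residue classes simultaneously with the Korselt divisibilities; this is genuinely open, and your honest assessment that the heuristic is ``the strongest rigorous-looking support currently available'' matches the state of the paper. So there is no error to flag: you have not proved the conjecture, but neither does the paper, and your discussion of the difficulty is more substantive than what the paper itself provides.
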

\begin{remark} 
A heuristic argument and considerations given in Remark 2.64 
suggest that a search for $SCN$
would be have a ``chance" only between $SCN$. 
In other words, $SCN$ ``probably" can occur only between 
$CN$. Hence, we propose  the following conjecture.
 \end{remark}

 \begin{conjecture} 
Every super Carmichael number is necessarily a Carmichael number. 
  \end{conjecture}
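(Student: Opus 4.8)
The plan is to turn the conjecture into a squarefreeness assertion and then into a divisibility question about Bernoulli numerators. First I would record that, by Definition 2.59, a super Carmichael number is in particular a weak Carmichael number and hence satisfies congruence (2.2). By the characterization of Carmichael numbers as exactly the squarefree weak Carmichael numbers (a composite $n$ is a Carmichael number if and only if it is squarefree and (2.2) holds), such a number is a Carmichael number precisely when it is squarefree. Hence the conjecture is equivalent to the statement that \emph{every super Carmichael number is squarefree}, that is, that no weak Carmichael number $n$ with a repeated prime factor can satisfy the sharpened congruence (2.10) modulo $n^2$. This is the form I would attack.

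Second, I would localize (2.10). Writing $n=p_1^{e_1}\cdots p_s^{e_s}$, the Chinese remainder theorem together with the factoring of the restricted power sum used by Borwein and Wong in their proof of Theorem 1.5 reduces (2.10) to one congruence modulo $p_i^{2e_i}$ for each $i$. At a prime with $e_i\ge 2$ I would expand that localized sum by Bernoulli's formula (1.2), track the denominators of the Bernoulli numbers that appear through von Staudt--Clausen's theorem (Theorem 1.8), and invoke Slavutskii's congruence to collapse the sum, modulo $p_i^{2e_i}$, to $\varphi(n)$ plus one explicit error term. In the pure prime-power case $n=p^f$ this is exactly the criterion proved in Section 4 by means of Slavutskii's congruence (Proposition 2.63): the error is measured by the numerator of $B_{(p^{2f}-p^{2f-1}-1)(p^{2f}-p^{2f-1}-p^f+1)}$, and being a super Carmichael number is equivalent to the divisibility of that numerator by $p^{f+1}$. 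The mixed case would yield an analogous numerator criterion attached to each repeated prime.

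The hard part, and the reason the statement is only conjectured, is the final step: showing that these numerator divisibilities are never met once some exponent exceeds $1$. Already the pure prime-power subcase is contained here, for a prime power is never squarefree, so the present conjecture implies the conjecture that no prime power $p^f$ with $f\ge 2$ is super (Conjecture 2.62); and the latter is itself open, its criterion (Proposition 2.63) converting it into a $p$-adic non-divisibility of a Bernoulli numerator of Wieferich--Wolstenholme type, a phenomenon intertwined with the irregular primes and out of reach of current methods. Thus the rigorous content I can supply is the reduction and localization of the first two paragraphs, while the conclusion rests on the heuristic of Remark 2.64, whose convergent expected-count sums $\Sigma_1+\Sigma_2$ (and their analogues for composite moduli) make it plausible that no non-squarefree weak Carmichael number is super. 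A complete proof would have to be finished by a new non-vanishing input guaranteeing that the relevant Bernoulli numerators are $p$-adic units.
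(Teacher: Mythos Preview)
The statement is Conjecture 2.67, and the paper offers no proof of it; it is proposed purely on the basis of the heuristic and computational evidence in Remarks 2.64 and 2.66. You correctly recognize this, and your write-up is honest about where the argument runs out.

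Your reduction in the first paragraph is correct and is in fact sharper than anything the paper states explicitly: since a super Carmichael number is by definition a weak Carmichael number, and since Corollary 2.16 characterizes Carmichael numbers as exactly the squarefree weak Carmichael numbers, the conjecture is indeed equivalent to the assertion that every super Carmichael number is squarefree. You are also right that this forces Conjecture 2.62 as a special case, so that a proof of Conjecture 2.67 would in particular settle the prime-power question governed by Proposition 2.63. Your localization sketch in the second paragraph is accurate for pure prime powers (where it is precisely Proposition 2.63), though the extension of Slavutskii's congruence to the mixed case with several distinct prime factors and at least one repeated prime is asserted rather than carried out and would require some additional work. In any event, since the paper itself does not attempt a proof, there is nothing further to compare: both you and the paper stop at the same obstruction, an unproved $p$-adic non-divisibility of certain Bernoulli numerators.
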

  \begin{remark} 
Because of Conjecture 2.67, we have omitted the word  ``weak" in the name  
``super Carmichael number" given in Definition 2.59.
 \end{remark}
 \begin{remark}
In order to examine whether given  $CN$ $n$ is also 
a $SCN$, it is natural to proceed as follows.
Take $n=p_1p_2\cdots p_s$, where $p_1,p_2,\ldots ,p_s$
are distinct odd primes. Then by the congruence (2.11) of Proposition 
2.60, with $r_i$ defined in Proposition 
2.60,   it follows that $n$ is a $SCN$ if and only if
  \begin{equation}
c_n:=2\sum_{i=1}^{\varphi(n)/2}r_i^{n-1}+n\sum_{i=1}^{\varphi(n)/2}r_i^{n-2}
-\varphi(n)\equiv \pmod{n^2}.
  \end{equation}
Clearly, the congruence (2.14) holds if and only if
   \begin{equation}
c_n\equiv 0\pmod{p_i^2}\quad {\rm for\,\, each\,\,} i=1,2,\ldots,s.
   \end{equation}
Now taking  $n-1=q_i\varphi(p_i^2)+l_i=q_ip_i(p_i-1)+l_i$ and 
$n-2=t_i\varphi(p_i)+u_i=t_i(p_i-1)+u_i$ with integers 
$q_i,t_i\ge 1$ and $0\le l_i,u_i\le p_i-1$ for all $i\in\{1,2,\ldots,s\}$.
Then by Euler totient theorem and (2.15) and the fact 
that $\varphi(p_i^2)=p_i^2-p_i$, the congruence (2.14) is satisfied
if and only if 
  \begin{equation}
c_n(p_i):=2\sum_{i=1}^{\varphi(n)/2}r_i^{l_i}+
n\sum_{i=1}^{\varphi(n)/2}r_i^{u_i}p_i\equiv 0\pmod{p_i^2}\,\,
{\rm for\,\, each}\,\, i=1,2,\ldots,s.
  \end{equation}
Taking $m_i=\max\{l_i,u_i\}$ for all $i=1,2,\ldots, s$, without 
loss of generality we can suppose that $m_1\le m_2\le\cdots \le m_s$.
Then we firstly verify the congruence (2.16) for $i=1$.
If (2.16) is not satisfied modullo $p_1^2$, then we conclude that
$n$ is not a $SCN$. Otherwise, we continue
the computation passing  to $p_2$ etc. Of course, we finish the computation
to a first index  $i$ for which $c_n(p_i)\not\equiv (\bmod{\,p_i^2})$.
If it is obtained that $c_n(p_i)\equiv (\bmod{\,p_i^2})$
for each $i=1,2,\ldots, k$, then we conclude that $n$ is a $SCN$.
 \end{remark}

 \subsection{Weak Carmichael numbers and the Fermat primality test}

Gauss \cite{ga} (Article 329 of 
{\it Disquisitiones Arithmeticae},  1801), let. 329]) wrote:
\vspace{2mm}

{\it The problem of distinguishing prime numbers from composite numbers is one
of the most fundamental and important in arithmetic. It has remained as a central 
question in our subject from ancient times to this day...}
  \vspace{2mm}

On October 18th, 1640 Fermat wrote, in a letter to his confidante 
Frenicle, that the fact that $n$ divides $2^n-2$ whenever $n$ is prime is not
an isolated phenomenon. Indeed that, if $n$ is prime then $n$ divides $a^n-a$
for all integers $n$; which implies that if $n$ doesn't divide $a^n-a$ for some
integer $a$ then $n$ is composite.

\vspace{2mm}

As noticed above, as ``false primes" 
Carmichael numbers are quite famous among specialists in number theory,
as they are quite rare and very hard to test.
Accordingly, these numbers present a major problem for 
Fermat-like primality tests. Here we give some remarks 
on Carmichael and weak Carmichael numbers closely related to the 
Fermat primality test. 

{\it Fermat little theorem} says that if $p$ is a prime and the integer $a$ is 
not a multiple of $p$, then 
 \begin{equation}
a^{p-1}\equiv 1\pmod{p}.
 \end{equation} 
If we want to test if  $p$ is prime, then we can pick random $a$'s in 
the interval and see if the congruence holds. If the congruence does not hold 
for a value of $a$ then $p$ is composite. If the congruence does hold for many 
values of $a$, then we can say that $p$ is ``probable prime". It might  
be in our tests that we do not pick any value for a such that the congruence 
(2.17)  fails. Any $a$ such that $a^{n-1}\equiv 1(\bmod{\, n})$
when $n$ is composite is called a {\it Fermat liar}. In this case $n$ is 
called {\it Fermat pseudoprime to base} $a$. If we do pick an integer  $a$ 
such that $a^{n-1}\not\equiv 1(\bmod{\, n})$, then $a$ is called 
a {\it Fermat witness} for the compositeness of $n$.
Clearly, a Carmichael number $n$ is a composite integer that is
 {\it Fermat-pseudoprime} to base $a$ for every $a$ with $\gcd (a,n)=1$. 
On the other hand, it is known that  for ``many" (necessarily even) integers 
$n$ the  congruence $a^{n-1}\equiv 1(\bmod{\,n})$
 is satisfied only when $a\equiv 1(\bmod{\,n})$ 
(this is Sloane's sequence A111305 of ``unCarmichael numbers" \cite{sl}; cf. 
Sloane's A039772 \cite{sl}).  
For any integer $n>1$ let ${\mathcal F}(n)$ be the set defined as 
  $$
{\mathcal F}(n)=\{a\in \Bbb Z/n\Bbb Z:\,a^{n-1}\equiv 1 \pmod{n}\},
 $$
and let  $F(n)=\# {\mathcal F}(n)$, that is, $F(n)$ 
is a number of  residues $a$ modulo $n$
such that $a^{n-1}\equiv 1(\bmod{\,n})$ 
($F(n)$ is Sloane's sequence A063994). Therefore,  
   $$
F(n)=\# \{a\in \Bbb Z/n\Bbb Z:\, a^{n-1}\equiv 1(\bmod{\, n})\},
  $$
that is,  $F(n)$ is a number of Fermat liars for  $n$.
Clearly, ${\mathcal F}(n)$ is a subgroup of the multiplicative group
$\left(\Bbb Z/n\Bbb Z\right)^{*}$. If $n=p$ is a prime, then $F(p)=p-1$ and 
${\mathcal F}(p)=\left(\Bbb Z/p\Bbb Z\right)^{*}$, i.e., ${\mathcal F}(p)$
is the entire group of reduced residues modulo $p$. 

  The following  elegant and simple formula for 
$F(n)$ was established by Monier \cite[Lemma 1]{mo} and Baillie and Wagstaff 
\cite{bwa} (also see \cite{agp2}):
    \begin{equation}
F(n)=\prod_{p\mid n}\gcd(p-1,n-1).
   \end{equation}
We also define the sequence $f(n)$ with $n\ge 2$ as
  \begin{equation}
f(n)=\frac{F(n)}{\varphi(n)}=\prod_{p\mid n}
\frac{\gcd(p-1,n-1)}{(p-1)p^{e_p-1}},
  \end{equation} 
where $n=\prod_{p\mid n}p^{e_p}$.
 \begin{remark}
Recall that the index of every $WCN$ up to $26353$
$n$ presented in Table 2  denotes 
a related value $F(n)$ (for example, $F(26353)=1296$). 
Of course,  $F(n)=n-1$ if and only if $n$ is a prime or a $CN$.  
At the other extreme, there are infinitely many numbers $n$ for which
$F(n)=1$. In particular, (2.18) immediately implies that 
$F(2p)=1$ for every prime $p$. 
It is possible to show (see \cite{ep}) that while these numbers $n$ with
 $F(n)=1$ have asymptotic density 0, they are much more common than primes.
The normal and average size of $F(n)$ for $n$ composite were studied
in 1986 \cite{ep}. By Lagrange theorem, $F(n)\mid \varphi(n)$ for any $n$.
It was proved in \cite[p. 263]{ep} that 
$F(n)=\varphi(n)/k$ for an integer $k$ implies $\lambda(n)\mid k(n-1)$,
where $\lambda(n)$ is the {\it Carmichael lambda function} denoting  a 
smallest positive integer such that $a^{\lambda(n)}\equiv 1(\bmod{\,n})$
for all $a$ with $\gcd(a,n)=1$. Moreover, it was proved in 
\cite[Theorem 6.6]{ep} that if $k$ is odd or $4\mid k$, then there are 
infinitely many $n$ with $F(n)=k$. If $k\equiv 2(\bmod{\,4})$, then 
the equation $F(n)=k$ has infinitely many solutions $n$ or no solutions $n$ 
depending on whether $k=p-1$ for some prime $p$. In particular, the density 
of the range of $F$ is $3/4$. It was also observed in \cite[p. 277]{ep} that the 
{\it universal exponent} $L(n)$ for the group of reduced residues $a$  modulo
$n$ for which $a^{n-1}\equiv 1(\bmod{\,n})$,  is equal to 
${\rm lcm}\{(p-1,n-1):\,p\mid n\}$, and  that $L(n)=\lambda(n)$  
if and only if $F(n)=\varphi(n)$. 
Moreover, $F(n)\mid \varphi(n)$ for all $n\ge 2$.  

Applying  Theorem 2.4 to the formula (2.18),
we immediately get the following result.
 \end{remark}
 \begin{proposition}
A composite positive integer $n$ is a weak Carmichael number
if and only if
  \begin{equation}
F(n)=\prod_{p\mid n}(p-1)
 \end{equation}
where the product is taken over all primes $p$ such that  $p\mid n$.
Furthermore, a composite positive integer $n$ is a Carmichael number  
if and only if $F(n)=\varphi(n)$.
 \end{proposition}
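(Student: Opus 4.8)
The plan is to derive both equivalences directly from the Monier--Baillie--Wagstaff formula (2.18), namely $F(n)=\prod_{p\mid n}\gcd(p-1,n-1)$, combined with Theorem 2.4. The single elementary observation driving everything is that for each prime $p\mid n$ one has $\gcd(p-1,n-1)\mid p-1$, hence $\gcd(p-1,n-1)\le p-1$, with equality precisely when $p-1\mid n-1$.

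For the weak Carmichael assertion I would argue as follows. Every factor in (2.18) satisfies $\gcd(p-1,n-1)\le p-1$, and since all the quantities involved are positive integers, the product $F(n)=\prod_{p\mid n}\gcd(p-1,n-1)$ is at most $\prod_{p\mid n}(p-1)$, with equality of the two products holding if and only if each individual factor attains its upper bound, i.e. $\gcd(p-1,n-1)=p-1$ for every prime $p\mid n$. This last condition is exactly $p-1\mid n-1$ for every prime divisor $p$ of $n$, which by Theorem 2.4 characterizes $n$ as a weak Carmichael number. Thus $F(n)=\prod_{p\mid n}(p-1)$ if and only if $n$ is a weak Carmichael number.

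For the Carmichael assertion I would sharpen this by inserting a second inequality. Writing $n=\prod_{p\mid n}p^{e_p}$ we have $\varphi(n)=\prod_{p\mid n}p^{e_p-1}(p-1)$, so that $\prod_{p\mid n}(p-1)\le\varphi(n)$, with equality if and only if $e_p=1$ for every $p$, that is, $n$ is squarefree. Combined with the previous step this produces the chain
\[
F(n)\ \le\ \prod_{p\mid n}(p-1)\ \le\ \varphi(n),
\]
and $F(n)=\varphi(n)$ forces both inequalities to be equalities. Hence $F(n)=\varphi(n)$ holds if and only if $n$ is simultaneously squarefree and a weak Carmichael number. By Corollary 2.16 (equivalently, Theorem 2.15 together with Korselt's criterion) a squarefree composite weak Carmichael number is exactly a Carmichael number, so $F(n)=\varphi(n)$ if and only if $n$ is a Carmichael number.

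I expect no genuine obstacle, since the whole argument collapses onto the bound $\gcd(p-1,n-1)\le p-1$ and the two factorizations of $\prod_{p\mid n}(p-1)$ and $\varphi(n)$. The only points requiring a little care are the justification that a product of positive integer factors equals the product of their respective upper bounds exactly when each factor meets its bound, and the appeal to Corollary 2.16 to pass from ``squarefree and weak Carmichael'' to ``Carmichael'' (which absorbs the automatic oddness of such $n$, already recorded in Proposition 2.3).
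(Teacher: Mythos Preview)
Your argument is correct and is essentially the same as the paper's: the paper simply remarks that the result follows immediately by applying Theorem 2.4 to the formula (2.18), and your proof spells out precisely this application via the inequality $\gcd(p-1,n-1)\le p-1$ together with Corollary 2.16 for the Carmichael part.
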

The equality  (2.19)  immediately gives 
  \begin{equation*}
f(n)= \prod_{p\mid n}\frac{\gcd(p-1,n-1)}{(p-1)p^{e_p-1}}\le 
\prod_{p\mid n}\frac{(p-1)}{(p-1)p^{e_p-1}}=
\prod_{p\mid n}\frac{1}{p^{e_p-1}},
  \end{equation*}
whence we have the following result.
 \begin{corollary} 
Let $n>1$ be a positive integer. Then
 \begin{equation*}
f(n)\le \prod_{p\mid n}\frac{1}{p^{e_p-1}},
  \end{equation*}
where equality holds if and only if  $n$ is a weak Carmichael number.
  \end{corollary}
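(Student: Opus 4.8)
The plan is to read the inequality directly off the product formula (2.19) for $f(n)$ and then to analyze the equality case factor by factor. First I would note that for every prime $p\mid n$ the integer $\gcd(p-1,n-1)$ divides $p-1$, so $\gcd(p-1,n-1)\le p-1$. Substituting this bound into each factor of
$$f(n)=\prod_{p\mid n}\frac{\gcd(p-1,n-1)}{(p-1)p^{e_p-1}}$$
replaces the numerator $\gcd(p-1,n-1)$ by $p-1$ and gives $\frac{\gcd(p-1,n-1)}{(p-1)p^{e_p-1}}\le\frac{1}{p^{e_p-1}}$ for each $p$. Taking the product over all $p\mid n$ yields the asserted bound $f(n)\le\prod_{p\mid n}p^{-(e_p-1)}$; this is precisely the one-line computation displayed just before the statement.

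For the equality clause I would use that the product of these positive rational factors equals its upper bound exactly when every factor is itself an equality, i.e.\ when $\gcd(p-1,n-1)=p-1$ for every prime $p\mid n$. Since $\gcd(p-1,n-1)=p-1$ is equivalent to $p-1\mid n-1$, equality in the corollary holds if and only if $p-1\mid n-1$ for all $p\mid n$; by the formula (2.18) this is the same as $F(n)=\prod_{p\mid n}\gcd(p-1,n-1)=\prod_{p\mid n}(p-1)$. At this point I would invoke Proposition 2.72, which says that a composite $n$ satisfies $F(n)=\prod_{p\mid n}(p-1)$ precisely when $n$ is a weak Carmichael number, thereby matching the equality case with the weak Carmichael numbers. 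The same identification also follows straight from Theorem 2.4, whose hypothesis is exactly the divisibility condition $p-1\mid n-1$ for all $p\mid n$.

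I expect the only genuinely delicate point to be the scope of the equality statement rather than any computation. For a prime $n=p$ we have $e_p=1$, so the right-hand side is $1$ and $f(p)=(p-1)/(p-1)=1$, forcing equality although $p$ is not a weak Carmichael number; the same trivial equality occurs when $n$ is a power of $2$. Hence the equivalence holds verbatim only after one restricts to composite (in fact odd composite) $n$, which is exactly the setting in which Proposition 2.72 and Theorem 2.4 are stated. Isolating these prime and prime-power cases so that the cited characterizations apply without exception is the one piece of bookkeeping I would treat with care; the inequality and the factorwise equality analysis are otherwise routine.
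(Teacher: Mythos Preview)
Your argument is correct and matches the paper's approach exactly: the paper derives the inequality by the one-line factorwise bound $\gcd(p-1,n-1)\le p-1$ in the product formula (2.19), and the equality case is read off in the same way via Theorem~2.4/Proposition~2.71. One small slip: you cite ``Proposition~2.72'' for the characterization $F(n)=\prod_{p\mid n}(p-1)$, but that is Proposition~2.71 (Corollary~2.72 is the very statement you are proving). Your closing remark about primes and powers of $2$ is a legitimate observation about the \emph{statement} rather than a gap in your proof---as written, the corollary's ``if and only if'' is indeed only literally correct for odd composite $n$, which is the implicit setting throughout the paper (cf.\ Remark~2.5 and Proposition~2.3).
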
 

Of course, it can be of interest to consider the function $f(n)$ 
restricted to the set of positive integers which are not 
$CN$. For this purpose, we will need the following definition.
 \begin{definition}
Let $k\ge 2$ be a positive integer. 
An integer $n=p_1p_2\cdots p_s>1$ with odd primes $p_1,p_2,\ldots ,p_s$ and 
$s\ge 2$, is said to be an {\it almost Carmichael number of order $k$} if  
the following conditions are satisfied:

$\,\,\,(i)$ $p_j-1\mid k(n-1)$ for a fixed $j\in\{1,2,\ldots,s\}$,

$\,(ii)$ $n-1$ is  divisible by $m(p_j-1)$ for none 
$m\in\{1,\ldots,k-1\}$ and 

$(iii)$ $p_i-1\mid n-1$ for all $i\in\{1,2,\ldots,s\}$ such that $i\not= j$.
\end{definition}
\begin{remark} 
A computation shows that  there exist ``numerous" 
almost Carmi-\
chael numbers of order $2$. First notice that  a product $pq$  of  
two distinct odd primes $p$ and $q$ with $p<q$ is an  almost Carmichael number 
of order $2$ if and only $q=2p-1$. Recall that such a $p$  is a {\it Sophie 
Germain-type prime}. Namely, if both $p$ and $2p+1$ are primes, then   
$p$  is called a {\it Sophie Germain prime}, and it was conjectured that
there are infinitely many Sophie Germain primes. Notice that
this conjecture as well as the conjecture that there are infinitely 
many primes $p$ such that $2p-1$ is also a prime, are particular cases 
of a more general {\it Prime-$k$-tuples conjecture}
due to Dickson in 1904 (see e.g., \cite[p. 250]{r2}). 
Furthermore, {\tt Mathematica 8} gives 
numerous ``three-component"  almost Carmichael numbers of order $2$ in the 
set $\{n:\, n=q_iq_jq_k\,\,{\rm and}\,\,2\le i<j<k\le 1000\}$. 
  \end{remark} 

Proposition 2.71, Corollary 2.72 and the formula (2.19)
easily yield the following result.
 \begin{proposition}
The following assertions about a composite positive integer 
$n$ are true.

$\,\,\,(i)$ $f(n)\le 1$, and equality holds if and only if $n$
is a Carmichael number.

$\,(ii)$ If $n$ is not a Carmichael number, then $f(n)\le 1/2$
and equality holds if and only if  $n$  is an almost Carmichael number of 
order $2$.

$(iii)$ If $n$ is neither a Carmichael number nor an 
almost Carmichael number of order $2$, then $f(n)\le 1/3$
and equality holds if and only if  $n$  is  
an almost Carmichael number of order $3$ or
$n$ is a weak Carmichael number with the prime factorization 
 $n=3^2p_2\cdots p_s$, where $3<p_2<\cdots <p_s$ are odd  primes. 
 \end{proposition}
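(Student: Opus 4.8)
The plan is to reduce all three statements to a single structural fact: the reciprocal $1/f(n)=\varphi(n)/F(n)$ is a \emph{positive integer}. Indeed, since $\mathcal F(n)$ is a subgroup of $(\Bbb Z/n\Bbb Z)^{*}$ we have $F(n)\mid\varphi(n)$ (Remark 2.70), so by the formula (2.18) for $F(n)$ and (2.19) for $f(n)$ the quotient
\[
M(n):=\frac{\varphi(n)}{F(n)}
=\prod_{p\mid n}\frac{(p-1)p^{e_p-1}}{\gcd(p-1,n-1)}
=\prod_{p\mid n}m_p\,p^{e_p-1},
\]
where $m_p:=(p-1)/\gcd(p-1,n-1)$, is an integer $\ge 1$ with $f(n)=1/M(n)$. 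Each local factor $m_p\,p^{e_p-1}$ is a positive integer equal to $1$ precisely when $e_p=1$ and $p-1\mid n-1$. Since the classes of Carmichael, weak Carmichael and almost Carmichael numbers are all odd, I would phrase the equality characterisations for odd composite $n$ (for even composite $n$ the number is none of these, and such degenerate cases are treated separately); then each odd prime $p\mid n$ satisfies $p^{e_p-1}\ge 3$ whenever $e_p\ge 2$, and $m_p\ge 2$ whenever $p-1\nmid n-1$.

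With this in place, part $(i)$ is immediate: $M(n)\ge 1$ gives $f(n)\le 1$, and $f(n)=1\Leftrightarrow M(n)=1\Leftrightarrow F(n)=\varphi(n)$, which by Proposition 2.71 is exactly the condition that $n$ be a Carmichael number. For part $(ii)$, if $n$ is not Carmichael then $M(n)\ge 2$, so $f(n)=1/M(n)\le 1/2$ with equality iff $M(n)=2$. I would then read off the factorisation $M(n)=\prod_p m_p p^{e_p-1}=2$: since $2$ is prime and every repeated-prime factor is $\ge 3$, all exponents equal $1$ (so $n$ is squarefree) and exactly one prime $p_j$ has $m_{p_j}=2$, i.e. $\gcd(p_j-1,n-1)=(p_j-1)/2$, the remaining primes satisfying $p_i-1\mid n-1$. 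Comparison with Definition 2.73 for $k=2$ shows this is precisely the statement that $n$ is an almost Carmichael number of order $2$; conversely such a number has all local factors $1$ except one equal to $\tfrac12$, so $f(n)=\tfrac12$, giving the equivalence.

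For part $(iii)$, the hypotheses say $n$ is neither Carmichael nor almost Carmichael of order $2$, which by parts $(i)$ and $(ii)$ mean $M(n)\ne 1$ and $M(n)\ne 2$; hence $M(n)\ge 3$ and $f(n)\le 1/3$, with equality iff $M(n)=3$. Decoding $M(n)=\prod_p m_p p^{e_p-1}=3$ (again a prime) leaves exactly two possibilities for the single non-trivial factor: either $e_{p_j}=1$ and $m_{p_j}=3$ with all other primes satisfying $p_i-1\mid n-1$, which by Definition 2.73 for $k=3$ makes $n$ an almost Carmichael number of order $3$; or $p=3$ with $e_3=2$ and $m_3=1$ (automatic for odd $n$, since $3-1=2\mid n-1$) while every other prime has exponent $1$ and satisfies $p-1\mid n-1$, so that $p-1\mid n-1$ for all primes dividing $n$ and, by Theorem 2.4, $n$ is a weak Carmichael number of the shape $3^2p_2\cdots p_s$ with $3<p_2<\cdots<p_s$. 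These two cases are disjoint (the first violates the Korselt condition at $p_j$, the second satisfies it everywhere) and exhaust $M(n)=3$.

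The inequalities themselves are essentially automatic once the integrality of $M(n)$ is established; the real work, and the main obstacle, is the bookkeeping of the equality cases. Concretely, one must verify that conditions $(i)$--$(iii)$ of Definition 2.73 for order $k$ are equivalent to ``$n$ squarefree, $\gcd(p_j-1,n-1)=(p_j-1)/k$ at one prime, and $p_i-1\mid n-1$ elsewhere'', and one must cleanly separate, at $M(n)=3$, the squarefree defective-prime case (order $3$) from the non-squarefree weak Carmichael case $3^2p_2\cdots p_s$. Keeping track of the parity hypothesis is essential: it forces every repeated-prime factor $p^{e_p-1}$ to be $\ge 3$ and forces $m_3=1$, which is exactly what produces the two clean alternatives at $M(n)=3$ and excludes spurious even examples such as $n=4$ or $n=6$, where $f(n)=\tfrac12$ without $n$ being almost Carmichael of order $2$.
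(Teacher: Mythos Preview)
Your approach is correct and is essentially what the paper intends: the paper gives no detailed proof, merely stating that the proposition ``easily'' follows from Proposition~2.71, Corollary~2.72 and formula~(2.19), and your argument is precisely the clean way to unpack that claim. The one organisational point you make explicit that the paper leaves tacit is the integrality of $M(n)=1/f(n)$, which you obtain from the subgroup property $F(n)\mid\varphi(n)$ recorded in Remark~2.70; this is what turns the three inequalities into the trivial chain $M(n)\ge 1$, $\ge 2$, $\ge 3$ and reduces the equality cases to the factorisation of a small integer as $\prod_p m_p\,p^{e_p-1}$. Your observation about the spurious even cases $n=4,6$ (where $f(n)=1/2$ but $n$ is not almost Carmichael of any order) is a genuine caveat about the statement as written, not a defect of your argument.
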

 \begin{remark}
  If $n$ is a composite integer 
which is not $CN$, then by (ii) of Proposition 2.75
we have 
 \begin{equation}
F(n)\le \frac{\varphi(n)}{2}\le \frac{n-1}{2}. 
  \end{equation}
This shows that for such a $n$, at least half  of the integers 
$a$ in the interval $[1,n-1]$ are  
Fermat liars for $n$ (the so-called false witnesses for $n$ \cite{ep}). 
These facts lead to the folowing test: given a positive 
integer $n$, pick $k$ different positive integers less than $n$ and 
perform the Fermat primality test on $n$ for each of these bases; 
if $n$ is composite and it is not a Carmichael number, 
then the probability that $n$ passes all $k$ tests 
is less than $1/2^k$.
 \end{remark}
\begin{remark} 
Let $\mathcal{WCN}_3$ denote 
the set of all $WCN$ of the form $n=3^2p_2\cdots p_s$, 
where $3<p_2<\cdots <p_s$ are odd  primes.
(cf. (iii) of  Proposition 2.75).
It is easy to see $45=3^2\cdot 5$ is the only 
number in the set $\mathcal{WCN}_3$ whose prime factorization 
contains only one prime greater than $3$.  
From Table 2 we see that up to  $2\cdot 10^6$ there are still six numbers
belonging to the set $\mathcal{WCN}_3$;
these numbers are 
$13833=3^2\cdot 29\cdot 53$, 
$203841=3^2\cdot 11\cdot 29\cdot 71$, 
$321201=3^2\cdot 89\cdot 401$, 
$1080801=3^2\cdot 29\cdot 41\cdot 101$ and
$1658385=3^2\cdot 5\cdot 137\cdot 269$.
A computation shows that 
 $$
\{9m: m=pq\,\,{\rm with}\,\, p\,\, {\rm and}\,\,q \,\,{\rm primes}\,\,
\,\,{\rm such \,\, that}\,\, 3<p<q<10^5\}\cap 
\mathcal{WCN}_3=\Phi.
  $$
Furthermore, if $q_k$ denotes the $k$th prime, then  the set 
$\{9m:\, m=q_iq_jq_k\,\,{\rm and}\,\,2\le i<j<k\le 1000\}$
contains the following five numbers of the set $\mathcal{WCN}_3$ 
which  are greater than $2\cdot 10^6$: 
$8074881= 3^2\cdot 17\cdot 89\cdot 593$,
$19678401=3^2\cdot 17\cdot 41\cdot 3137$, 
$95682861=3^2\cdot 29\cdot 53\cdot 6917$,
$359512011=3^2\cdot 23\cdot 467\cdot 3719$ and
$1955610801=3^2\cdot 53\cdot 1433\cdot 2861$.
 \end{remark}

 \begin{remark}
If $n$ is a $CN$, then as noticed above $F(n)=\varphi(n)$. 
Erd\H{o}s and Pomerance  \cite[Section 6]{ep} conjectured 
that not only  are there infinitely many $CN$, but that 
  $$
\limsup_{n\,\,{\rm composite}}\frac{F(n)}{n}=1.
 $$
Notice that  by \cite[the estimate (2.8)]{ep}, we have  
that $F(n)/n^{1-\varepsilon}$ is unbounded on the composites for any
$\varepsilon >0$. On the other hand, by \cite[Theorem 6.1]{ep},
   $$
\limsup_{n\,\,{\rm composite}}\frac{F(n)\log^2 n}{n}>0.
 $$
A computation  (cf. Remark 2.77) suggests the following conjecture.
  \end{remark}
  \begin{conjecture} 
Let ${\mathcal WCN}_3$ denote the set of all weak Carmichael numbers
described in Remark $2.77$. Then 
    $$
\limsup_{n\in {\mathcal WCN}_3}\frac{F(n)}{n}=\frac{1}{3}.
 $$
 \end{conjecture}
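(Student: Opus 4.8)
The plan is to convert the $\limsup$ into an explicit product and then isolate the one genuinely hard ingredient, which is an existence statement. Every $n\in{\mathcal WCN}_3$ is by definition a weak Carmichael number with factorization $n=3^2p_2\cdots p_s$, so Proposition 2.71 applies and gives $F(n)=\prod_{p\mid n}(p-1)=2\prod_{i=2}^{s}(p_i-1)$. Since $n=9\prod_{i=2}^{s}p_i$, this produces the closed form
\begin{equation*}
\frac{F(n)}{n}=\frac{2}{9}\prod_{i=2}^{s}\frac{p_i-1}{p_i}
=\frac{2}{9}\prod_{i=2}^{s}\Bigl(1-\frac{1}{p_i}\Bigr),
\end{equation*}
valid for every $n\in{\mathcal WCN}_3$. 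Each factor is strictly less than $1$, so $F(n)/n<2/9$ for all such $n$, and the whole problem reduces to controlling how close $\prod_{i\ge 2}(1-1/p_i)$ can be pushed to $1$.

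Concretely, the reduction says
\begin{equation*}
\limsup_{n\in{\mathcal WCN}_3}\frac{F(n)}{n}
=\frac{2}{9}\,\limsup_{n\in{\mathcal WCN}_3}\prod_{i=2}^{s}\Bigl(1-\frac{1}{p_i}\Bigr),
\end{equation*}
so the task is to produce, for each $\varepsilon>0$, a number $n\in{\mathcal WCN}_3$ all of whose prime factors exceeding $3$ are larger than some bound $N(\varepsilon)$, equivalently with $\sum_{i\ge 2}1/p_i<\varepsilon$; for such $n$ the product lies within $\varepsilon$ of $1$. Note that the one-extra-prime candidates $n=9p$ are useless here: by Theorem 2.4 one needs $p-1\mid 9p-1$, hence $p-1\mid 8$, so $45$ is the only such number (as already recorded in Remark 2.77). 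Thus any admissible construction must use at least two primes besides $3$.

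The main obstacle is precisely this construction step, and I expect it to be as hard as the analogous statement for genuine Carmichael numbers. By Theorem 2.4 one must exhibit large primes $3<p_2<\cdots<p_s$ with $p_i-1\mid 9p_2\cdots p_s-1$ for every $i$, and forcing the least prime $p_2$ to be arbitrarily large places this in the same circle of ideas as building Carmichael numbers with large least prime factor, i.e.\ the combinatorial/covering machinery of Alford--Granville--Pomerance \cite{agp} (fix a highly composite modulus $L$, harvest a large set of primes $p$ with $p-1\mid L$, and solve the resulting subset-product congruence modulo the product). Adapting that method to carry the fixed factor $3^2$ while keeping all remaining primes large is the crux; the elementary reductions above are routine by comparison.

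Once such a family is in place, carrying the limit through gives $\limsup_{n\in{\mathcal WCN}_3}F(n)/n=2/9$. I would flag, however, that this computation yields $2/9$ rather than $1/3$: the constant $1/3$ is exactly $\limsup_{n\in{\mathcal WCN}_3}F(n)/\varphi(n)$, which is trivially $1/3$ because $f(n)\equiv 1/3$ on ${\mathcal WCN}_3$ by part (iii) of Proposition 2.75. This strongly suggests the statement should be read with $\varphi(n)$ in the denominator, or else with the asserted constant corrected to $2/9$, since the displayed identity for $F(n)/n$ forces $\limsup F(n)/n\le 2/9<1/3$ unconditionally.
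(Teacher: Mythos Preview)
The statement you were given is labelled \emph{Conjecture} 2.79 in the paper; there is no proof in the paper to compare against. The author proposes it by analogy with the Erd\H{o}s--Pomerance conjecture $\limsup_{n\text{ composite}}F(n)/n=1$ recalled in Remark 2.78, and offers only the computational evidence of Remark 2.77.

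Your analysis is correct and in fact shows that the conjecture, as literally stated, is false. For every $n=3^2p_2\cdots p_s\in\mathcal{WCN}_3$ Proposition 2.71 gives $F(n)=2\prod_{i\ge 2}(p_i-1)$, hence
\[
\frac{F(n)}{n}=\frac{2}{9}\prod_{i\ge 2}\Bigl(1-\frac{1}{p_i}\Bigr)<\frac{2}{9}<\frac{1}{3}
\]
unconditionally. Your proposed emendations are the right ones: the constant $1/3$ is exactly $F(n)/\varphi(n)$ on $\mathcal{WCN}_3$ (this is the content of Proposition 2.75(iii)), while with $n$ in the denominator the natural target is $2/9$. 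The author appears to have conflated $n$ with $\varphi(n)$, or equivalently to have overlooked the fixed factor $\varphi(9)/9=2/3$.

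Your reduction of the corrected statement $\limsup F(n)/n=2/9$ to the construction of elements of $\mathcal{WCN}_3$ whose non-$3$ prime factors are all arbitrarily large is also correct, as is your assessment that this is an AGP-type problem: one needs square-free $m=p_2\cdots p_s$ with $p_i-1\mid 9m-1$ for every $i$ and $\min_i p_i\to\infty$. This is not settled anywhere in the paper and is genuinely open, on a par with the Erd\H{o}s--Pomerance conjecture it is modelled on.
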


 \section{Proofs of Propositions 2.3, 2.60, 2.63 and Corollary 2.20}

 \begin{proof}[Proof of Proposition $2.3$] 
Clearly, $\varphi(n)$ is  even for each $n\ge 3$, and the set $R_n$ 
can be presented as  
  \begin{equation}
R_n=\{r_1,r_2,\ldots,r_{\varphi(n)/2},n-r_1,
n-r_2,\ldots,n-r_{\varphi(n)/2}\}.
 \end{equation}
Using (3.1)   we find that
  \begin{equation}
 \sum_{\gcd(k,n)=1\atop 1\le k\le n-1}k^{n-1}
\equiv \sum_{i=1}^{\varphi(n)}r_i^{n-1}
=\sum_{i=1}^{\varphi(n)/2}(r_i^{n-1}+(n-r_i)^{n-1}).
   \end{equation}
 If $n$ is odd, the right hand side of (3.2) is
  $$
\equiv\sum_{i=1}^{\varphi(n)/2}(r_i^{n-1}+(-r_i)^{n-1})\pmod{n}=
0\pmod{n}.
 $$ 
Hence, every weak Carmichael number must be odd. Finally, if $n$ is odd,
using  (3.2), we have
 \begin{equation}
 \sum_{\gcd(k,n)=1\atop 1\le k\le n-1}k^{n-1}
= \sum_{i=1}^{\varphi(n)}r_i^{n-1}
=\sum_{i=1}^{\varphi(n)/2}(r_i^{n-1}+(n-r_i)^{n-1})
 \equiv 2\sum_{i=1}^{\varphi(n)/2}r_i^{n-1}\pmod{n}.
   \end{equation}
This together with Definition 2.1 concludes the proof.  
  \end{proof}
 \begin{proof}[Proof of Proposition $2.60$]
Applying the binomial formula and using the assumption that $n$ is an odd
composite integer, we find that
  \begin{equation*}\begin{split}
\sum_{\gcd(k,n)\atop 1\le k\le n-1}k^{n-1}
&=\sum_{i=1}^{\varphi(n)}r_i^{n-1}
=\sum_{i=1}^{\varphi(n)/2}(r_i^{n-1}+(n-r_i)^{n-1})\\
& \equiv \sum_{i=1}^{\varphi(n)/2}r_i^{n-1}-
{n-1\choose 1}n\sum_{i=1}^{\varphi(n)/2}r_i^{n-2}
+\sum_{i=1}^{\varphi(n)/2}r_i^{n-1}\pmod{n^2}\\
&=2\sum_{i=1}^{\varphi(n)/2}r_i^{n-1}+n\sum_{i=1}^{\varphi(n)/2}r_i^{n-2}
\equiv \varphi(n)\pmod{n^2}.
 \end{split}
\end{equation*}
This completes the proof.
  \end{proof}
 \begin{proof}[Proof of Corollary $2.20$]
Given product $n=pq$ with primes $q<p$, using Fermat little theorem, 
the sum  on the left right hand side of (2.8) in 
Definition 2.2 is 
      \begin{equation}\begin{split}
S(p,q):&=\sum_{\gcd(k,pq)=1\atop 1\le k\le pq-1}k^{pq-1}=
\sum_{j=0}^{q-1}\sum_{i=jp+1}^{(j+1)p-1}i^{pq-1}-
\sum_{s\equiv 0(\bmod{\,q})\atop 1\le s\le p-1}s^{pq-1}\\
&=\sum_{j=0}^{q-1}\sum_{i=jp+1}^{(j+1)p-1}i^{pq-1}-
q^{pq-1}\sum_{i=1}^{p-1}i^{pq-1}\\
& \equiv q\sum_{i=1}^{p-1}i^{pq-1}-q^{pq-1}\sum_{i=1}^{p-1}i^{pq-1}
\pmod{p}\\
&=q\sum_{i=1}^{p-1}i^{(p-1)q+(q-1)}-q^{(p-1)q+(q-1)}\sum_{i=1}^{p-1}i^{(p-1)q+(q-1)}\\
&\equiv q\sum_{i=1}^{p-1}i^{q-1}-q^{q-1}\sum_{i=1}^{p-1}i^{q-1}\pmod{p}\\
&=q(1-q^{q-2})\sum_{i=1}^{p-1}i^{q-1}\pmod{p}.
         \end{split}\end{equation}
Let $a$ be a generator of the multiplicative unit group of 
$\Bbb Z/p\Bbb Z$. Then since $q-1<p-1$ it follows easily that the set 
$\{1^{q-1},2^{q-1},\ldots, (p-1)^{q-1}\}$ regarding modulo $p$ 
conicides with the set $\Bbb Z_p^{*}=\{1,2,\ldots,p-1\}$ 
of all nonzero residues modulo $p$. This shows that 
     \begin{equation}
\sum_{i=1}^{p-1}i^{q-1}\equiv \sum_{i=1}^{p-1}i\pmod{p}
=\frac{(p-1)p}{2}\equiv 0\pmod{p}.
       \end{equation}
Taking (2.16) into (2.15), we immediately get 
    \begin{equation}
S(p,q)\equiv 0\pmod{p}.
   \end{equation}
      \begin{equation}
S(p,q):=\sum_{\gcd(k,pq)=1\atop 1\le k\le pq-1}k^{pq-1}\equiv 0\pmod{p}.
        \end{equation}
Since $\varphi(n)=(p-1)(q-1)\equiv 1-q(\bmod{\,p})$ from (2.18) 
we have 
  \begin{equation}
\sum_{\gcd(k,pq)=1\atop 1\le k\le pq-1}k^{pq-1}-\varphi(pq)\equiv q-1\pmod{p},
      \end{equation}
In view of the fact that $q<p$, we conclude that the expression on 
the left hand side of   (2.19) is not divisible by $p$. Theefore,
$n=pq$ is not a Carmichael number, and the proof is completed.
 \end{proof}

\begin{proof}[Proof of Proposition $2.63$] 
Using Euler totient theorem, we have
   \begin{equation}    
\sum_{1\le k\le p^f-1\atop (k,p)=1}k^{p^f-1}\equiv
\sum_{1\le k\le p^f-1\atop (k,p)=1}
\frac{1}{k^{p^{2f}-p^{2f-1}-p^f+1}}\pmod{p^{2f}}.
  \end{equation}
By the  congruence  (6) in \cite{sl1}, 
it follows that if $s$ in an even positive integer 
and $t=(\varphi(p^{2f})-1)s$ then 
   \begin{equation}    
\sum_{1\le k\le p^f-1\atop (k,p)=1}\frac{1}{k^{p^{2f}-p^{2f-1}-p^f+1}}\equiv
p^fB_t\pmod{p^{2f}}
 \end{equation}
where 
   \begin{equation}    
t=(\varphi(p^{2f})-1)(p^{2f}-p^{2f-1}-p^f+1)=
(p^{2f}-p^{2f-1}-1)(p^{2f}-p^{2f-1}-p^f+1)
   \end{equation}
Comparing (3.10) and (3.11) gives
 \begin{equation}    
\sum_{1\le k\le p^f-1\atop (k,p)=1}k^{p^f-1}\equiv p^fB_t\pmod{p^{2f}}
    \end{equation}
 with $t$ given by (3.12).
Notice that by von Staudt-Clausen's theorem,
the denominator $D_t$  of  Bernoulli number $B_t=N_t/D_t$ 
is the product of all primes $p$ such that $p-1$ divides $t$.
In particular, this shows that $p\parallel D_t$. 
From this and the congruence (3.13) we conclude that 
$\sum_{1\le k\le p^f-1\atop (k,p)=1}k^{p^f-1}$ is divisible by
$p^{2f}$ if and only if $N_t$ is divisible by $p^{f+1}$.
  \end{proof}

\section{Proof of Theorem 2.4 and Corollary 2.17}
Proof of Theorem 2.4 is based on the following three auxiliary results.
 \begin{lemma} 
Let $p^e$ be a  power of an odd prime $p$,  and let $me$ be a positive integer 
such that $m$ is not divisible by $p-1$. Then
   \begin{equation}
\sum_{1\le k\le p^e-1\atop \gcd(k,p)=1}k^m\equiv 0\pmod{p^e}.
   \end{equation}
 \end{lemma}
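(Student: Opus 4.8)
The plan is to exploit the multiplicative (permutation) symmetry of the set of reduced residues modulo $p^e$, so that the divisibility claim collapses to the statement that a single well-chosen factor is invertible. Write $U$ for the group $(\Bbb Z/p^e\Bbb Z)^{*}$ of reduced residues modulo $p^e$, and set $S=\sum_{k\in U}k^m$, the sum in question: the condition $1\le k\le p^e-1$ with $\gcd(k,p)=1$ picks out exactly one representative of each class in $U$, so $S\pmod{p^e}$ is well defined.

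First I would observe that for any fixed unit $c\in U$ the map $k\mapsto ck$ permutes $U$, whence, reducing modulo $p^e$,
$$ S=\sum_{k\in U}k^m\equiv\sum_{k\in U}(ck)^m=c^m\sum_{k\in U}k^m=c^m S\pmod{p^e}. $$
This gives $(c^m-1)S\equiv 0\pmod{p^e}$ for every $c\in U$, so it remains only to choose $c$ cleverly.

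The crux is to produce one unit $c$ for which $c^m-1$ is itself a unit modulo $p^e$, i.e.\ for which $c^m\not\equiv 1\pmod p$. Here the hypothesis $p-1\nmid m$ enters, together with the existence of a primitive root modulo $p$: taking $c$ to be any integer coprime to $p$ whose residue modulo $p$ has order exactly $p-1$, the congruence $c^m\equiv 1\pmod p$ would force $p-1\mid m$, contrary to assumption. Hence $c^m-1$ is coprime to $p$, therefore invertible modulo $p^e$, and multiplying $(c^m-1)S\equiv 0\pmod{p^e}$ by its inverse yields $S\equiv 0\pmod{p^e}$, as claimed.

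The only genuine obstacle is this selection of $c$, and it is precisely where the hypothesis $p-1\nmid m$ is indispensable; the rest is the routine permutation bookkeeping above. As an alternative I could instead fix a primitive root $g$ modulo $p^e$, write $S$ as a geometric sum $\sum_{j=0}^{\varphi(p^e)-1}(g^m)^j$, and use $(g^m-1)S\equiv (g^m)^{\varphi(p^e)}-1\equiv 0\pmod{p^e}$; but this still reduces to checking that $g^m-1$ is a unit, so the symmetry argument is the cleaner route.
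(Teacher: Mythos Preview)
Your proof is correct and follows essentially the same approach as the paper: both arguments use that multiplication by a fixed unit permutes the reduced residues modulo $p^e$ to obtain $(c^m-1)S\equiv 0\pmod{p^e}$, and then take $c$ to be a primitive root so that $c^m-1$ is a unit. The only cosmetic difference is that the paper chooses a primitive root modulo $p^e$ while you work with a primitive root modulo $p$; your version is in fact slightly more explicit about why $c^m-1$ is invertible modulo $p^e$ (namely because $c^m\not\equiv 1\pmod p$), a point the paper passes over quickly.
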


\begin{proof}
Let $a$ be a 
primitive root modulo $p^e$.
Then $a^m$ is not divisible by $p$. Moreover, it is easy to see 
that the set $\{aj:\,1\le j\le p^e-1\quad {\rm and}\quad 
\gcd(j,p)=1\}$ reduced modulo $p^e$ coincides 
with the set of all residues modulo $p^e$ which are relatively 
prime to $p$.  This shows that 
   \begin{equation*}
a^m\sum_{1\le k\le p^s-1\atop \gcd(k,p)=1}k^m=
\sum_{1\le k\le p^e-1\atop \gcd(k,p)=1}(ak)^m
\equiv  \sum_{1\le k\le p^e-1\atop \gcd(k,p)=1}k^m\pmod{p^e},
   \end{equation*}
whence it follows that 
   $$
(a^m-1)\sum_{1\le k\le p^e-1\atop \gcd(k,p)=1}k^m\equiv 0\pmod{p^e}.
  $$
This together with the assumption that $a^m\not\equiv 1(\bmod{\,p^e})$
immediately yields the desired congruence (4.1).
 \end{proof}

\begin{lemma} Let $n>1$ be a positive integer 
with a prime factorization  $n=p_1^{e_1}p_2^{e_2}\cdots p_s^{e_s}$ 
 where $s\ge 2$. Let $R$ be a set of positive integers less than $n$ and 
relatively prime to $n$. For any fixed $i\in \{1,2,\ldots,s\}$ set 
 $$
R(p_i^{e_i})=\{a\in \Bbb N: 1\le a\le p_i^{e_i}\quad  and \quad \gcd(a,p_i)=1\}.
  $$ 
  For all pairs  $(i,j)$ with  $i\in \{1,2,\ldots,s\}$ and  
$j\in R(p_i^{e_i})$ define the set $A_{ij}$ as
  $$
A_{ij}=\{a\in R:\, a\equiv j(\bmod{\,p_i^{e_i}})\}.  
  $$
Then for any $i\in \{1,2,\ldots,s\}$
    \begin{equation}
|A_{ij}|=\varphi\left(\frac{n}{p_i^{e_i}}\right)=
\prod_{1\le l\le s\atop l\not= i}(p_l^{e_l}-p_l^{e_l-1})
\quad  for\,\, all\quad
j\in R(p_i^{e_i}),
  \end{equation}
where $|S|$ denotes the cardinality of a finite set $S$.
 \end{lemma}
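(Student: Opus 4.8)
The plan is to read $R$ as a complete set of reduced residues modulo $n$, so that $|R|=\varphi(n)$ and $R$ contains exactly one representative of each residue class coprime to $n$, and then to split each such class according to its reduction modulo $p_i^{e_i}$ by means of the Chinese remainder theorem. First I would fix the index $i$ and write $m=n/p_i^{e_i}=\prod_{1\le l\le s\atop l\neq i}p_l^{e_l}$, observing that $\gcd(p_i^{e_i},m)=1$ because the $p_l$ are distinct primes. The Chinese remainder theorem then furnishes a ring isomorphism $\Bbb Z/n\Bbb Z\cong \Bbb Z/p_i^{e_i}\Bbb Z\times\Bbb Z/m\Bbb Z$ under which $a\mapsto(a\bmod p_i^{e_i},\,a\bmod m)$, and this restricts to a bijection of the corresponding unit groups.

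The central step is to produce a bijection from $A_{ij}$ onto the set of residues modulo $m$ that are coprime to $m$. Concretely, for $a\in A_{ij}$ I would send $a$ to $a\bmod m$. Since $\gcd(a,n)=1$ forces both $\gcd(a,p_i)=1$ and $\gcd(a,m)=1$, this image is a unit modulo $m$, so the map is well defined; it is injective because $a\equiv a'\pmod{m}$ together with $a\equiv a'\equiv j\pmod{p_i^{e_i}}$ gives $a\equiv a'\pmod{n}$, whence $a=a'$ for representatives in $[1,n-1]$. Conversely, given any $b$ coprime to $m$, the pair $(j,b)$, with $j\in R(p_i^{e_i})$ hence coprime to $p_i$, determines by the Chinese remainder theorem a unique residue class modulo $n$; this class is coprime to $n$ because it is coprime to each prime power $p_l^{e_l}$, so its representative in $R$ lies in $A_{ij}$ and maps back to $b$. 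Thus the map is a bijection and $|A_{ij}|=\varphi(m)$, the value being manifestly independent of the chosen $j$.

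Finally I would identify $\varphi(m)$ with the stated product. By the multiplicativity of Euler's totient over the pairwise coprime factors $p_l^{e_l}$ with $l\neq i$,
$$
\varphi\!\left(\frac{n}{p_i^{e_i}}\right)=\varphi(m)=\prod_{1\le l\le s\atop l\neq i}\varphi(p_l^{e_l})=\prod_{1\le l\le s\atop l\neq i}(p_l^{e_l}-p_l^{e_l-1}),
$$
which is exactly the right-hand side of the claimed equality, completing the argument.

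I do not expect a genuine obstacle here: once the Chinese remainder theorem is invoked the computation is routine. The only point demanding care is the bookkeeping about representatives, since elements of $R$ are actual integers in $[1,n-1]$ rather than residue classes. I must therefore make sure that passing between $A_{ij}$ and the unit group of $\Bbb Z/m\Bbb Z$ respects the one-representative-per-class convention and that the resulting count is truly independent of $j$; the bijection makes this transparent, because the free parameter $b$ ranges over all $\varphi(m)$ units of $\Bbb Z/m\Bbb Z$ regardless of which $j$ is fixed.
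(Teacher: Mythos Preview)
Your proof is correct and follows essentially the same idea as the paper: both arguments amount to showing that the residues in $R$ lying in the class $j$ modulo $p_i^{e_i}$ biject with the units modulo $m=n/p_i^{e_i}$, yielding the count $\varphi(m)$. The only difference is presentational---you invoke the Chinese remainder theorem and the induced isomorphism of unit groups, whereas the paper writes out the arithmetic progression $T_j=\{j+rp_i^{e_i}:0\le r\le m-1\}$ explicitly and checks by hand that it is a complete residue system modulo $m$.
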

\begin{proof} 
Clearly, it  suffices to show that (4.2)  is satisfied for  
$i=1$. Then for any fixed 
$j\in R(p_1^{e_1})$ consider the set
  $$
T_j=\{j+rp_1^{e_1}:\, r=0,1,\ldots ,\frac{n}{p_1^{e_1}}-1
(=p_2^{e_2}\cdots p_s^{e_s}-1)\}.
  $$ 
Since for each $t\in T_j$,  
  $
j\le t\le j+p_1^{e_1}\left(\frac{n}{p_1^{e_1}}-1\right)\le n-1,
  $
it follows that the set  $A_{1j}$   actually consists of these  
elements in $T_j$ that are relatively prime to $n/p_1^{e_1}$. Notice that
the set $T_j$ reduced modulo $n/p_1^{e_1}$ coincides with the set 
$\{0,1,2,\ldots,n/p_1^{e_1}-1\}$ of all residues modulo $n/p_1^{e_1}$;
namely, if  $j+r_1p_1^{e_1}\equiv j+r_2p_1^{e_1}(\bmod{\,n/p_1^{e_1}})$ with 
$0\le r_1<r_2\le n/p_1^{e_1}-1$, then $n/p_1^{e_1}\mid (r_2-r_1)p_1^{e_1}$, 
and so, $n/p_1^{e_1}\mid (r_2-r_1)$ which is impossible because of 
$1\le r_2-r_1\le n/p_1^{e_1}-1$. This shows that
the set $A_{1j}$ contains exactly $\varphi(n/p_1^{e_1})$ elements, which is 
equal to $(p_2^{e_2}-p_2^{e_2-1})\cdots (p_s^{e_s}-p_s^{e_s-1})$. This 
completes the proof.
 \end{proof}
 \begin{lemma}
Let $n$ and $e$ be positive integers and let $p$ be a prime such that
  $p^e\mid n$ and $p-1\mid n-1$. Then 
   \begin{equation}
k^{n-1}\equiv k^{p^{e-1}-1}\pmod{p^e}
  \end{equation}
for every integer $k$ that is not divisible by $p$.  
\end{lemma}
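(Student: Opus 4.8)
The plan is to recast the asserted congruence as a single divisibility condition on exponents and then close the argument with Euler's theorem. First I would note that, since $\gcd(k,p)=1$, the factor $k^{p^{e-1}-1}$ is invertible modulo $p^e$, so the claim $k^{n-1}\equiv k^{p^{e-1}-1}\pmod{p^e}$ is equivalent to $k^{\,n-p^{e-1}}\equiv 1\pmod{p^e}$. As this has to hold for every $k$ not divisible by $p$, it suffices to show that the exponent $n-p^{e-1}$ is divisible by $\varphi(p^e)=p^{e-1}(p-1)$; once this is known, Euler totient theorem yields the congruence for all admissible $k$ simultaneously. This reduction works uniformly in $p$ (the case $p=2$ included), so no separate treatment of even $p$ is needed, and the edge case $e=1$ degenerates correctly to Fermat's little theorem.

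The core of the proof is therefore the divisibility $p^{e-1}(p-1)\mid n-p^{e-1}$. Writing $n=p^e m$ (legitimate since $p^e\mid n$), I would factor $n-p^{e-1}=p^{e-1}(pm-1)$, so the $p^{e-1}$ cancels and the task reduces to showing $(p-1)\mid(pm-1)$. Here I use $p\equiv 1\pmod{p-1}$, which gives $pm-1\equiv m-1\pmod{p-1}$, reducing the goal further to $(p-1)\mid(m-1)$. Finally, the same observation applied to the hypothesis $(p-1)\mid(n-1)$ does the job: since $p^e\equiv 1\pmod{p-1}$ we have $n-1=p^e m-1\equiv m-1\pmod{p-1}$, so $(p-1)\mid(n-1)$ is in fact equivalent to $(p-1)\mid(m-1)$, and the hypothesis delivers exactly what is needed.

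I do not expect a genuine obstacle here: the entire argument is a short chain of congruences modulo $p-1$ together with one application of Euler's theorem. The only point that requires mild care is keeping track of the factor $p^{e-1}$ when passing from $k^{n-1}\equiv k^{p^{e-1}-1}$ to the exponent divisibility, and recognizing that the two hypotheses $p^e\mid n$ and $(p-1)\mid(n-1)$ conspire precisely to make $\varphi(p^e)$ divide $n-p^{e-1}$.
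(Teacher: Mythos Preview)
Your proof is correct and follows essentially the same approach as the paper: both arguments write $n=p^e m$, use $p\equiv 1\pmod{p-1}$ to deduce $m\equiv 1\pmod{p-1}$ from the hypothesis $p-1\mid n-1$, conclude that $n-1\equiv p^{e-1}-1\pmod{p^{e-1}(p-1)}$, and finish with Euler's theorem. The only cosmetic difference is that you phrase the key step as $\varphi(p^e)\mid n-p^{e-1}$ while the paper phrases it as a congruence between $n-1$ and $p^{e-1}-1$ modulo $\varphi(p^e)$.
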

   \begin{proof}
Take $n=p^en'$ with an integer $n'$.
Then from the assumption    $p-1\mid n-1$ it follows that 
$n'\equiv 1 (\bmod{\,(p-1)})$, and therefore
 $$
n-1=p^en'-1\equiv p^{e-1}-1\pmod{p^{e-1}(p-1)}.
  $$
If $k$ is not divisible by $p$, then from the above congruence, the fact that 
$\varphi(p^e)=p^{e-1}(p-1)$ and Euler  totient theorem, we have 
$k^{p^{e-1}-1}\equiv k^{n-1}(\bmod{\, p^{e}})$ which immediately implies  
(4.3).
   \end{proof}   
  \begin{lemma} Let $n>1$ be a positive integer 
with a prime factorization  $n=p_1^{e_1}p_2^{e_2}\cdots p_s^{e_s}$. 
For any fixed $i\in \{1,2,\ldots,s\}$, set 
 $$
R(p_i^{e_i})=\{a\in \Bbb N: 1\le a\le p_i^{e_i}\quad and \quad \gcd(a,p_i)=1\}.
  $$ 
Then for every $i\in\{1,2,\ldots,s\}$ there holds
    \begin{equation}
\sum_{\gcd(k,n)=1\atop 1\le k\le n-1}k^{n-1}
\equiv \varphi\left(\frac{n}{p_i^{e_i}}\right)\sum_{k\in R(p_i^{e_i})}k^{n-1}
\pmod{p_i^{e_i}}.
  \end{equation}
If in addition, $p_i-1\mid n-1$ for some $i\in \{1,2,\ldots,s\}$, then
      \begin{equation}
\sum_{\gcd(k,n)=1\atop 1\le k\le n-1}k^{n-1}
\equiv \varphi\left(\frac{n}{p_i^{e_i}}\right)\sum_{k\in R(p_i^{e_i})}
k^{p_i^{e_i-1}-1}
\pmod{p_i^{e_i}}.
  \end{equation}
\end{lemma}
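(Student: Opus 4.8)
The plan is to evaluate the left-hand power sum one prime power at a time: I would group the totatives of $n$ according to their residue classes modulo $p_i^{e_i}$, collapse each class to a single representative, and then feed in the counting from Lemma 4.2 and the exponent reduction from Lemma 4.3.

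First I would note that the sum $\sum_{\gcd(k,n)=1,\,1\le k\le n-1}k^{n-1}$ is exactly $\sum_{k\in R}k^{n-1}$, where $R$ is the set of positive integers below $n$ coprime to $n$. Since every element of $R$ is coprime to $p_i$, its reduction modulo $p_i^{e_i}$ lands in the reduced residue system $R(p_i^{e_i})$, and conversely each $j\in R(p_i^{e_i})$ is hit; hence the sets $A_{ij}=\{a\in R:\,a\equiv j\pmod{p_i^{e_i}}\}$ partition $R$ as $j$ ranges over $R(p_i^{e_i})$. The elementary fact driving the argument is that $k\equiv j\pmod{p_i^{e_i}}$ forces $k^{n-1}\equiv j^{n-1}\pmod{p_i^{e_i}}$, so every term indexed by a fixed class $j$ contributes the same value modulo $p_i^{e_i}$. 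Writing
\begin{equation*}
\sum_{k\in R}k^{n-1}=\sum_{j\in R(p_i^{e_i})}\sum_{k\in A_{ij}}k^{n-1}
\equiv\sum_{j\in R(p_i^{e_i})}|A_{ij}|\,j^{n-1}\pmod{p_i^{e_i}},
\end{equation*}
I would then apply Lemma 4.2, which gives $|A_{ij}|=\varphi(n/p_i^{e_i})$ \emph{independently of} $j$. Factoring this common cardinality out and relabelling the dummy index $j$ as $k$ yields (4.4) at once.

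For (4.5) I would simply invoke the extra hypothesis $p_i-1\mid n-1$ and apply Lemma 4.3 with $p=p_i$, $e=e_i$: every $k\in R(p_i^{e_i})$ is coprime to $p_i$, so $k^{n-1}\equiv k^{\,p_i^{e_i-1}-1}\pmod{p_i^{e_i}}$. Substituting this termwise into the sum on the right of (4.4) converts $\sum_{k\in R(p_i^{e_i})}k^{n-1}$ into $\sum_{k\in R(p_i^{e_i})}k^{\,p_i^{e_i-1}-1}$ modulo $p_i^{e_i}$, which is precisely (4.5).

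I do not expect a genuine obstacle here; the lemma is a clean bookkeeping reduction, and the two substantive inputs (uniformity of the fibre sizes and the exponent collapse) are exactly what Lemmas 4.2 and 4.3 already supply. The only points demanding care are confirming that the $A_{ij}$ genuinely partition $R$ — i.e.\ that $R(p_i^{e_i})$ is a full reduced residue system modulo $p_i^{e_i}$ and that coprimality to $n$ guarantees coprimality to $p_i$ — and checking that the congruence $k^{n-1}\equiv j^{n-1}$ is valid modulo $p_i^{e_i}$ rather than only modulo $p_i$. Both are immediate, so the argument is essentially a two-line assembly once the partition is set up.
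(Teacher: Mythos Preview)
Your proposal is correct and follows essentially the same route as the paper: partition $R$ into the fibres $A_{ij}$ over $j\in R(p_i^{e_i})$, replace each $k^{n-1}$ by $j^{n-1}$ modulo $p_i^{e_i}$, apply Lemma~4.2 to get the uniform fibre size $\varphi(n/p_i^{e_i})$, and then invoke Lemma~4.3 for the exponent reduction when $p_i-1\mid n-1$. If anything, your write-up is cleaner than the paper's, which contains a stray reference to ``(4.1) of Lemma~4.1'' that plays no role in the computation.
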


  \begin{proof} 
Consider the set $R$ of all reduced residues modulo $n$ that are relatively prime to $n$, 
i.e.,
 $$
R=\{k:\,1\le k\le n-1\,\, {\rm and}\,\,\gcd(k,n)=1\}.
  $$
Let $i\in \{1,2,\ldots,m\}$ be any fixed. 
For each $j\in R(p_i^{e_i})$   take 
  $$
A_{ij}=\{a\in R:\, a\equiv j(\bmod{\,p_i^{e_i}})\}.
  $$
Then by Lemma 4.2, 
  \begin{equation}
|A_{ij}|=\varphi\left(\frac{n}{p_i^{e_i}}\right)\quad {\rm for\,\, all}\quad
j\in R(p_i^{e_i}).
  \end{equation}
Furthermore, using (4.3) and (4.1) of Lemma 4.1 with $s=n-1$, we have
   \begin{equation*}\begin{split}
 \sum_{\gcd(k,n)=1\atop 1\le k\le n-1}k^{n-1}
&=\sum_{k\in R}k^{n-1}=\sum_{j\in R(p_i^{e_i})}\sum_{k\in A_{ij}}k^{n-1}\\
&\equiv \varphi\left(\frac{n}{p_i^{e_i}}\right)\sum_{k\in R(p_i^{e_i})}k^{n-1}
\pmod{p_i^{e_i}}.
     \end{split}\end{equation*}
The above congruence implies (4.4). Finally, if $p_i-1\mid n-1$ then 
substituting (4.3) of Lemma 4.3 with $p_i^{e_i}$ instead of $p^e$ 
into (4.4) we immediately obtain (4.5).
    \end{proof}
 \begin{lemma} Let $n$ be a composite positive integer with the prime 
factorization $n=p_1^{e_1}\cdots p_s^{e_s}$. 
For any fixed $i\in \{1,2,\ldots,s\}$ take 
 $$
R(p_i^{e_i})=\{a\in \Bbb N: 1\le a\le p_i^{e_i}\quad  and\quad 
\gcd(a, p_i)=1\}.
  $$ 
Then $n$ is a weak Carmichael number if and only if for every 
$i\in\{1,2,\ldots ,s\}$ there holds
   \begin{equation}
\left(\sum_{k\in R(p_i^{e_i})}k^{n-1}+p_i^{e_i-1}\right)\varphi
\left(\frac{n}{p_i^{e_i}}\right)\equiv 0\pmod{p_i^{e_i}}.
  \end{equation}
 \end{lemma}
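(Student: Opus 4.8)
The plan is to reduce the single congruence modulo $n$ that defines a weak Carmichael number to a system of congruences, one modulo each prime power $p_i^{e_i}$, and then rewrite each of them using the unconditional restriction congruence (4.4) of Lemma 4.4. By Definition 2.1, $n$ is a weak Carmichael number precisely when $\sum_{\gcd(k,n)=1,\,1\le k\le n-1}k^{n-1}\equiv\varphi(n)\pmod{n}$. Since $n=p_1^{e_1}\cdots p_s^{e_s}$ with the $p_i$ distinct, the Chinese remainder theorem makes this congruence modulo $n$ equivalent to the system
$$
\sum_{\gcd(k,n)=1\atop 1\le k\le n-1}k^{n-1}\equiv\varphi(n)\pmod{p_i^{e_i}},\qquad i=1,2,\ldots,s.
$$
Every step below is a biconditional, so establishing the equivalence for each fixed $i$ proves the lemma in both directions at once.

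First I would apply (4.4) of Lemma 4.4, which holds with no extra hypothesis, to replace the left-hand side modulo $p_i^{e_i}$. This turns the $i$-th congruence into
$$
\varphi\left(\frac{n}{p_i^{e_i}}\right)\sum_{k\in R(p_i^{e_i})}k^{n-1}\equiv\varphi(n)\pmod{p_i^{e_i}}.
$$
Next I would invoke multiplicativity of the Euler function, writing $\varphi(n)=\varphi(p_i^{e_i})\varphi(n/p_i^{e_i})=p_i^{e_i-1}(p_i-1)\,\varphi(n/p_i^{e_i})$, and transfer this term to the left-hand side.

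The only genuinely computational point is the simplification of the resulting constant term. Because $p_i^{e_i-1}(p_i-1)=p_i^{e_i}-p_i^{e_i-1}$, multiplying by $\varphi(n/p_i^{e_i})$ and reducing modulo $p_i^{e_i}$ annihilates the $p_i^{e_i}$-term, so that $p_i^{e_i-1}(p_i-1)\,\varphi(n/p_i^{e_i})\equiv-p_i^{e_i-1}\varphi(n/p_i^{e_i})\pmod{p_i^{e_i}}$. Substituting this and factoring out $\varphi(n/p_i^{e_i})$ collapses the $i$-th congruence to
$$
\left(\sum_{k\in R(p_i^{e_i})}k^{n-1}+p_i^{e_i-1}\right)\varphi\left(\frac{n}{p_i^{e_i}}\right)\equiv 0\pmod{p_i^{e_i}},
$$
which is exactly (4.8). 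I do not anticipate a serious obstacle: the entire content lies in the bookkeeping of the factor $\varphi(n/p_i^{e_i})$ and in the observation that (4.4) is an \emph{unconditional} identity, so that no assumption such as $p_i-1\mid n-1$ (which is what (4.5) would require) enters at this stage. The one point demanding care is the sign in the reduction $p_i^{e_i-1}(p_i-1)\equiv-p_i^{e_i-1}$ after multiplication by $\varphi(n/p_i^{e_i})$; it is precisely this reduction that produces the $+\,p_i^{e_i-1}$ displayed inside the parenthesis of (4.8).
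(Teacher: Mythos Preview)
Your proposal is correct and follows essentially the same route as the paper: split the defining congruence via the Chinese remainder theorem, apply the unconditional restriction congruence (4.4) of Lemma 4.4 to each prime-power modulus, and then reduce $\varphi(n)=(p_i^{e_i}-p_i^{e_i-1})\varphi(n/p_i^{e_i})\equiv -p_i^{e_i-1}\varphi(n/p_i^{e_i})\pmod{p_i^{e_i}}$ to obtain the stated form. Your explicit remark that (4.4) is unconditional (so that both directions of the biconditional go through without assuming $p_i-1\mid n-1$) is exactly the point the paper relies on implicitly.
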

  \begin{proof}
Clearly, $n=p_1^{e_1}\cdots p_s^{e_s}$ is a weak Carmichael number 
 if and only if (2.2)  is satisfied  modulo $p_i^{e_i}$ for all 
$i=1,\ldots,s$ which is by (4.3) of Lemma 4.3 (4.4) of Lemma 4.4
equivalent to the congruence 
   \begin{equation}
\varphi\left(\frac{n}{p_i^{e_i}}\right)\sum_{k\in R(p_i^{e_i})}k^{n-1}\equiv 
\varphi(n)\pmod{p_i^{e_i}}.
  \end{equation}
Since 
 $
\varphi(n)=(p_i^{e_i}-p_i^{e_i-1})\varphi(n/p_i^{e_i})\equiv  
-p_i^{e_i-1}\varphi(n/p_i^{e_i})(\bmod{\,p_i^{e_i}}),
 $
substituting this into (4.8), we immediately obtain (4.7).
  \end{proof}
 \begin{lemma} 
Let $p\ge 3$ be a a prime and let $e\ge 2$ be a positive integer. Define
 $$
R(p^e)=\{a\in \Bbb N:\, 1\le a\le p^e\quad  and\quad 
\gcd(a, p)=1\}.
  $$ 
Then 
   \begin{equation}
\sum_{k\in R(p^e)}k^{p^{e-1}-1}\equiv\sum_{k=1}^{p^e-1}k^{p^{e-1}-1}\pmod{p^e}.
   \end{equation}
  \end{lemma}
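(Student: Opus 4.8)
The plan is to compute the difference between the two sides directly. The full sum $\sum_{k=1}^{p^e-1} k^{p^{e-1}-1}$ runs over all residues, while $\sum_{k \in R(p^e)} k^{p^{e-1}-1}$ omits precisely those $k$ in $\{1,\ldots,p^e-1\}$ that are divisible by $p$ (note that $p^e\notin R(p^e)$, since $\gcd(p^e,p)=p$). Hence
  $$
\sum_{k=1}^{p^e-1} k^{p^{e-1}-1} - \sum_{k \in R(p^e)} k^{p^{e-1}-1}
= \sum_{1 \le k \le p^e-1\atop p \mid k} k^{p^{e-1}-1},
  $$
and I would show that the right-hand side vanishes modulo $p^e$.

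The multiples of $p$ in the range are exactly $k = pj$ with $1 \le j \le p^{e-1}-1$, so
  $$
\sum_{1 \le k \le p^e-1\atop p \mid k} k^{p^{e-1}-1}
= p^{\,p^{e-1}-1}\sum_{j=1}^{p^{e-1}-1} j^{\,p^{e-1}-1}.
  $$
The second factor is simply an integer, so the whole expression is divisible by $p^{\,p^{e-1}-1}$. It therefore suffices to check that the exponent satisfies $p^{e-1}-1 \ge e$, for then $p^{\,p^{e-1}-1}$ is already divisible by $p^e$ and the difference is $\equiv 0 \pmod{p^e}$, which is exactly the asserted congruence (4.9).

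The only genuine thing to verify is the elementary inequality $p^{e-1}-1 \ge e$ for all odd primes $p \ge 3$ and all $e \ge 2$. Since $p \ge 3$ gives $p^{e-1} \ge 3^{e-1}$, it is enough to check $3^{e-1}-1 \ge e$; this holds for $e=2$ with equality ($3-1=2$) and then for all larger $e$, as $3^{e-1}-1$ grows much faster than $e$. The extremal case $(p,e)=(3,2)$, where $p^{\,p^{e-1}-1}=3^2=p^e$ exactly, confirms the bound is sharp, so no further analysis is needed. Thus there is no real obstacle beyond this inequality; in particular I would \emph{not} try to deduce the vanishing from Lemma 4.1, since here $p-1 \mid p^{e-1}-1$ (because $p \equiv 1 \pmod{p-1}$ forces $p^{e-1}\equiv 1$), so the hypothesis of Lemma 4.1 fails and the cancellation instead comes purely from the power of $p$ that factors out of the inner sum.
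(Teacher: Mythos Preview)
Your proof is correct and follows essentially the same route as the paper: both arguments observe that the difference of the two sums consists of the terms $k^{p^{e-1}-1}$ with $p\mid k$, each of which is divisible by $p^{p^{e-1}-1}$, and then invoke the elementary inequality $p^{e-1}-1\ge e$ (for $p\ge 3$, $e\ge 2$) to conclude these terms vanish modulo $p^e$. Your explicit factorization and your side remark that Lemma~4.1 is inapplicable here (since $p-1\mid p^{e-1}-1$) are both sound and add useful detail, but the core idea matches the paper's proof exactly.
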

 \begin{proof}
From the obvious inequality $p^{e-1}-1\ge e$ with $p\ge 3$ and $e\ge 2$
we see that every term in the sum on the right hand side of (4.9)
that  is divisible by $p$ is also divisible by $p^e$. This yields the desired 
congruence (4.9). 
  \end{proof}

 The following congruence is known as  
a {\it  Carlitz-von Staudt's result} \cite{ca} in 1961 
(for an easier proof see \cite[Theorem 3]{mo3}).
 \begin{lemma} 
Let $p\ge 3$ be a a prime and let $e$ and $l$ be  positive integers
such that $p-1$ does not divide $l$. Then 
   \begin{equation}
\sum_{k\in R(p^e)}k^l \equiv 0\pmod{p^e}.
  \end{equation}
 \end{lemma}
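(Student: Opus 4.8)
The plan is to prove the congruence by the same primitive-root device already used in Lemma 4.1. First I would note that the statement is in fact formally identical to Lemma 4.1: since $\gcd(p^e,p)=p$, the integer $p^e$ is excluded from $R(p^e)$, so
$$
\sum_{k\in R(p^e)}k^l=\sum_{1\le k\le p^e-1\atop \gcd(k,p)=1}k^l ,
$$
and moreover $R(p^e)$ is exactly a complete set of representatives for the unit group $(\Bbb Z/p^e\Bbb Z)^{*}$. Fix a primitive root $g$ modulo $p^e$, which exists because $p$ is an odd prime. Multiplication by the unit $g$ permutes these representatives, so reindexing gives
$$
g^l\sum_{k\in R(p^e)}k^l\equiv\sum_{k\in R(p^e)}(gk)^l\equiv\sum_{k\in R(p^e)}k^l\pmod{p^e},
$$
that is, $(g^l-1)\sum_{k\in R(p^e)}k^l\equiv 0\pmod{p^e}$.

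The key step is then to show that $g^l-1$ is a \emph{unit} modulo $p^e$, since it is precisely the noninvertibility of this factor that could obstruct the cancellation. Here I would invoke the standard fact that a primitive root modulo $p^e$ remains a primitive root modulo $p$, so the order of $g$ modulo $p$ is exactly $p-1$. Hence $g^l\equiv 1\pmod p$ if and only if $(p-1)\mid l$; as $p-1\nmid l$ by hypothesis, we get $g^l\not\equiv 1\pmod p$, whence $p\nmid(g^l-1)$ and $g^l-1\in(\Bbb Z/p^e\Bbb Z)^{*}$. Cancelling this unit yields $\sum_{k\in R(p^e)}k^l\equiv 0\pmod{p^e}$. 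The point I expect to require the most care is exactly this invertibility: the hypothesis supplies only $p-1\nmid l$, not the stronger $\varphi(p^e)=p^{e-1}(p-1)\nmid l$, so one must test for $1$ modulo $p$ rather than modulo $p^e$, using the reduction of the primitive root.

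As an alternative more in keeping with the surrounding Bernoulli-number material (and matching the ``Carlitz--von Staudt'' label), I would instead separate the multiples of $p$, writing $\sum_{k\in R(p^e)}k^l=S_l(p^e)-p^lS_l(p^{e-1})$, and then expand each power sum by Bernoulli's formula (1.2), analyzing the $p$-adic valuation of the individual terms $\tfrac{1}{l+1}\binom{l+1}{i}n^{l+1-i}B_i$. The hypothesis $p-1\nmid l$ would enter through von Staudt--Clausen's theorem (Theorem 1.8), which forces $B_l$ to be $p$-integral, so that the lowest term $p^eB_l$ of $S_l(p^e)$ already vanishes modulo $p^e$. The obstacle on this route is the bookkeeping of the denominators of the intermediate $B_i$, which is exactly why I would favor the primitive-root argument: it sidesteps Bernoulli numbers entirely and uses only the cyclicity of $(\Bbb Z/p^e\Bbb Z)^{*}$.
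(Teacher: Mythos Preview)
Your proof is correct. You rightly observe that the statement coincides with Lemma~4.1, and your primitive-root argument is the clean way to handle it; in fact, your treatment of the invertibility of $g^l-1$ is more careful than the paper's own proof of Lemma~4.1, which only records $a^m\not\equiv 1\pmod{p^e}$ and does not explicitly check that $a^m-1$ is a \emph{unit} modulo $p^e$. Your reduction to $g^l\not\equiv 1\pmod p$ via the fact that a primitive root modulo $p^e$ reduces to a primitive root modulo $p$ is exactly the point that makes the cancellation legitimate.

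The paper, however, proves Lemma~4.7 by a genuinely different route: it quotes an external result of Duparc and Peremans (1955) asserting that $\sum_{k\in R(p^e)}k^{-r}\equiv 0\pmod{p^e}$ whenever $p-1\nmid r$, and then sets $r=\varphi(p^e)-l$ and applies Euler's theorem to convert $k^{-r}$ into $k^l$. Your approach is self-contained and uses nothing beyond the cyclicity of $(\Bbb Z/p^e\Bbb Z)^{*}$, whereas the paper's version outsources the work to a cited theorem and then performs an exponent shift; the paper's route also tacitly needs $l<\varphi(p^e)$ (or an obvious adjustment) to keep $r$ positive. Your alternative Bernoulli--von Staudt sketch is in the right spirit but, as you note, the bookkeeping of intermediate denominators makes it less attractive than the one-line group-theoretic argument you actually gave.
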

  \begin{proof}
By a particular case of a result obtained in 1955 by H.J.A. Duparc and 
W. Peremans \cite[Theorem 1]{dp} (cf. \cite[Corollary 2]{sl1} or 
\cite[the congruence (58) in Section 8]{me3}), 
if $r$ is a positive integer such that $p-1$ does not divide $r$,
then
    \begin{equation}
\sum_{k\in R(p^e)}\frac{1}{k^r}\equiv 0\pmod{p^e}.
  \end{equation}
Letting $r=\varphi(p^e)-l=p^e-p^{e-1}-l$ in (4.11) and then applying 
Euler totient theorem modulo $p^e$, we immediately obtain (4.10).
    \end{proof}
  \begin{lemma}{\rm(\cite{ca}, \cite[Theorem 3]{mo3})} 
Let $l$ and $m\ge 2$ be positive integers. Then
   \begin{equation}
S_l(m):=\sum_{i=1}^{m-1}i^l\equiv\left\{
  \begin{array}{ll}
0  \pmod{\frac{(m-1)m}{2}}&  if\,\, l\,\, is\,\, odd\\
-\sum_{(p-1)\mid l, p\mid m}\frac{m}{p}  \pmod{m} & 
if\,\, l\,\, is\,\, even
\end{array}\right.\end{equation}
where the summation is taken over all primes $p$ such that 
$(p-1)\mid l$ and $p\mid m$.
  \end{lemma}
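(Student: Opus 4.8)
The plan is to split by the parity of $l$, using a pairing argument when $l$ is odd and a local (prime-power) analysis when $l$ is even.

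First, suppose $l$ is odd. I would exploit that $x+y$ divides $x^l+y^l$ for odd $l$ in two ways. Reducing the summands modulo $m$ and pairing $i$ with $m-i$ shows $m\mid i^l+(m-i)^l$ for each $i$, hence $m\mid 2S_l(m)$. Reducing instead modulo $m-1$, the residues $1,2,\dots,m-1$ are $1,\dots,m-2,0$, so $S_l(m)\equiv\sum_{i=1}^{m-2}i^l\pmod{m-1}$; pairing $i$ with $m-1-i$ and using $(m-1)\mid i^l+(m-1-i)^l$ gives $(m-1)\mid 2S_l(m)$. Since $\gcd(m,m-1)=1$ we obtain $m(m-1)\mid 2S_l(m)$, and as $m(m-1)$ is even this is exactly $\tfrac{m(m-1)}{2}\mid S_l(m)$.

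Now suppose $l$ is even. The plan is to prove the congruence modulo each prime power $q^a$ exactly dividing $m$ and to reassemble by the Chinese remainder theorem. Writing $m=q^a m'$ with $\gcd(q,m')=1$ and observing that $1,2,\dots,m$ hit every residue class modulo $q^a$ exactly $m'$ times (while $q^a\mid m^l$), I get $S_l(m)\equiv m' S_l(q^a)\pmod{q^a}$. On the target side, every term $m/p$ of $-\sum_{(p-1)\mid l,\ p\mid m}m/p$ with $p\neq q$ is divisible by $q^a$, so modulo $q^a$ only $-m/q=-m'q^{a-1}$ survives, and only when $(q-1)\mid l$. As $m'$ is a unit modulo $q^a$, the whole even case reduces to the single local claim that $S_l(q^a)\equiv -q^{a-1}\pmod{q^a}$ if $(q-1)\mid l$ and $S_l(q^a)\equiv 0\pmod{q^a}$ otherwise. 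To prove this I would induct on $a$ via $S_l(q^a)=\sum_{k\in R(q^a)}k^l+q^l S_l(q^{a-1})$: the inductive hypothesis forces $q^l S_l(q^{a-1})$ to vanish modulo $q^a$ in both cases (using $l\geq 1$, and $l\geq q-1\geq 2$ in the exceptional case), and when $(q-1)\nmid l$ the coprime sum $\sum_{k\in R(q^a)}k^l$ vanishes modulo $q^a$ by Lemma 4.7.

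The hard part will be the exceptional coprime sum $\sum_{k\in R(q^a)}k^l\pmod{q^a}$ when $(q-1)\mid l$, which Lemma 4.7 does not address and which must equal $-q^{a-1}$. I would evaluate it through a primitive root $g$ modulo $q^a$, turning the sum into the geometric progression $\sum_{j=0}^{\varphi(q^a)-1}(g^l)^j$; since $(q-1)\mid l$, the ratio $h=g^l$ lies in the Sylow $q$-subgroup, so $h\equiv 1\pmod q$ and one cannot simply invert $h-1$. The key computation compares the $q$-adic valuations of $h-1$ and of $h^{e}-1$ (where $e$ is the order of $h$) by lifting the exponent, which pins the progression to $-q^{a-1}$ modulo $q^a$; this valuation bookkeeping is the main obstacle. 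As alternatives I would either invoke the exceptional case of the reciprocal-power-sum theorems of Duparc--Peremans and Slavutskii already used for Lemma 4.7, or deduce the even case from Bernoulli's formula (1.2) combined with von Staudt--Clausen's theorem (Theorem 1.8), at the price of controlling the Bernoulli denominators against the powers of $m$. Finally, the prime $q=2$, where $(q-1)\mid l$ holds automatically and the unit group fails to be cyclic for $a\geq 3$, I would settle by a direct verification of the same local claim.
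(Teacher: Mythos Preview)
The paper does not supply its own proof of this lemma: it is quoted from Carlitz and Moree as a known result, so there is nothing in the text to compare your argument against. Your outline is essentially sound as an independent proof. The odd case is handled correctly by the two pairings, and your localisation $S_l(m)\equiv m'S_l(q^a)\pmod{q^a}$ together with the inductive splitting $S_l(q^a)=\sum_{k\in R(q^a)}k^l+q^lS_l(q^{a-1})$ reduces the even case exactly to the coprime sum you isolate.

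Two remarks on the step you flag as hard. First, the LTE computation you sketch does pin down the unit, not merely the valuation: with $\tilde h=1+wq^t$ ($q\nmid w$, $t\ge 1$) and $N=\varphi(q^a)$, one has $\tilde h^N-1\equiv Nwq^t\pmod{q^{t+a}}$, so $\sum_{j=0}^{N-1}\tilde h^j=(\tilde h^N-1)/(\tilde h-1)\equiv N\equiv -q^{a-1}\pmod{q^a}$ on the nose. Second, the case $q=2$ does not in fact require a separate verification: in your lifting $\sum_{s=0}^{q-1}(r+sq^{a-1})^l\equiv qr^l\pmod{q^a}$ (which gives $\sum_{k\in R(q^a)}k^l\equiv q\sum_{r\in R(q^{a-1})}r^l$ and hence the induction), the cross term $lr^{l-1}q^{a-1}\sum_s s$ vanishes modulo $q^a$ for $q=2$ precisely because $l$ is even. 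So the same induction covers all primes uniformly.
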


  \begin{proof}[Proof of Theorem $2.4$] 
First assume that  $n=p_1^{e_1}p_2^{e_2}\cdots p_s^{e_s}$ is a 
composite  integer such that $p_i-1\mid n-1$ for all 
$i=1,2,\ldots, s$. Then by Lemma 4.5 $n$ is a weak Carmichael number if  
 for every $i\in\{1,2,\ldots ,s\}$ there holds
   \begin{equation}
\sum_{k\in R(p_i^{e_i})}k^{n-1}+p_i^{e_i-1}\equiv 0\pmod{p_i^{e_i}}.
  \end{equation}
For any fixed $i\in\{1,2,\ldots, s\}$
consider two cases: $e_i=1$ and $e_i\ge 2$.

{\it Case} 1. $e_i=1$. Then since $n-1\equiv 0 (\bmod{\,(p-1)})$,
using Fermat little theorem, we obtain
   \begin{equation}
\sum_{k\in R(p_i)}k^{n-1}+p_i^{1-1}=
\sum_{k=1}^{p_i-1}k^{n-1}+1\equiv (p_i-1)+1\equiv 0\pmod{p_i}.
  \end{equation}
Therefore, (4.13) is satisfied. 

{\it Case} 2. $e_i\ge 2$.
Then using (4.3) of Lemma 4.3 and (4.10) of Lemma 4.7,
with the notations of Lemma 4.4, for any fixed $i\in\{1,2,\ldots, s\}$ we have 
   \begin{equation}\begin{split}
\sum_{k\in R(p_i^{e_i})}k^{n-1}&\equiv \sum_{k\in R(p_i^{e_i})}
k^{p_i^{e_i-1}-1}\pmod{p_i^{e_i}}\\
&\equiv \sum_{k=1}^{p_i^{e_i}-1}k^{p_i^{e_i-1}-1}\pmod{p_i^{e_i}}.
  \end{split}\end{equation}
Furthermore, by the second congruence of (4.12) of 
Lemma 4.8 (cf. \cite[Lemma 1]{gm3}), we immediately get 
   \begin{equation}\begin{split}
 \sum_{k=1}^{p_i^{e_i}-1}k^{p_i^{e_i-1}-1}\equiv -\frac{p_i^{e_i}}{p_i}
=-p_i^{e_i-1}\pmod{p_i^{e_i}}.
  \end{split}\end{equation}
Comparing (4.15) and (4.16) immediately gives (4.13), and therefore,
$n$ is a weak Carmichael number.

Conversely, suppose that $n$ is a weak Carmichael number  with 
a prime factorization $n=p_1^{e_1}\cdots p_s^{e_s}$ 
where $p_1,\ldots,p_s$ are odd primes. Suppose that 
for some  $i\in\{1,2,\ldots,s\}$,  $n-1$ is not divisible 
by $p_i-1$. Then by (4.10) of Lemma 4.7, we have
     \begin{equation}
\sum_{k\in R(p_i^{e_i})}k^{n-1}\equiv 0\pmod{p^e}.
  \end{equation}
Substituting (4.17) in (4.7) of Lemma 4.5,   we get
  \begin{equation}
p_i^{e_i-1}\varphi
\left(\frac{n}{p_i^{e_i}}\right)\equiv 0\pmod{p_i^{e_i}},
  \end{equation}
or equivalently,
   \begin{equation}
\varphi\left(\frac{n}{p_i^{e_i}}\right)\equiv 0\pmod{p_i}.
  \end{equation}
The above congruence implies that 
$p_i\mid \prod_{1\le j\le s\atop j\not= i}p_j^{e_j-1}(p_j-1)$.
It follows that $p_i\mid p_j-1$ for some $j\not= i$. 

  We can choose such a $p_i$ to be maximal, i.e.,
     $$
p_i=\max_{1\le t\le s}\{p_t:\, n-1\not\equiv 0\pmod{p_t-1}\}.
    $$
Then, as it is proved previously, we must have 
$p_i\mid p_j-1$ for some $j\not= i$. It follows that $p_i<p_j$, 
and hence, by the maximality of $p_i$ we conclude that 
$p_j-1\mid n-1$. Therefore, $p_i\mid n-1$, which is impossible 
because of $p_i\mid n$. A contradiction, and hence $n-1$ is
divisible by $p_i-1$ for each $i=1,2,\ldots,s$. This completes the proof
of Theorem 2.4.
  \end{proof}

  \begin{proof}[Proof of Corollary $2.17$] 
If $n$ is a weak Carmichael number that is not Carmichael number, 
then $\mu(n)=0$, and hence the congrueence in Corollary 2.17 reduced to the 
congruence (2.2).

Conversely, if $n>1$ satisfies the congruence 
  \begin{equation}
\sum_{\gcd(k,n)=1\atop 1\le k\le n-1}k^{n-1}\equiv \varphi(n)+\mu(n) \pmod{n},
  \end{equation}
then consider two cases:
1) $n$ is not  square-free and 2) $n$ is  square-free.
In the first case (4.20) becomes
   \begin{equation}
\sum_{\gcd(k,n)=1\atop 1\le k\le n-1}k^{n-1}\equiv \varphi(n)\pmod{n},
  \end{equation}
whence using  Theorem 2.4 we conclude that $n$ is a weak 
Carmichael number. 

In the second case, we have $\mu(n)=\pm 1$, and then (4.20) becomes
     \begin{equation}
\sum_{\gcd(k,n)=1\atop 1\le k\le n-1}k^{n-1}\equiv \varphi(n)\pm 1\pmod{n}.
  \end{equation}

In view of Corollary 2.16, the above congruence shows that $n$ is not 
a weak Carmichael number.
Then by Theorem 2.4 there exists 
a prime factor $p$ of $n$ such that $p-1$ does not divide $n-1$.
Asuume that $n=p^en'$ with $n'$ such that $p$ does not divide $n$. 
Then applying (4.4) and Fermat little theorem,  we find that 
     \begin{equation}\begin{split}
\sum_{\gcd(k,n)=1\atop 1\le k\le n-1}k^{n-1}
&\equiv \varphi\left(\frac{n}{p^e}\right)\sum_{k\in R(p^{e})}k^{n-1}\pmod{p}\\
&\equiv \varphi\left(\frac{n}{p^e}\right)|R(p^{e})|
=\varphi\left(\frac{n}{p^e}\right)\varphi(p^e)=\varphi(n)\pmod{p}.
       \end{split}\end{equation}
Substituting the congruence (4.23) into (4.22) reduced modulo $p$,
we obtain $0\equiv \pm 1 (\bmod{\,p})$. A contradiction, and hence, 
$n$ is a weak Carmichael number which is not a Carmichael number. 
This concludes the proof.
 \end{proof}

     \end{document}